\newcommand{\s}{\boldsymbol{\sigma} \cdot\nabla}
\newcommand{\ri}{_{\rho}^{(j)}}
\newcommand\norm[1]{\left\lVert#1\right\rVert}
\newcommand{\ud}{\mathrm{d}}
\newcommand{\D}{\mathcal{D}}
\newcommand{\Dirac}{\slashed{\mathcal{D}}}
\newcommand{\C}{\mathbb C}
\newcommand{\N}{\mathbb N}
\newcommand{\R}{\mathbb R}
\newcommand\restr[2]{{
  \left.\kern-\nulldelimiterspace 
  #1 
  \littletaller 
  \right|_{#2} 
  }}
\newcommand{\littletaller}{\mathchoice{\vphantom{\big|}}{}{}{}}
\theoremstyle{plain}
\newtheorem{theorem}{Theorem}[section]
\newtheorem{lemma}[theorem]{Lemma}
\newtheorem{proposition}[theorem]{Proposition}
\theoremstyle{definition}
\newtheorem{definition}[theorem]{Definition}
\newtheoremstyle{iremark}
	{6pt}
	{4pt}
	{\upshape}
	{0pt}
	{\itshape}
	{.}
	{5pt plus 1pt minus 1 pt}
	{\thmname{#1} \thmnumber{\itshape#2}\thmnote{(#3)}}
\theoremstyle{iremark}
\newtheorem{remark}[theorem]{Remark}
\newtheorem*{remark*}{Remark}
\numberwithin{equation}{section}
\newcommand{\mig}[1]{{\color{blue}{#1}}}
\begin{document}
\title[Dirac operators on unbounded domains with infinite corners]{Dirac operators with infinite mass boundary conditions\\ on unbounded domains with infinite corners}
\author[M.~Camarasa]{Miguel Camarasa}
\address[M.~Camarasa]{BCAM - Basque Center for Applied Mathematics. Alameda de Mazarredo 14, 48009 Bilbao (Spain).}
\email{mcamarasa@bcamath.org}

\author[F.~Pizzichillo]{Fabio Pizzichillo}
\address[F.~Pizzichillo]{Departamento de Matemática e Informática Aplicadas a la Ingeniería Civil y Naval,
Universidad Politécnica de Madrid\\
E.~T.~S.~I.~de Caminos, Canales y Puertos,
Calle del Profesor Aranguren 3,
Madrid, 28040 (Spain).}
\email{fabio.pizzichillo@upm.es}


\begin{abstract}
We investigate the self-adjointness of the two dimensional Dirac operator with \emph{infinite mass} boundary conditions on an unbounded domain with an infinite number of corners. 
We prove that if the domain has no concave corners, then the operator is self-adjoint. On the other hand, when concave corners are present, the operator is no longer self-adjoint and self-adjoint extensions can be constructed. 
Among these, we characterize the distinguished extension as the unique one whose domain is included in the Sobolev space $H^{s}$, where $s>1/2$ depends on the amplitude of the corners. Lastly, we study the spectrum of this distinguished self-adjoint extension. 

\end{abstract}

\date{\today}
\subjclass[2020]{Primary: 81Q10; Secondary: 47N20,  47N50, 47B25.}
\keywords{Dirac operator, infinite mass, boundary conditions, unbounded domain, infinite corners, self-adjoint operator, spectral properties.}

\maketitle

\vspace{-1cm} 


\section{Introduction}

In this paper we study the two dimensional Dirac operator with \emph{infinite mass} boundary conditions on an unbounded polygon with an infinite number of corners. Specifically, our focus lies in investigating both the self-adjointness and spectral properties. 
While the problem on bounded polygons with finite number of corners is well  understood, a comprehensive description in the context of unbounded sets with infinitely many corners is missing. This is the aim of this work. 

The free Dirac operator was introduced by Paul Dirac in \cite{PD} and it is the Hamiltonian that describes the behaviour of $1/2$-spin particles, such as electrons, in the whole space within the framework of relativistic quantum mechanics.
Mathematically, for $m\geq 0$, that denotes the mass of the particle, the free Dirac operator in $\mathbb{R}^2$ is defined by the differential expression
\[
\Dirac_m:=-i\s +m\sigma_3:=-i(\sigma_1 \partial_{x_1}+\sigma_2\partial_{x_2})+m\sigma_3,
\]
where $\boldsymbol{\sigma} =(\sigma_1,\sigma_2)$ and $\sigma_3$ are the Pauli matrices
\[
\sigma_1=\begin{pmatrix}
0 & 1 \\
1 & 0 
\end{pmatrix},\hspace{0.3cm}
\sigma_2=\begin{pmatrix}
0 & -i \\
i & 0 
\end{pmatrix},\hspace{0.3cm}
\sigma_3=\begin{pmatrix}
1 & 0 \\
0 & -1 
\end{pmatrix}.
\]

In quantum mechanics, according to Stone's theorem, observable objects are described by self-adjoint operators. The free Dirac operator on $\mathbb{R}^2$ is self-adjoint on the Sobolev space $H^1(\mathbb{R}^2;\mathbb{C}^2)$ and its spectrum is purely essential and equals $(-\infty,-m]\cup [m,+\infty)$. For more details, see the monograph \cite{Thaller92} for the equivalent three-dimensional case. 

In many materials, such as topological insulators, d-wave superconductors or graphene, the two-dimensional Dirac operator is the effective Hamiltonian describing the properties of low energy electrons in such a structure, see for example \cite{2017arXiv170509187B,1367-2630-11-9-095020,PhysRevLett.112.257401}. 
Due to its significance in describing electron properties, exploring this operator's behavior on a domain $\Omega\subsetneq\mathbb{R}^2$ becomes an interesting subject.
Nevertheless, when the Dirac operator is defined on a domain $\Omega\subsetneq\mathbb{R}^2$, the questions about its self-adjointness and its spectrum become more delicate  and some boundary conditions must be considered. 
For instance, the introduction of \textit{quantum-dot} boundary conditions arises by cutting a graphene sheet along the boundary $\partial \Omega$, see \cite{PhysRevB77085423}. 
A specific example of these condition, the \emph{infinite mass} boundary conditions, was first introduced in \cite{1987MIT}. 
The name originates from the fact that this operator can be represented as the limit of a Dirac operator in the whole space, with the mass term outside $\Omega$ tending to infinity; see \cite{StoVug19} and the references therein. 

Mathematically, denoting by $\textbf{n}$ the outward unit normal vector to $\partial\Omega$, the Dirac operator on $\Omega$ with infinite mass boundary conditions is defined as
\[
\mathcal{D}(D):=\{\psi\in H^1\left(\Omega;\C^2\right): \ -i \sigma_3 \cdot \boldsymbol{\sigma} \cdot \textbf{n} \cdot \psi= \psi \text{ on } \partial \Omega\},\quad
D\psi=\Dirac_m \psi.
\]

When $\Omega$ is bounded, the problem of self-adjointness is well understood and it relies on the regularity of the boundary of the domain. 
If $\partial\Omega$ is $C^2$-regular, \cite{BenFouStoVDB17} establishes that $D$ is self-adjoint. 
Yet, relaxing boundary regularity adds complexities to the problem. 

Indeed, if $\Omega$ is a sector, the self-adjointness depends on the opening of the angle $\omega$ at the corner: when $\omega\in (0,\pi)$ the operator is self-adjoint, and when $\omega\in (\pi,2\pi)$ the operator admits infinitely many self-adjoint extensions, see \cite{LeTeOurBon18} for more details.
Similar analyses are presented in \cite{CL20}, exploring the operator with a vertex flip, and in \cite{CGP22}, considering an additional Coulomb-type interaction centered at the vertex. 
The techniques used in these papers depend strongly on the radial symmetry of the domain. 
In \cite{PizzVDB21}, these results are extended to the case of a \emph{curvilinear polygon}, that is, when $\partial\Omega$ is piecewise $C^2$-regular and has a finite number of corners. 
Recently, the self-adjointness of the Dirac operator on a three-dimensional cone has also been studied in \cite{cassano2023selfadjointness}.
Notice that, besides the mathematical interest of reducing the regularity assumptions, considering hypersurfaces with corners finds additional numerical justifications. Simultaneously, experimental inquiries  explore roughness effects on graphene flake properties, unveiling novel quantum phases, see \cite{Chen_2018}.

A similar outcome occurs with $\delta$-shell boundary conditions, a transmission model between domains in both two and three dimensions. Through confinement, these operators can produce Dirac operators with quantum-dot boundary conditions, specifically the Dirac operator with infinite boundary conditions (see \cite[Remark 4.2]{BeHoOuPan}). Notable references include \cite{DitExnSeb89,AMV14,AMV15,AMV16,B22,B22v2,CLM23,benhellal2023selfadjointness} and the surveys \cite{BHSS22,OBP21}. Another approach is explored in \cite{FL23}, which examines the self-adjointness of the operator on a star graph.

The literature on unbounded sets is relatively limited. References such as \cite{Rabinovich21, Rabinovich22, 2023unbounded, frymark2023spectral} discuss the $\delta$-shell on unbounded  curves, while \cite{BorBrKrOB22} explores the infinite mass boundary condition on a tubular neighborhood of an unbounded planar curve.
Despite the similarity in self-adjointness results to the bounded case,
the unboundedness of the domain affects the spectrum and the \emph{essential gap}.

However, the case of an unbounded domain with an infinite number of corners remains poorly understood. We aim to extend previous results to this general class of domains. Recently, there has been growing interest in problems involving unbounded domains with infinitely many corners. A related investigation is presented in \cite{Richard_2022}, where the authors study a discrete Schrödinger operator on a topological crystal with an infinite number of edges. Although the direct treatment of infinite corners is not addressed, \cite{BEHRNDT20181808} examines Hamiltonians with supported interactions at an infinite number of points on the real line. Additionally, \cite{eigenvaluesnonbounded} explores Schrödinger operators with Dirichlet, Neumann, and Robin boundary conditions on unbounded Lipschitz sets. Furthermore, \cite{euler} investigates the 2D Euler equations in a bounded set with both finite and infinitely many corners. In the case of the Dirac operator, our study reveals that the domain's geometry significantly impacts its self-adjointness and spectral properties.

Before stating our results, we need some definitions.
\begin{definition}\label{def:infinite-polygon}
Given $V:=\{v^{(j)}=(v_1^{(j)},v_2^{(j)})\}_{j\in\mathbb{Z}}\subset\R^2$, we say that $V$ is an \emph{infinite set of non-degenerate corners} if for any $j,k\in \mathbb{Z}$
\begin{enumerate}[label=\emph{(\roman*)}]
\item $v_1^{(j)}\leq v_1^{(j+1)}$,
\item the vectors $(v^{(j)}-v^{(j-1)})$ and $(v^{(j+1)}-v^{(j)})$ are linearly independent in $\R^2$,
\item there exists $\rho>0$ such that $|v^{(j)}-v^{(k)}|>3\rho$.

\end{enumerate}
Given $V$ an infinite set of non-degenerate corners, we define a \emph{non-degenerate infinite polygon} as
\begin{equation}\label{definSv}
\Omega_V:=\left\{(x_1,x_2)\in \mathbb{R}^2: \ \exists j\in\mathbb{Z}: \ x_1\in [v_1^{(j)},v_1^{(j+1)}] \text{ and } x_2>
\frac{v_2^{(j+1)}-v_2^{(j)}}{v_1^{(j+1)}-v_1^{(j)}}\cdot(x_1-v_1^{(j)})+v_2^{(j)}\right\}.
\end{equation}
For any $j\in\mathbb{Z}$, we say that the corner $v^{(j)}$ is \emph{convex} if $\Omega_V\cap B_{\rho}(v^{(j)})$ is a convex set. We say that $v^{(j)}$ is \emph{concave} otherwise.

We define $\omega_j$ as the angle at the corner $v^{(j)}$ as
\begin{equation}\label{eq:def-omega_j}
\omega_j:=
\begin{cases}
\arccos\left(
\displaystyle \frac{v^{(j+1)}-v^{(j)}}{|v^{(j+1)}-v^{(j)}|}
\cdot
\displaystyle \frac{v^{(j-1)}-v^{(j)}}{|v^{(j-1)}-v^{(j)}|}
\right)
& \text{if}\ v^{(j)}\ \text{is convex},
\\
\\
2\pi-\arccos\left(
\displaystyle \frac{v^{(j+1)}-v^{(j)}}{|v^{(j+1)}-v^{(j)}|}
\cdot
\displaystyle \frac{v^{(j-1)}-v^{(j)}}{|v^{(j-1)}-v^{(j)}|}
\right)&
 \text{if}\ v^{(j)}\ \text{is concave},
\end{cases}
\end{equation}
where ``$\cdot$'' and ``$|\cdot|$'' denote the euclidean scalar product and the euclidean norm in $\R^2$ respectively. 
Notice that if $v^{(j)}$ is convex, then $0<\omega_j<\pi$; if $v^{(j)}$ is concave, then $\pi<\omega_j<2\pi$. 
Finally we set 
\[
\operatorname{Conc}(V):=\{j\in\mathbb{Z}:v^{(j)}\ \text{is concave}\}.
\]
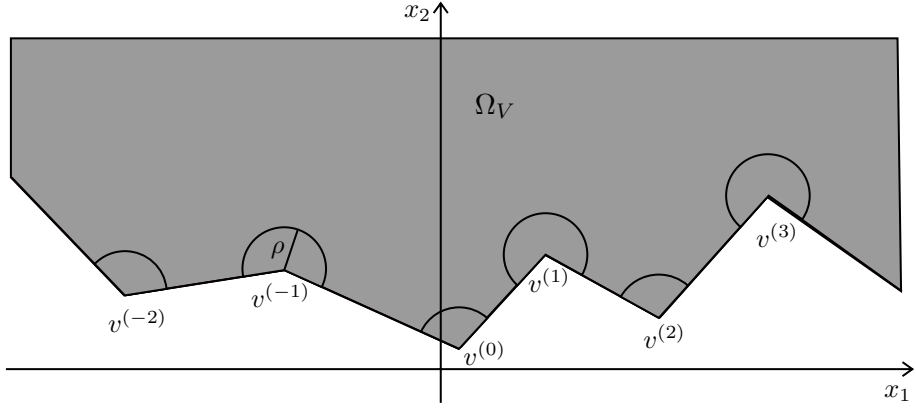
\begin{figure}[ht!!!]\label{infpolygon}
\centering

\tikzset{every picture/.style={line width=0.75pt}} 

\begin{tikzpicture}[x=0.53pt,y=0.53pt,yscale=-1,xscale=1]

\draw  [draw opacity=0][fill={rgb, 255:red, 155; green, 155; blue, 155 }  ,fill opacity=0.38 ] (643.5,36) -- (646,216) -- (551.5,149) -- (475,235) -- (394.5,190) -- (333.5,257) -- (210.5,201) -- (97.5,219) -- (17.5,135) -- (17.5,36) -- cycle ;
\draw [fill={rgb, 255:red, 208; green, 2; blue, 27 }  ,fill opacity=1 ]   (394.5,190) -- (475.5,235) ;
\draw [fill={rgb, 255:red, 208; green, 2; blue, 27 }  ,fill opacity=1 ]   (335,256) -- (395,190) ;
\draw [fill={rgb, 255:red, 208; green, 2; blue, 27 }  ,fill opacity=1 ]   (475.5,235) -- (552.5,148) ;
\draw [fill={rgb, 255:red, 208; green, 2; blue, 27 }  ,fill opacity=1 ]   (552.5,148) -- (646.5,216) ;
\draw [fill={rgb, 255:red, 208; green, 2; blue, 27 }  ,fill opacity=1 ]   (211,201) -- (98,219) ;
\draw [fill={rgb, 255:red, 208; green, 2; blue, 27 }  ,fill opacity=1 ]   (18,135) -- (98,219) ;
\draw [fill={rgb, 255:red, 208; green, 2; blue, 27 }  ,fill opacity=1 ]   (334,257) -- (211,201) ;

\draw  [fill={rgb, 255:red, 155; green, 155; blue, 155 }  ,fill opacity=1 ] (98,187.5) .. controls (113.39,187.5) and (126.03,199.28) .. (127.38,214.32) -- (98,219) -- (76.68,196.61) .. controls (82.05,191) and (89.62,187.5) .. (98,187.5) -- cycle ;
\draw  [fill={rgb, 255:red, 155; green, 155; blue, 155 }  ,fill opacity=1 ] (210.5,170.5) .. controls (226.79,170.5) and (240,183.71) .. (240,200) .. controls (240,204.68) and (238.91,209.11) .. (236.96,213.05) -- (210.5,201) -- (181.53,205.61) .. controls (181.18,203.8) and (181,201.92) .. (181,200) .. controls (181,183.71) and (194.21,170.5) .. (210.5,170.5) -- cycle ;
\draw  [fill={rgb, 255:red, 155; green, 155; blue, 155 }  ,fill opacity=1 ] (334,227.5) .. controls (341.65,227.5) and (348.62,230.41) .. (353.86,235.19) -- (334,257) -- (307.15,244.77) .. controls (311.79,234.58) and (322.07,227.5) .. (334,227.5) -- cycle ;
\draw  [fill={rgb, 255:red, 155; green, 155; blue, 155 }  ,fill opacity=1 ] (395,160.5) .. controls (411.29,160.5) and (424.5,173.71) .. (424.5,190) .. controls (424.5,195.23) and (423.14,200.14) .. (420.75,204.4) -- (395,190) -- (375.14,211.81) .. controls (369.22,206.42) and (365.5,198.64) .. (365.5,190) .. controls (365.5,173.71) and (378.71,160.5) .. (395,160.5) -- cycle ;
\draw  [fill={rgb, 255:red, 155; green, 155; blue, 155 }  ,fill opacity=1 ] (552,118.5) .. controls (568.29,118.5) and (581.5,131.71) .. (581.5,148) .. controls (581.5,154.25) and (579.55,160.05) .. (576.23,164.83) -- (552.5,148) -- (532.67,170.29) .. controls (526.44,164.88) and (522.5,156.9) .. (522.5,148) .. controls (522.5,131.71) and (535.71,118.5) .. (552,118.5) -- cycle ;
\draw  [fill={rgb, 255:red, 155; green, 155; blue, 155 }  ,fill opacity=1 ] (475,204.19) .. controls (482.86,204.19) and (490.03,207.14) .. (495.48,211.98) -- (475,235) -- (448.1,219.97) .. controls (453.38,210.55) and (463.45,204.19) .. (475,204.19) -- cycle ;
\draw    (220,172.5) -- (210.5,201) ;
\draw  (14,271.49) -- (654,271.49)(321,11) -- (321,295.5) (647,266.49) -- (654,271.49) -- (647,276.49) (316,18) -- (321,11) -- (326,18)  ;

\draw (335,249.4) node [anchor=north west][inner sep=0.75pt]    {$v^{( 0)}$};
\draw (380.58,197.71) node [anchor=north west][inner sep=0.75pt]    {$v^{( 1)}$};
\draw (541.55,165.68) node [anchor=north west][inner sep=0.75pt]    {$v^{( 3)}$};
\draw (185,208.39) node [anchor=north west][inner sep=0.75pt]    {$v^{( -1)}$};
\draw (84,226.4) node [anchor=north west][inner sep=0.75pt]    {$v^{( -2)}$};
\draw (462,237.4) node [anchor=north west][inner sep=0.75pt]    {$v^{( 2)}$};
\draw (199,178.4) node [anchor=north west][inner sep=0.75pt]    {$\rho $};
\draw (293,10.4) node [anchor=north west][inner sep=0.75pt]    {$x_{2}$};
\draw (632,281.4) node [anchor=north west][inner sep=0.75pt]    {$x_{1}$};
\draw (343,73.4) node [anchor=north west][inner sep=0.75pt]  [font=\large]  {$\Omega _{V}$};

\end{tikzpicture}
\caption{An example of non-degenerate infinite polygon $\Omega_V$.} 
\end{figure}

\end{definition}
Given $\Omega_V$ a non-degenerate infinite polygon as in \Cref{def:infinite-polygon}, we are interested in the study of the infinite mass Dirac operator acting on $\Omega_V$ defined as
\begin{equation}\label{defdomainDV}
\mathcal{D}(D_V):=\{\psi\in H^1\left(\Omega_V;\C^2\right): \ -i \sigma_3 \, \boldsymbol{\sigma} \cdot \textbf{n} \, \psi= \psi \text{ on } \partial \Omega_V\},\quad
D_V \psi=\Dirac_m \psi.
\end{equation}
Defining a boundary trace from $H^1(\Omega)$ to $H^{1/2}(\partial\Omega)$ in the general case of an unbounded domain $\Omega$ remains unclear. 
However, this property still holds locally. Thus the boundary conditions in \eqref{defdomainDV} have to be meant locally.
Moreover, we prove in \Cref{prop:sigma.nabla=nabla}, that for functions in $\mathcal{D}{(D_V})$ a boundary trace can be defined in $L^2(\partial\Omega_V;\C^2)$.

While a similar analysis can be conducted using Dirac operators with quantum-dot boundary conditions, as described in \cite{PizzVDB21},
we opt for infinite mass boundary conditions because they offer a more direct implementation.

\subsection{Main results}
We are interested in the study of the self-adjointness of the infinite mass Dirac operator on non-degenerate infinite polygons.
The operator $D_V$ is symmetric, as explained in \Cref{prop:sigma.nabla=nabla}.
Before stating the first result of the paper, we need to define the following objects. For any $j\in \operatorname{Conc}(V)$, set
\[
\lambda_j:=\frac{\pi}{2\omega_j}\in \left(\frac{1}{4},\frac{1}{2}\right),
\]
where $\omega_j$ is the corresponding opening angle of the corner $v^{(j)}$, defined in \eqref{eq:def-omega_j}. Let us define a cut-off function $\phi\in C^\infty_c([0,+\infty))$ as
\begin{equation}\label{cutoff}
\phi(r) =  \begin{cases}
  1  & \text{if}\ 0\leq r\leq \frac{1}{2}, \\
 \frac{e^\frac{r-3/4}{(r-1/2)(r-1)}}{1+e^\frac{r-3/4}{(r-1/2)(r-1)}}& \text{if} \  1/2\leq r\leq 1,\\
  0  & \text{if}\ r\geq 1.
  \end{cases} 
\end{equation}
Set moreover
\[
\Phi_j^+(\theta)=\frac{1}{\sqrt{2\omega_j}}\begin{pmatrix}
             e^{i\left(\lambda_{j}-1/2\right)\theta}  
             \\ e^{-i\left(\lambda_{j}-1/2\right)\theta}
             \end{pmatrix}, 
\quad
\Phi_j^-(\theta)=\frac{-i}{\sqrt{2\omega_j}} \begin{pmatrix}
             e^{-i\left(\lambda_{j}+1/2\right)\theta} 
             \\ e^{i\left(\lambda_{j}+1/2\right)\theta}
             \end{pmatrix},
\]
and define, in polar coordinates
\begin{equation}\label{definvarphi}
\varphi_j^+(r,\theta):= 
r^{\lambda_j-1/2}\phi\left(\frac{r}{\rho}\right)\Phi_j^+(\theta), 
\quad
\varphi_j^-(r,\theta):= r^{-\lambda_j-1/2}\phi\left(\frac{r}{\rho}\right)\Phi_j^-(\theta),
\end{equation}
where 
\[
\begin{gathered}
r:=|x-v^{(j)}|\geq 0,\\
\theta=\pi\cdot\left(1-\operatorname{sign}\left(x_2-v^{(j)}_2\right)\right)+\operatorname{sign}\left(x_2-v^{(j)}_2\right)\arccos\left(\dfrac{x_1-v^{(j)}_1}{r}\right)
\in(0,2\pi).
\end{gathered}
\]
We are now ready to state the first result.
\begin{theorem}\label{thm:adjoint}
Let $\Omega_V$ be a non-degenerate infinite polygon, let $D_V$ be  defined  as in \eqref{defdomainDV} and let $D_V^*$ be its adjoint.
Then for all $\psi\in \mathcal{D}(D^*_V)$ there exist $\psi_0\in \mathcal{D}(D_V)$ and $\{c_j^+, \ c_j^-\}_{j\in \operatorname{Conc}(V)}\subset\mathbb{C}^2$ such that 
\begin{equation}\label{eqpsifin}
\psi =\psi_0+\sum_{j\in \operatorname{Conc}(V)}\left(c_j^{+}\varphi_{j}^+ +c_j^{-}  \varphi_{j}^- \right),
\end{equation}
where the series is convergent in the norm of $D^*_V$ if $\operatorname{Conc}(V)$ is infinite.
\end{theorem}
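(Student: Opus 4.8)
The plan is to prove the statement by localizing the problem to neighborhoods of the corners, reducing the global description of $\mathcal{D}(D_V^*)$ to a model analysis on a single sector, and then controlling the infinite sum by means of the geometric separation condition (iii) of \Cref{def:infinite-polygon}. First I would fix a smooth partition of unity $\{\chi_j\}_{j\in\mathbb{Z}}$ subordinate to the balls $B_\rho(v^{(j)})$, together with a function $\chi_\infty$ supported away from all corners. Condition (iii), which guarantees $|v^{(j)}-v^{(k)}|>3\rho$, ensures these neighborhoods are pairwise disjoint and that on the support of each $\chi_j$ the boundary $\partial\Omega_V$ consists exactly of the two straight edges meeting at $v^{(j)}$. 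For $\psi\in\mathcal{D}(D_V^*)$ one has $\psi,\Dirac_m\psi\in L^2$, and since $D_V^*$ acts as the differential expression $\Dirac_m$ in the distributional sense, each localized piece $\chi_j\psi$ again lies in the maximal realization, now posed on the model sector of opening $\omega_j$.

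Second, on the region away from the corners I would invoke elliptic regularity up to the (flat) boundary: with the infinite mass condition imposed on a straight segment the maximal and minimal realizations coincide, so $\chi_\infty\psi\in H^1$ and satisfies the boundary condition, contributing to $\psi_0$. The heart of the matter is the local analysis near a single corner $v^{(j)}$, which is precisely the model treated in the bounded setting (cf. \cite{LeTeOurBon18,PizzVDB21}): passing to polar coordinates and separating variables, the operator $\Dirac_0$ on the sector with the infinite mass condition on both edges is diagonalized by the angular spinors, and the radial equation forces the homogeneous solutions to be of the form $r^{\pm\lambda_j-1/2}\Phi_j^\pm(\theta)$. For a convex corner ($0<\omega_j<\pi$, so $\lambda_j>1/2$) every such solution lying in $L^2$ automatically lies in $H^1$, so no singular contribution appears; for a concave corner ($\pi<\omega_j<2\pi$, so $\lambda_j\in(1/4,1/2)$) exactly the two functions $\varphi_j^\pm$ of \eqref{definvarphi} belong to $L^2\setminus H^1$ while satisfying $\Dirac_m\varphi_j^\pm\in L^2$ and the boundary condition, and one shows there exist unique $c_j^\pm\in\mathbb{C}$ so that $\chi_j\psi-c_j^+\varphi_j^+-c_j^-\varphi_j^-\in H^1$ with the correct boundary condition near $v^{(j)}$.

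Third, summing the local corrections over $j\in\operatorname{Conc}(V)$ and collecting the $H^1$ remainders into $\psi_0$ yields the decomposition \eqref{eqpsifin}, provided the series converges. Here I would exploit the disjointness of the supports: since $\operatorname{supp}\phi(\cdot/\rho)\subset B_\rho(v^{(j)})$ and the corners are $3\rho$-separated, the functions $\varphi_j^\pm$ have pairwise disjoint supports, so the $L^2$ and graph norms of the partial sums split as $\sum_j\bigl(|c_j^+|^2\|\varphi_j^+\|^2+|c_j^-|^2\|\varphi_j^-\|^2\bigr)$ up to bounded local cross terms coming from $\Dirac_m$. Because $\lambda_j\in(1/4,1/2)$ is bounded and $\rho$ is fixed, the quantities $\|\varphi_j^\pm\|_{L^2}$ and $\|\Dirac_m\varphi_j^\pm\|_{L^2}$ are bounded uniformly in $j$; thus convergence in the graph norm of $D_V^*$ reduces to the summability $\sum_{j}\bigl(|c_j^+|^2+|c_j^-|^2\bigr)<\infty$.

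Finally, the main obstacle, and the genuinely new point compared with the finitely-many-corners case, is establishing this summability together with uniform local estimates over infinitely many corners. I would obtain it by quantifying the coefficient extraction of the second step: the pair $(c_j^+,c_j^-)$ is recovered from the singular part of $\psi$ at $v^{(j)}$ through a local boundary pairing (a Green's-formula/boundary-triple argument on $\Omega_V\cap B_\rho(v^{(j)})$, available in $L^2(\partial\Omega_V;\mathbb{C}^2)$ by \Cref{prop:sigma.nabla=nabla}), yielding a bound of the form $|c_j^+|^2+|c_j^-|^2\lesssim \|\psi\|_{L^2(\Omega_V\cap B_\rho(v^{(j)}))}^2+\|\Dirac_m\psi\|_{L^2(\Omega_V\cap B_\rho(v^{(j)}))}^2$ with a constant independent of $j$. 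Summing over $j$ and using once more the disjointness of the balls, the right-hand side is dominated by $\|\psi\|_{L^2}^2+\|\Dirac_m\psi\|_{L^2}^2<\infty$, which simultaneously proves $\sum_j(|c_j^+|^2+|c_j^-|^2)<\infty$ and the convergence of the series in the graph norm. Securing the $j$-uniformity of both the local decomposition and the extraction constant is exactly where the argument must be carried out with care, since it is the feature that the bounded-domain proofs never need to track.
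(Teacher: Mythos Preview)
Your overall strategy—localize to corners, reduce to the model sector, sum using disjointness—is the same as the paper's, and the treatment of convex corners and of the region away from the corners is fine. The gap is in the convergence argument for the infinite concave case.

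You assert that ``$\|\varphi_j^\pm\|_{L^2}$ and $\|\Dirac_m\varphi_j^\pm\|_{L^2}$ are bounded uniformly in $j$'' because $\lambda_j\in(1/4,1/2)$. This is false for $\varphi_j^-$: a direct computation gives
\[
\|\varphi_j^-\|_{L^2(\Omega_V;\C^2)}^2\;\geq\;\int_0^{\rho/2} r^{-2\lambda_j}\,dr\;=\;\frac{(\rho/2)^{1-2\lambda_j}}{1-2\lambda_j},
\]
which blows up as $\omega_j\to\pi$ (equivalently $\lambda_j\to 1/2$). Hence your reduction ``convergence in graph norm reduces to $\sum_j(|c_j^+|^2+|c_j^-|^2)<\infty$'' is wrong in that regime: one actually needs the weighted condition $\sum_j|c_j^-|^2/(\omega_j-\pi)<\infty$, which is exactly the space $\tilde{\mathcal G}$ appearing later in \Cref{thmallselfadjointextensions}. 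Your proposed extraction via an $L^2(\partial\Omega_V)$ boundary pairing is also problematic here, since $\varphi_j^-$ does not have an $L^2$ boundary trace ($r^{-\lambda_j-1/2}\notin L^2(0,\rho)$), and \Cref{prop:sigma.nabla=nabla} only provides traces for functions in $\mathcal D(D_V)$, not in $\mathcal D(D_V^*)$.

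The paper avoids tracking the $c_j^\pm$ altogether for the convergence step. Instead it proceeds by two successive \emph{orthogonal} decompositions of each localized piece $\psi_\rho^{(j)}$: first project $\s\psi_\rho^{(j)}$ onto the closed subspace $\operatorname{Ran}(D_\rho^{(j)})$ to split off a piece $u_\rho^{(j)}\in\mathcal D(D_\rho^{(j)})$ and a harmonic remainder $w_\rho^{(j)}$; orthogonality lets one split the (already convergent) series $\sum_j\psi_\rho^{(j)}$ into two convergent series, and the $j$-\emph{uniform} spectral lower bound $\|D_\rho^{(j)}v\|\geq(2/\rho)\|v\|$ (proved in \Cref{sec:polar} via Bessel zeros) transfers convergence of $\sum\|\s u_\rho^{(j)}\|^2$ to convergence of $\sum\|u_\rho^{(j)}\|^2$. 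Then a second orthogonal decomposition of $w_\rho^{(j)}$ (from \cite[Theorem~3.2]{PizzVDB21}) peels off $c_j^\pm\varphi_j^\pm$, and orthogonality again splits the series. The uniform constant $2/\rho$ is the substitute for the $j$-independent extraction constant you were looking for, and it is what makes the argument go through without any assumption on the sequence $\omega_j$.
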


With this result we are able to describe the functions in the domain of $D_V^*$. As a first consequence, \Cref{thm:adjoint} shows that if $\operatorname{Conc}(V)=\emptyset$, then $D_V$ is self-adjoint.
Nevertheless, if $\operatorname{Conc}(V)\neq \emptyset$ there are functions that lie outside $\mathcal{D}(D_V)$ and so, in the general case, one has $D_V\subsetneq D_V^*$. This motivates the study of self-adjoint extensions of $D_V$ depending on the number of concave corners.

For our next result, we need to define the following space: 
\[
\ell^2\left(\frac{1}{\omega_j-\pi}\right):=\left\{\{a_j\}_{j\in \operatorname{Conc}(V)}\subset \mathbb{C}^2: \ \sum_{j\in \operatorname{Conc}(V)}\frac{|a_j|^2}{\omega_j-\pi}<\infty \right\},
\]
where $\omega_j\in(\pi,2\pi)$ is the opening angle of the each concave corner of index $j\in \operatorname{Conc}(V)$. 
Now we are able to characterize  self-adjoint extensions of $D_V$.

\begin{theorem}\label{thmallselfadjointextensions}

Let $\Omega_V$ be a non-degenerate infinite polygon, let $D_V$ be  defined  as in \eqref{defdomainDV} and let $D_V^*$ be its adjoint. Set
\begin{equation}\label{eq:g}
\mathcal{G}:=\begin{cases}
  \mathbb{C}^{|\operatorname{Conc}(V)|} & \text{ if }  |\operatorname{Conc}(V)|<\infty, \\
 	\ell^2(\mathbb{C}^2) & \text{ otherwise},
  \end{cases}
\end{equation}
and
\begin{equation}\label{eq:gtilde}
  \mathcal{\tilde{G}}:=\begin{cases}
  \ell^2\left(\frac{1}{\omega_j-\pi}\right) & \text{if}\ |\operatorname{Conc}(V)|=\infty  \  \text{and}\ w_j\to\pi\ \text{up to a subsequence}, \\
 	\mathcal{G} & \text{otherwise}.
  \end{cases} 
\end{equation}
Define $\Gamma=(\Gamma_+,\Gamma_-):\D(D_V^*)\to \mathcal{G}\oplus\mathcal{G}$ as
\begin{equation}\label{defgamma}
\Gamma_+\psi:=\{i c_j^+\}_{j\in \operatorname{Conc}(V)} \quad \text{and} \quad \Gamma_-\psi:=\{c_j^-\}_{j\in \operatorname{Conc}(V)}, \  \text{ for each } \psi \in \mathcal{D}(D_V^*),
\end{equation}
where $\{c_j^+,c_j^-\}_{j\in \operatorname{Conc}(V)}$ satisfy \eqref{eqpsifin}. Then
\begin{enumerate}[label={(\roman*)}]
\item If $\operatorname{Conc}(V)=\emptyset$, then $D_V$ is self-adjoint. 
\item If $\operatorname{Conc}(V)\neq \emptyset$, then $D_V$ admits an infinite number of self-adjoint extensions $T_U$ defined as 
\begin{equation}\label{definTu}
\mathcal{D}(T_U):=\{\psi\in \mathcal{D}(D_V^*): \ i(\mathbbm{1}-U)\Gamma_+ \psi = (\mathbbm{1}+U)\Gamma_- \psi \}, \quad 
T_U \psi=\Dirac_m \psi, 
\end{equation}
where $U\in \mathcal{U}(\mathcal{G})$ and $U(\tilde{\mathcal{G}})\subset \tilde{\mathcal{G}}$.

\end{enumerate}

\end{theorem}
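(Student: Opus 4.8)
The plan is to exhibit $(\mathcal{G},\Gamma_+,\Gamma_-)$ as a boundary triple for the symmetric operator $D_V$ (in the infinite case, a weighted variant of one) and then invoke the standard correspondence between self-adjoint extensions of a symmetric operator and unitaries on the boundary space. Concretely, I would first prove the abstract Green identity
\[
\langle D_V^*\psi,\phi\rangle-\langle\psi,D_V^*\phi\rangle=\langle\Gamma_+\psi,\Gamma_-\phi\rangle-\langle\Gamma_-\psi,\Gamma_+\phi\rangle
\]
for all $\psi,\phi\in\mathcal{D}(D_V^*)$, and then show that the boundary map $(\Gamma_+,\Gamma_-)$ is onto the relevant boundary space. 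Self-adjointness of each $T_U$ then follows from these two ingredients together with the unitarity of $U$.

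Part (i) is immediate from \Cref{thm:adjoint}: if $\operatorname{Conc}(V)=\emptyset$ the sum in \eqref{eqpsifin} is empty, so every $\psi\in\mathcal{D}(D_V^*)$ coincides with some $\psi_0\in\mathcal{D}(D_V)$; hence $\mathcal{D}(D_V^*)\subseteq\mathcal{D}(D_V)$, and since $D_V$ is symmetric by \Cref{prop:sigma.nabla=nabla} we conclude $D_V=D_V^*$.

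For part (ii), the computational heart is the Green identity. Using \eqref{eqpsifin}, the $\psi_0$-contribution vanishes because $D_V$ is symmetric, so the boundary form reduces to pairings of the singular functions $\varphi_j^\pm$. Since the corners are separated by $3\rho$ and each $\varphi_j^\pm$ is supported in $B_\rho(v^{(j)})$, pairings with distinct indices vanish by disjointness of supports, leaving only the on-diagonal cross-terms between $\varphi_j^+$ and $\varphi_j^-$ at a single corner. For each such corner I would integrate $\Dirac_m$ by parts over the sector $\Omega_V\cap B_\rho(v^{(j)})$ with the disk $\{r<\varepsilon\}$ excised: the straight edges lying on $\partial\Omega_V$ contribute nothing because $\varphi_j^\pm$ satisfy the infinite-mass boundary condition there, the outer boundary contributes nothing since $\phi(1)=0$, and only the inner arc $\{r=\varepsilon\}$ survives. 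Because the radial powers $r^{\lambda_j-1/2}$ and $r^{-\lambda_j-1/2}$ multiply to $r^{-1}$, this arc integral has a finite limit as $\varepsilon\to0$; integrating the explicit angular profiles $\Phi_j^\pm$ over the sector of opening $\omega_j$, the normalisation $1/\sqrt{2\omega_j}$ reduces the surviving pairing to a fixed nondegenerate bilinear expression in the coefficients $c_j^\pm$, and inserting the phase in $\Gamma_+\psi=\{ic_j^+\}$ recovers the stated identity. Surjectivity is then the converse construction: given boundary data I set $c_j^\pm$ accordingly and assemble $\psi=\sum_j(c_j^+\varphi_j^++c_j^-\varphi_j^-)$, whose convergence in the graph norm is exactly the last assertion of \Cref{thm:adjoint}. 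With Green's identity and surjectivity in hand, $T_U\subseteq T_U^*$ follows from unitarity of $U$ (the boundary form vanishes on $\mathcal{D}(T_U)$) and the reverse inclusion from maximality; since there are infinitely many admissible $U$, there are infinitely many extensions.

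The main obstacle is the infinite-corner regime where $\omega_j\to\pi$ along a subsequence. A direct estimate gives $\|\varphi_j^-\|_{L^2}^2\sim(\omega_j-\pi)^{-1}$ as $\omega_j\downarrow\pi$, since $\int_0^\rho r^{-2\lambda_j}\,dr\sim(1-2\lambda_j)^{-1}$ and $1-2\lambda_j=(\omega_j-\pi)/\omega_j$, whereas $\|\varphi_j^+\|_{D_V^*}$ stays bounded. Hence the coefficients of $\varphi_j^-$ appearing in a graph-convergent series are constrained to $\tilde{\mathcal{G}}=\ell^2(1/(\omega_j-\pi))$, while those of $\varphi_j^+$ range over all of $\mathcal{G}=\ell^2$. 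This asymmetry means $\Gamma_-$ is valued in $\tilde{\mathcal{G}}\subsetneq\mathcal{G}$, so the boundary map is not onto $\mathcal{G}\oplus\mathcal{G}$ and one does not have a boundary triple in the strict sense. The condition $U(\tilde{\mathcal{G}})\subset\tilde{\mathcal{G}}$ is precisely what repairs this: it makes the relation $i(\mathbbm{1}-U)\Gamma_+\psi=(\mathbbm{1}+U)\Gamma_-\psi$ consistent with $\Gamma_-\psi\in\tilde{\mathcal{G}}$, so that solving for the boundary data yields coefficients in the spaces dictated by convergence and hence genuine elements of $\mathcal{D}(D_V^*)$. Verifying that this compatibility is both necessary and sufficient for self-adjointness, and pushing the extension theory through this weighted infinite-dimensional setting, is the delicate point of the argument.
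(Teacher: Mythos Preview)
Your overall strategy matches the paper's: establish the Green identity, determine the range of $(\Gamma_+,\Gamma_-)$, and then prove $T_U=T_U^*$ directly. One minor methodological difference: for the Green identity you propose an $\varepsilon$-excision integration by parts at each corner, whereas the paper computes $D_V^*\varphi_j^\pm$ explicitly via the polar decomposition of the Dirac operator on a sector (the formula \eqref{eq:compute.HV} from the appendix) and then reads off $\langle D_V^*\varphi_j^+,\varphi_j^-\rangle=\langle D_V^*\varphi_j^-,\varphi_j^+\rangle=i/2$ and $\langle D_V^*\varphi_j^\pm,\varphi_j^\pm\rangle=0$ directly. Both routes work.

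The substantive gap is your treatment of $T_U^*\subset T_U$ in the degenerate case $\tilde{\mathcal G}\subsetneq\mathcal G$. ``Reverse inclusion from maximality'' has no teeth once $(\Gamma_+,\Gamma_-)$ fails to be onto $\mathcal G\oplus\mathcal G$, and your final paragraph acknowledges but does not resolve this. The paper's device is a parametrization lemma: every $\psi\in\mathcal D(T_U)$ satisfies $\Gamma\psi=((\mathbbm 1+U)\mathfrak g,\,i(\mathbbm 1-U)\mathfrak g)$ for a unique $\mathfrak g\in\mathcal G$, and the map $\psi\mapsto\mathfrak g$ has \emph{dense} range in $\mathcal G$. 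Dense range is exactly what $U(\tilde{\mathcal G})\subset\tilde{\mathcal G}$ buys: for $\mathfrak g_n\in\tilde{\mathcal G}$ approximating an arbitrary $\mathfrak g$, one has $(\mathbbm 1\pm U)\mathfrak g_n\in\tilde{\mathcal G}$, so by the range computation there exist $\psi_n\in\mathcal D(T_U)$ with these boundary values. Then the Green identity applied to $\tilde\psi\in\mathcal D(T_U^*)$ against all $\psi\in\mathcal D(T_U)$ gives $\langle -i(\mathbbm 1-U^*)\Gamma_+\tilde\psi-(\mathbbm 1+U^*)\Gamma_-\tilde\psi,\,\mathfrak g\rangle=0$ for a dense set of $\mathfrak g$, forcing the boundary condition. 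You correctly identified that $U(\tilde{\mathcal G})\subset\tilde{\mathcal G}$ is needed to construct elements of $\mathcal D(T_U)$, but did not connect this to the density argument that drives the adjoint inclusion. A smaller point: convergence of $\sum(c_j^+\varphi_j^++c_j^-\varphi_j^-)$ for \emph{prescribed} data $(c_j^+)\in\mathcal G$, $(c_j^-)\in\tilde{\mathcal G}$ is not the content of \Cref{thm:adjoint} (which decomposes a given $\psi$); it needs the separate norm estimates on $\varphi_j^\pm$ and $D_V^*\varphi_j^\pm$ that you sketched, and the paper carries these out in its \Cref{lemma:boundarytriples}.
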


\begin{remark}
If $\mathcal{G} = \tilde{\mathcal{G}}$, then $(\mathcal{G}, \Gamma_+, \Gamma_-)$ forms a \emph{boundary triple} for $D_V^*$ (see \cite{Behrndt2020BoundaryVP} and \cite{schmüdgen2012unbounded} for definitions). In this case, \eqref{definTu} characterizes all possible self-adjoint extensions of $D_V$; see \cite{BG81} for further details. This scenario reveals some analogies with Coulomb-type interactions as discussed in \cite{CP19}.

However, if $\tilde{\mathcal{G}} \subsetneq \mathcal{G}$, $(\mathcal{G}, \Gamma_+, \Gamma_-)$ is not a boundary triple, so \eqref{definTu} may not describe all self-adjoint extensions. Instead, $(\mathcal{G}, \Gamma_+, \Gamma_-)$ constitutes a \textit{generalized boundary triple}, as outlined in \cite{genboundary}. Specifically, one can define the \(\gamma\)-field \(\gamma\) and the Weyl function $M$ as follows:
\[
\gamma(z) = \left(\Gamma^+ \upharpoonright \operatorname{ker}(D_V^* - z)\right)^{-1}\quad\text{and}\quad
M(z) = \Gamma^- \left(\Gamma^+ \upharpoonright \operatorname{ker}(D_V^* - z)\right)^{-1},
\]
for \(z \in \rho(D_V^*)\). 
Using the approach from \cite{BHSS22}, one can define the extensions $T_\vartheta$ of $D_V$ by
\[
T_\vartheta := D_V^* \upharpoonright \operatorname{ker}(\Gamma_1 - \vartheta \Gamma_0),
\]
where \(\vartheta\) is a self-adjoint operator in $\mathcal{G}$. The self-adjointness of these extensions can be proved using \cite[Section 5]{MR3832009} under certain \emph{decay} conditions on \(\gamma\) and \(M\). 
However, in this paper, we pursue a different strategy because we are unable to derive closed-form expressions for \(\gamma\) and \(M\), which limits our ability to apply the mentioned results.
\end{remark}

In the next result, we characterize the distinguished extension among the infinitely many possible ones.

\begin{theorem}\label{thmH12}
Let $\Omega_V$ be a non-degenerate infinite polygon, let $D_V$ be  defined  as in \eqref{defdomainDV}.
Assume moreover that $\operatorname{Conc}(V)\neq \emptyset$ and that it exists $\epsilon>0$ such that
\begin{equation}\label{2piepsilon}
2\pi-\omega_j>\epsilon \quad \text{for all}\  j\in\text{Conc}(V).
\end{equation}
Then, among all the self-adjoint extensions of $D_V$ defined on \eqref{definTu}, $T_{\mathbbm{1}}$ is the unique one verifying
\[
\D(T_{\mathbbm{1}})\subseteq H^{s}(\Omega_V;\C^2),
\]
for any $s>0$ verifying
\begin{equation}\label{inf s}
s<\underset{j\in \operatorname{Conc}(V)}{\operatorname{inf}}\frac{\pi+\omega_j}{2\omega_j}.
\end{equation} 

\end{theorem}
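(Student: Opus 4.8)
The plan is to prove the result in two parts: first that $\D(T_{\mathbbm{1}})\subseteq H^s(\Omega_V;\C^2)$ for the stated range of $s$, and second that $T_{\mathbbm{1}}$ is the \emph{unique} extension with this regularity. The key observation is that by \Cref{thm:adjoint}, every $\psi\in\D(D_V^*)$ decomposes as $\psi=\psi_0+\sum_{j}(c_j^+\varphi_j^+ + c_j^-\varphi_j^-)$ with $\psi_0\in\D(D_V)\subseteq H^1$, so the Sobolev regularity of $\psi$ is entirely governed by the singular functions $\varphi_j^\pm$. A direct computation of the Sobolev regularity of $r^{\pm\lambda_j-1/2}\phi(r/\rho)\Phi_j^\pm(\theta)$ in polar coordinates shows that $\varphi_j^+\in H^s$ locally precisely when $s<\lambda_j+1/2 = \frac{\pi+\omega_j}{2\omega_j}$ (recall $\lambda_j=\pi/(2\omega_j)$), while $\varphi_j^-\in H^s$ only for $s<-\lambda_j+1/2 = \frac{\omega_j-\pi}{2\omega_j}<1/2$. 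Thus for any $s>1/2$ in the admissible range \eqref{inf s}, the presence of any nonzero $c_j^-$ destroys $H^s$-regularity, whereas the $\varphi_j^+$ terms remain in $H^s$.

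First I would make this regularity computation precise and uniform in $j$. The point is that \eqref{2piepsilon} forces $\omega_j<2\pi-\epsilon$, hence $\lambda_j=\pi/(2\omega_j)>\pi/(2(2\pi-\epsilon))$ is bounded away from its infimum, which is what allows the infimum in \eqref{inf s} to be strictly larger than $1/2$ and gives a genuine gap so that some $s>1/2$ is admissible. I would estimate $\|\varphi_j^+\|_{H^s}$ with constants controlled uniformly using the scale $\rho$ (which is common to all corners by \Cref{def:infinite-polygon}(iii)) together with the separation $|v^{(j)}-v^{(k)}|>3\rho$, which guarantees the supports of the cut-offs $\phi(\cdot/\rho)$ centered at distinct corners are disjoint; this disjointness is essential to sum the local $H^s$-norms without cross terms. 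Controlling the convergence of $\sum_j c_j^+\varphi_j^+$ in $H^s$ for $\operatorname{Conc}(V)$ infinite will require relating the coefficient summability (coming from the graph-norm convergence in \Cref{thm:adjoint}) to the $H^s$-summability, which is the technical heart of the estimate.

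Next I would identify the extension. Since $T_{\mathbbm{1}}$ corresponds to $U=\mathbbm{1}$, the boundary condition in \eqref{definTu} becomes $i(\mathbbm{1}-\mathbbm{1})\Gamma_+\psi=(\mathbbm{1}+\mathbbm{1})\Gamma_-\psi$, i.e. $\Gamma_-\psi=0$, which by \eqref{defgamma} means exactly $c_j^-=0$ for all $j\in\operatorname{Conc}(V)$. Combined with the regularity computation, this shows $\D(T_{\mathbbm{1}})\subseteq H^s$: every $\psi\in\D(T_{\mathbbm{1}})$ has only $\psi_0\in H^1\subseteq H^s$ and the $H^s$-regular terms $c_j^+\varphi_j^+$. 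For uniqueness I would argue contrapositively: if $U\neq\mathbbm{1}$, then the extension $T_U$ contains some $\psi\in\D(T_U)$ with $\Gamma_-\psi\neq 0$, hence some $c_{j_0}^-\neq 0$; since $\varphi_{j_0}^-\notin H^s$ for $s>\frac{\omega_{j_0}-\pi}{2\omega_{j_0}}$ (in particular for any $s>1/2$), and the remaining terms are $H^s$-regular, no cancellation can occur, so $\psi\notin H^s$ and $\D(T_U)\not\subseteq H^s$.

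The main obstacle I anticipate is the uniform control of the $H^s$-norms and the convergence of the series in the infinite-corner case. The subtlety is twofold: the Sobolev exponents $\lambda_j+1/2$ vary with $j$, so the infimum in \eqref{inf s} rather than a single exponent governs the admissible range, and one must verify that a fixed $s$ below this infimum works simultaneously for all corners. Assumption \eqref{2piepsilon} is precisely what prevents $\omega_j\to 2\pi$ (which would push $\lambda_j+1/2\to 1/2$ and collapse the gap), ensuring the infimum exceeds $1/2$ and that the per-corner $H^s$-bounds carry a uniform constant. I would also need to check carefully that the global $H^s(\Omega_V)$-norm genuinely decomposes, up to uniformly bounded constants, into the sum of local contributions near each corner plus a smooth remainder away from all corners — this uses the disjoint-support structure from the separation hypothesis and a partition-of-unity argument, and is where the unboundedness of $\Omega_V$ must be handled with care rather than invoking compactness.
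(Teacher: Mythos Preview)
Your approach is correct and matches the paper's: reduce to the regularity of $\sum_j c_j^+\varphi_j^+$ via \Cref{thm:adjoint}, identify $T_{\mathbbm{1}}$ with the condition $\Gamma_-\psi=0$, and rule out other extensions because $\varphi_j^-\notin H^{1/2}$. Two small corrections are in order. First, an arithmetic slip: as $\omega_j\to 2\pi$ one has $\lambda_j+\tfrac12=\tfrac{\pi}{2\omega_j}+\tfrac12\to\tfrac34$, not $\tfrac12$; the infimum in \eqref{inf s} always lies in $[\tfrac34,1)$, so the role of \eqref{2piepsilon} is \emph{not} to keep this infimum above $\tfrac12$ but purely to secure uniform constants in the per-corner $H^s$ estimates (in the paper this manifests as a uniform lower bound on the geometric quantity $S_j=\min(\tfrac12,|\sin\omega_j|/2)$, which would degenerate to $0$ if $\omega_j\to 2\pi$). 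Second, disjoint supports do \emph{not} eliminate cross terms in the Gagliardo seminorm, since $[\,\cdot\,]_{H^s}$ is nonlocal; the paper computes $[\sum_j c_j\varphi_j^+]_{H^s}^2$ directly and splits it into a diagonal part $I$ (same corner) and an off-diagonal part $II$ (different corners). For $I$ the key device is a convexity lemma (\Cref{lemma: inequality_sen}) ensuring that whenever $|x-y|\le S_j|x|$ the segment $[x,y]$ stays in $\Omega_\rho^{(j)}$, so the mean value theorem applies; for $II$ the separation $|v^{(j)}-v^{(k)}|>3\rho$ gives $|x-y|\ge\rho$ and the cross integrals are finite. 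Your partition-of-unity sketch would ultimately have to reproduce both of these estimates.
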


\begin{remark}
If \eqref{2piepsilon} does not hold, $\omega_j\to 2\pi$ up to a subsequence for $j \to \infty$, and so the localized corner $\Omega_V\cap B_{\rho}(v^{(j)})$ approaches to the whole disk.
In this limit domain, the functions $\varphi_j^+,\varphi_j^-$ defined in \eqref{definvarphi} tend to functions that do not satisfy the infinite mass boundary conditions. Therefore, another approach is needed.  
\end{remark}

\begin{remark}
Since
\[
\inf_{j \in \operatorname{Conc}(V)} \frac{\pi + \omega_j}{2 \omega_j} \in \left[\frac{3}{4}, 1\right],
\]
the extension \(T_{\mathbbm{1}}\) is the unique self-adjoint extension defined by \eqref{definTu} such that
\[
\D(T_{\mathbbm{1}}) \subset H^{1/2}(\Omega_V; \mathbb{C}^2).
\]
The Sobolev space \(H^{1/2}\) is the domain of \(|\Dirac_m|^{1/2}\), representing the space of \emph{finite potential energy} or \emph{energy space} in Dirac theory, as discussed in \cite{CP18}.
\end{remark}

Finally, we study the spectral properties of the Dirac operator defined on an infinite set of non-degenerate corners. For this, we restrict our study to the distingued self-adjoint extension $T_{\mathbbm{1}}$.

\begin{theorem}\label{thm:spectrum}
Let $\Omega_V$ be a non-degenerate infinite polygon. Define
\begin{equation}\label{def: Dsa}
D_{sa}:=\begin{cases}
  D_V & \text{ if }  \operatorname{Conc}(V)=\emptyset, \\
 	T_{\mathbbm{1}} & \text{ otherwise},
  \end{cases}
\end{equation}
where $D_V$ is defined  as in \eqref{defdomainDV} and $T_{\mathbbm{1}}$ as in \eqref{definTu} for $U=\mathbbm{1}$. Assume moreover, up to a rotation or translation, that 
\begin{equation}\label{eq:bdd-condition-spectrum}
\Omega_V\nsubseteq [-R,R]\times [-L,+\infty)\quad\text{for any}\ R,L>0.
\end{equation}
Then:
\begin{equation}\label{eq:spectrum-T1}
\sigma(D_{sa})=\sigma_{ess}(D_{sa})=(-\infty,-m]\cup[m,+\infty).
\end{equation}

\end{theorem}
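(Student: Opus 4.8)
The plan is to establish the two inclusions in \eqref{eq:spectrum-T1} separately: first that $\sigma(D_{sa})$ omits the open gap $(-m,m)$, and then that both half-lines $(-\infty,-m]\cup[m,+\infty)$ are contained in $\sigma_{ess}(D_{sa})$. Since $\sigma_{ess}(D_{sa})\subseteq\sigma(D_{sa})$ and $\sigma_{ess}$ is closed, these two facts force $\sigma(D_{sa})=\sigma_{ess}(D_{sa})=(-\infty,-m]\cup[m,+\infty)$. The argument is uniform in the two cases of \eqref{def: Dsa}: when $\operatorname{Conc}(V)=\emptyset$ we work on the self-adjoint $D_V$ with $H^1$ domain, and otherwise on $T_{\mathbbm 1}$, for which the condition $U=\mathbbm 1$ in \eqref{definTu} forces $\Gamma_-\psi=0$, i.e. $c_j^-=0$ in the decomposition \eqref{eqpsifin}, so only the milder profiles $\varphi_j^+$ survive and $\D(T_{\mathbbm 1})\subset H^s$, $s>1/2$, by \Cref{thmH12}.

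For the spectral gap I would show $\|\Dirac_m\psi\|^2\geq m^2\|\psi\|^2$ for all $\psi\in\D(D_{sa})$, which by the spectral theorem gives $\sigma(D_{sa})\subseteq(-\infty,-m]\cup[m,+\infty)$. Since $-i\boldsymbol{\sigma}\cdot\nabla$ anticommutes with $\sigma_3$, one has $\Dirac_m^2=-\Delta+m^2$ in the bulk, whence
\[
\|\Dirac_m\psi\|^2-m^2\|\psi\|^2=\|\nabla\psi\|^2+2m\,\re\langle -i\boldsymbol{\sigma}\cdot\nabla\psi,\sigma_3\psi\rangle .
\]
Integrating the cross term by parts yields a boundary integral whose real part vanishes under the infinite mass condition $-i\sigma_3(\boldsymbol{\sigma}\cdot\nn)\psi=\psi$, while the edge contributions carry no curvature because every edge of $\Omega_V$ is a straight segment; for the $H^1$ core this already gives $\|\Dirac_m\psi\|^2=\|\nabla\psi\|^2+m^2\|\psi\|^2\geq m^2\|\psi\|^2$. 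The delicate point is the singular extension: each $\varphi_j^+$ has $\nabla\varphi_j^+\notin L^2$ near $v^{(j)}$ (the exponent is $\lambda_j-3/2$ with $\lambda_j<1/2$), so the identity must be read as a limit of integrations by parts on $\Omega_V\setminus\bigcup_j B_\epsilon(v^{(j)})$. I would justify the boundary integrals by means of the $L^2(\partial\Omega_V)$ trace of \Cref{prop:sigma.nabla=nabla}, check that the edge integrals still vanish because the retained $\varphi_j^+$ satisfy the boundary condition on the two edges meeting at $v^{(j)}$, and show with the explicit radial--angular form of $\varphi_j^+$ that the circles $\partial B_\epsilon(v^{(j)})$ contribute a nonnegative (in fact vanishing) amount in the limit $\epsilon\to0$, cancelling the divergence of $\|\nabla\psi\|^2_{\Omega_\epsilon}$ and leaving the inequality $\geq m^2\|\psi\|^2$ intact.

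For the reverse inclusion I would produce, for every $\lambda$ with $|\lambda|\geq m$, a singular Weyl sequence. Fix $\mathbf k\in\R^2$ with $|\mathbf k|^2=\lambda^2-m^2$ and a constant spinor $u$ so that the plane wave $e^{i\mathbf k\cdot\xx}u$ solves $(\Dirac_m-\lambda)e^{i\mathbf k\cdot\xx}u=0$ on $\R^2$. Using \eqref{eq:bdd-condition-spectrum} I would extract a sequence of balls $B_{R_n}(\xx_n)\subset\Omega_V$ with $R_n\to\infty$ and $|\xx_n|\to\infty$, and set $\psi_n:=\chi_n\,e^{i\mathbf k\cdot\xx}u$, where $\chi_n$ is a rescaled bump supported in $B_{R_n}(\xx_n)$. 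Then $\psi_n\in H^1_0(B_{R_n}(\xx_n))\subset\D(D_{sa})$ and satisfies the boundary condition trivially, since its support avoids $\partial\Omega_V$. Because $(\Dirac_m-\lambda)\psi_n=-i(\boldsymbol{\sigma}\cdot\nabla\chi_n)e^{i\mathbf k\cdot\xx}u$ is supported in an annulus where $|\nabla\chi_n|\lesssim R_n^{-1}$, one gets $\|(\Dirac_m-\lambda)\psi_n\|=O(1)$ while $\|\psi_n\|\asymp R_n$, so the Weyl quotient tends to $0$; and $\xx_n\to\infty$ gives $\psi_n\rightharpoonup 0$. Hence $\{\psi_n/\|\psi_n\|\}$ is a singular sequence and $\lambda\in\sigma_{ess}(D_{sa})$. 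Letting $\lambda$ range over $|\lambda|\geq m$ yields $(-\infty,-m]\cup[m,+\infty)\subseteq\sigma_{ess}(D_{sa})$, which together with the gap bound completes the proof.

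The main obstacle is the geometric lemma underpinning the Weyl construction: turning the negative hypothesis \eqref{eq:bdd-condition-spectrum} (that $\Omega_V$ is not contained in any half-strip $[-R,R]\times[-L,+\infty)$, up to a rigid motion) into the positive statement that $\Omega_V$ contains balls of radius tending to infinity whose centres escape to infinity. This is where the epigraph structure \eqref{definSv} and the uniform corner separation $|v^{(j)}-v^{(k)}|>3\rho$ must be exploited; I would argue that a bounded inradius would confine $\Omega_V$, in a suitable orientation, to such a half-strip, contradicting the hypothesis, and that unbounded inradius lets the large inscribed balls be slid off to infinity. By comparison Step~2 is then routine, and the only genuinely new analytic point in Step~1 is the bookkeeping of the singular profiles $\varphi_j^+$ retained in $\D(T_{\mathbbm 1})$.
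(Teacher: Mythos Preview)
Your overall strategy---quadratic-form lower bound $\|D_{sa}\psi\|^2\ge m^2\|\psi\|^2$ for the gap, plus plane-wave-times-bump Weyl sequences supported on escaping balls for the essential spectrum---is exactly the paper's. The Weyl construction matches the paper almost verbatim, and the paper, like you, simply asserts that \eqref{eq:bdd-condition-spectrum} yields balls $B_n(x_n)\subset\Omega_V$ with $|x_n|\to\infty$; so your highlighted ``geometric lemma'' is indeed the soft spot, but one the paper does not elaborate on either.

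Two points on the gap step. First, the boundary cross term does \emph{not} vanish under the infinite-mass condition: \Cref{prop:sigma.nabla=nabla} gives $2\operatorname{Re}\langle -i\boldsymbol{\sigma}\cdot\nabla\psi,\sigma_3\psi\rangle=\|\psi\|_{L^2(\partial\Omega_V)}^2$, and the heart of the paper's argument is extending this identity from $\D(D_V)$ to all of $\D(D_{sa})$ (this is \eqref{eq:2Re-commutator}). The sign is still favourable, so your inequality survives, but your displayed identity on the $H^1$ core is off by $m\|\psi\|_{L^2(\partial\Omega_V)}^2$. Second, the detour through $\|\nabla\psi\|^2$ and the $\epsilon$-excision limit is unnecessary and is precisely what makes the singular $\varphi_j^+$ look problematic: the paper never replaces $\|-i\boldsymbol{\sigma}\cdot\nabla\psi\|^2$ by $\|\nabla\psi\|^2$, since the former is finite and nonnegative for every $\psi\in\D(D_V^*)$ by definition. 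Only the cross term needs work, and the paper handles it by decomposing $\psi=\psi_0+\sum_j c_j^+\varphi_j^+$ and checking $2\operatorname{Re}\langle -i\boldsymbol{\sigma}\cdot\nabla\varphi_j^+,\sigma_3\varphi_j^+\rangle=\|\varphi_j^+\|_{L^2(\partial\Omega_V)}^2$ directly from \eqref{proof:diracpolar} via a one-dimensional radial integration by parts, where no singularity appears.
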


\begin{remark}\label{rem:bound-spectrum}
The hypothesis \eqref{eq:bdd-condition-spectrum} is essential to get \eqref{eq:spectrum-T1}. 
Indeed, in the case that $\Omega_V:=(-1,1)\times (0,+\infty)$ we get
\[
\sigma(D_V)=\sigma_{ess}(D_V)=
\left(-\infty,-\sqrt{m^2+E_1(m)}\right]\cup\left[\sqrt{m^2+E_1(m)},+\infty\right),
\]
being $E_1(m)$ the unique solution of the equation $m\sin(2\sqrt{E})+\sqrt{E}\cos(2\sqrt{E})=0$ lying in the interval $\left[\frac{\pi^2}{16},\frac{\pi^2}{4}\right)$.
We postpone the proof of this result at the end of \Cref{sec:spectrum}.
\end{remark}

\subsection*{Structure of the paper} 
This paper is organized as follows. In \Cref{sec:adjoint} we characterise the domain of the adjoint operator of $D_V$ and we prove \Cref{thm:adjoint}. 
In Section \Cref{sec:quasi-boundary} we give a detailed description of the self-adjoint extensions of $D_V$ and we prove \Cref{thmallselfadjointextensions}. 
In \Cref{sec:distinguished} we discuss the properties of the \emph{distinguished} self-adjoint extension and we prove \Cref{thmH12}. 
Finally in \Cref{sec:spectrum} we discuss the spectral properties of $D_V$ and we prove \Cref{thm:spectrum}.
For sake of completeness, in \ref{sec:polar} we get a \emph{polar decomposition} of the infinite mass Dirac operator localized close to a corner.

\section{Domain of the adjoint operator and proof of \texorpdfstring{\Cref{thm:adjoint}}{Theorem 1.2}}
\label{sec:adjoint}
This section is devoted to the proof of \Cref{thm:adjoint}.
Without loss of generality we can assume $m=0$. In fact, the multiplication by $m\sigma_3$ is a bounded and symmetric operator in $L^2(\Omega_V;\C^2)$ and it is not relevant to describe the domain of $D_V^*$.

It is well known that when $\Omega$ is a bounded connected Lipschitz domain, the Sobolev space $H^1(\Omega;\C^2)$ is not the maximal domain for the differential expression $\boldsymbol{\sigma} \cdot\nabla$. For this
reason, it is convenient to introduce
\[
\mathcal{K}(\Omega):=\{\psi\in L^2(\Omega;\C^2):\boldsymbol{\sigma} \cdot\nabla \psi\in L^2(\Omega;\C^2)\}
,
\qquad
\norm{\psi}_{\mathcal{K}(\Omega)}=\norm{\psi}_{L^2(\Omega;\C^2)}+\norm{\sigma\cdot\nabla\psi}_{L^2(\Omega;\C^2)}.
\]
Then, by construction, $\D(D_V^*)\subset \mathcal{K}(\Omega_V)$ and $\norm{\psi}_{D_V^*}=\norm{\psi}_{\mathcal{K}(\Omega_V)}$.
We can now prove a useful result.
\begin{proposition}\label{prop:sigma.nabla=nabla}
Let $\Omega_V \subset \mathbb{R}^2$ be a non-degenerate infinite polygon and let $D_V$ be defined as in \eqref{defdomainDV}. Then $D_V$ is a closed and symmetric operator and for any $\psi\in \mathcal{D}(D_V)$ we have 
\begin{gather}
\label{eq:sigma.nabla=nabla}
 \norm{\boldsymbol{\sigma} \cdot\nabla \psi}_{L^2(\Omega_V;\C^2)}=\norm{\nabla \psi}_{L^2(\Omega_V;\C^2)},\\
\label{eq:psiinnerproduct}
 2\operatorname{Re}\langle -i\boldsymbol{\sigma}\nabla\psi,\sigma_3\psi\rangle_{L^2(\Omega_V;\C^2)}=\norm{\psi}^2_{L^2(\partial \Omega_V;\C^2)}. 
\end{gather}
\end{proposition}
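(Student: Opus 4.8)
The plan is to prove the two identities locally first and then sum up, exploiting the special algebraic structure of the Pauli matrices together with the infinite mass boundary condition. I would begin with the pointwise identity \eqref{eq:sigma.nabla=nabla}. Writing $\boldsymbol{\sigma}\cdot\nabla = \sigma_1\partial_1 + \sigma_2\partial_2$ and expanding $|\boldsymbol{\sigma}\cdot\nabla\psi|^2$, the cross terms involve $\sigma_1\sigma_2 + \sigma_2\sigma_1 = 0$ (the Pauli matrices anticommute), so $(\boldsymbol{\sigma}\cdot\nabla)^*(\boldsymbol{\sigma}\cdot\nabla) = -\Delta\cdot\mathbbm{1}$ as differential expressions, giving $|\boldsymbol{\sigma}\cdot\nabla\psi|^2 = |\partial_1\psi|^2 + |\partial_2\psi|^2 + (\text{divergence of a boundary term})$. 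Precisely, I expect
\[
\int_{\Omega_V}|\boldsymbol{\sigma}\cdot\nabla\psi|^2 = \int_{\Omega_V}|\nabla\psi|^2 + \int_{\partial\Omega_V}(\text{boundary integrand}),
\]
and the key claim is that the boundary integrand vanishes identically under the infinite mass condition $-i\sigma_3\,\boldsymbol{\sigma}\cdot\nn\,\psi = \psi$. I would verify this by parametrizing the boundary term via integration by parts: the difference $\|\boldsymbol{\sigma}\cdot\nabla\psi\|^2 - \|\nabla\psi\|^2$ reduces to an integral over $\partial\Omega_V$ of a quadratic form in the trace of $\psi$, which one checks is annihilated by the projection defining the boundary condition.

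For \eqref{eq:psiinnerproduct} I would integrate by parts the expression $\langle -i\boldsymbol{\sigma}\cdot\nabla\psi, \sigma_3\psi\rangle$. Since $-i\boldsymbol{\sigma}\cdot\nabla$ is (formally) self-adjoint and $\sigma_3$ is Hermitian, the bulk contributions to $2\operatorname{Re}\langle -i\boldsymbol{\sigma}\cdot\nabla\psi,\sigma_3\psi\rangle$ cancel and one is left with a boundary integral of the form $\int_{\partial\Omega_V}\langle(\sigma_3\,\boldsymbol{\sigma}\cdot\nn)\,\psi,\psi\rangle$ (up to constants and signs). The infinite mass condition rewrites $\sigma_3\,\boldsymbol{\sigma}\cdot\nn\,\psi = -i\psi$ on the boundary, turning the integrand into $\langle\psi,\psi\rangle = |\psi|^2$, which yields exactly $\|\psi\|^2_{L^2(\partial\Omega_V;\C^2)}$. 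The algebraic lemma I need here is the anticommutation relation $\sigma_3(\boldsymbol{\sigma}\cdot\nn) = -(\boldsymbol{\sigma}\cdot\nn)\sigma_3$ together with $(\boldsymbol{\sigma}\cdot\nn)^2 = \mathbbm{1}$, which guarantees that $-i\sigma_3\,\boldsymbol{\sigma}\cdot\nn$ is a self-adjoint involution and hence a genuine orthogonal projection condition.

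The symmetry of $D_V$ then follows immediately: for $\psi,\eta\in\mathcal{D}(D_V)$, the sesquilinear version of the same integration by parts produces a boundary term $\int_{\partial\Omega_V}\langle(-i\sigma_3\,\boldsymbol{\sigma}\cdot\nn)\psi,\sigma_3\eta\rangle$-type expression which vanishes because both traces lie in the range of the same boundary projection. Closedness I would obtain by showing that $\mathcal{D}(D_V)$ is complete in the graph norm: identity \eqref{eq:sigma.nabla=nabla} shows the graph norm $\|\cdot\|_{D_V}$ is equivalent to the full $H^1$-norm on $\mathcal{D}(D_V)$, so a graph-Cauchy sequence converges in $H^1(\Omega_V;\C^2)$, and one only needs that the boundary condition is preserved under $H^1$-limits—which holds because the local trace map is continuous from $H^1$ into $H^{1/2}(\partial\Omega_V)$ on each bounded piece.

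The main obstacle is the unboundedness of $\Omega_V$ and the infinitely many corners, which obstructs a global trace theorem. I would handle this by working locally: cover $\partial\Omega_V$ by the balls $B_\rho(v^{(j)})$ around the corners (where the boundary is merely Lipschitz) and the straight edge segments in between (where it is smooth). On each such piece the trace and integration-by-parts are classical, all boundary terms are genuine $L^2(\partial\Omega_V)$ integrals, and the non-degeneracy condition (iii), $|v^{(j)}-v^{(k)}|>3\rho$, guarantees the corner balls are disjoint so the local computations patch together without overlap. Summing the nonnegative local contributions of \eqref{eq:sigma.nabla=nabla} over all pieces gives the global identity by monotone convergence, and the same covering argument promotes \eqref{eq:psiinnerproduct} and the symmetry to the whole unbounded domain; the delicate point to check carefully is that the series of boundary contributions converges and that no mass escapes at the corners, which is exactly where the vanishing of each local boundary integrand under the boundary condition is essential.
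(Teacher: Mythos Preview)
Your algebraic outline for \eqref{eq:sigma.nabla=nabla} and \eqref{eq:psiinnerproduct} is sound, but the route differs from the paper's and your handling of the unboundedness has a genuine gap. The paper does not redo the Pauli-matrix integration by parts from scratch: it truncates $\psi$ by a radial cutoff, setting $\psi_R:=\phi(|\cdot|/R)\,\psi\in\mathcal{D}(D_V)$, invokes the known bounded-domain identity \cite[Lemma~2.1]{PizzVDB21} (which applies because a polygon has zero curvature on each edge) to obtain both equalities for the compactly supported $\psi_R$, and then sends $R\to\infty$ by dominated convergence. Closedness and symmetry follow exactly as you describe once \eqref{eq:sigma.nabla=nabla} and \eqref{eq:psiinnerproduct} are in hand. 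One minor correction to your algebra: the vanishing of the boundary integrand in \eqref{eq:sigma.nabla=nabla} is not a consequence of the infinite-mass projection alone---on a curved boundary the same computation leaves a curvature term---so the piecewise flatness of $\partial\Omega_V$ is essential here, not incidental.

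The gap is in your localisation step. Covering $\partial\Omega_V$ by corner balls and edge segments addresses the regularity of the \emph{boundary}, but it does not by itself justify the divergence theorem over the unbounded \emph{domain} $\Omega_V$: integrating the cross-term divergence over $\Omega_V\cap B_R$ produces, besides the piece on $\partial\Omega_V$, a contribution on the artificial arc $\partial B_R\cap\Omega_V$ that must be shown to vanish as $R\to\infty$. Your cover does not partition $\Omega_V$ (it only sees a neighbourhood of the boundary) and says nothing about this term; the phrase ``summing the nonnegative local contributions by monotone convergence'' does not apply either, since the local identities are equalities, not one-sided bounds, and any decomposition into overlapping or abutting pieces introduces internal boundary terms you would have to track and cancel. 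The cleanest repair is precisely the paper's cutoff $\psi\mapsto\phi_R\psi$: this compactly supported approximant still lies in $\mathcal{D}(D_V)$, your boundary computation is then rigorous on a bounded region, and the limit $R\to\infty$ is routine.
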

\begin{proof}
Let $\psi\in\D(D_V)$, let $\phi$ be the cut-off function defined in \eqref{cutoff} and for $R>0$, define 
\[
\phi_R(x)=\phi\left(\frac{x}{R}\right)
\quad\text{and}\quad
\psi_R:=\phi_R\cdot\psi.
\]
For any $R>0$, $\psi_R\in \D(D_V)$  and, by the dominated convergence theorem, we get that when $R\to +\infty$, $\psi_R\to \psi$ and 
$\boldsymbol{\sigma} \cdot\nabla\psi_R\to \boldsymbol{\sigma} \cdot\nabla\psi$ in $L^2(\Omega_V;\C^2)$.
Moreover, since $\psi_R$ is compactly supported and the piecewise continuous curvature of $\Omega_V$ is equal to $0$, thanks to \cite[Lemma 2.1]{PizzVDB21}  $\psi_R$ verifies \eqref{eq:sigma.nabla=nabla}.
Thus, passing to the limit $R\to+\infty$, $\psi$ verifies \eqref{eq:sigma.nabla=nabla}. This means that the $H^1$-norm and the $D_V$-norm are equivalent on $\D(D_V)$. Then, by the continuity of the trace
\[
\overline{\D(D_V)}^{D_V}= \overline{\D(D_V)}^{H^1}\subset 
\D(D_V),
\]
thus $D_V$ is closed.
Moreover, by \cite[Lemma 2.1]{PizzVDB21} we have
\[
\begin{split}
 2\operatorname{Re}\langle -i\boldsymbol{\sigma}\cdot\nabla\psi_R,\sigma_3\psi_R \rangle_{L^2(\Omega_V;\C^2)}
&=
\int_{\partial\Omega_V} \langle -i\boldsymbol{\sigma} \cdot \textbf{n}\psi_R,\sigma_3\psi_R\rangle_{\C^2}
=
\norm{\psi_R}^2_{L^2(\partial \Omega_V;\C^2)},\\
\langle -i\boldsymbol{\sigma}\cdot\nabla\psi_R,\psi_R \rangle_{L^2(\Omega_V;\C^2)}
&=
\langle \psi_R, -i\boldsymbol{\sigma}\cdot\nabla\psi_R \rangle_{L^2(\Omega_V;\C^2)}.
\end{split}
\]
where in the first equation we have used the fact that $\sigma_3$ is a symmetric matrix and $\psi_R$ verifies the infinite mass boundary conditions $-i\sigma_3\boldsymbol{\sigma}\cdot\textbf{n}\psi_R=\psi_R$ on $\partial\Omega_V$. 
Thus, passing to the limit $R\to+\infty$, we deduce that $\psi$ verifies \eqref{eq:psiinnerproduct} and $D_V$ is a symmetric operator.

\end{proof}

We want to describe the space $\D(D_V^*)$. It is known that the self-adjointness of the Dirac operator defined on a single wedge depends on its convexity. When it is concave, it admits an infinite number of self-adjoint extensions (see \cite{LeTeOurBon18}, \cite{PizzVDB21}). Therefore, since the problem is local, it is useful to localize the functions on $\D(D_V^*)$ near to the concave corners.

\begin{proposition}\label{prop:convergnce-in-op-norm}
Let $\phi$ be the cut-off function defined in \eqref{cutoff} and for $j\in \operatorname{Conc}(V)$, define the 
cut-off functions localized on each concave corner $v^{(j)}$ as 
\[
\phi_{\rho}^{(j)}(x)=\displaystyle \phi\left(\frac{|x-v^{(j)}|}{\rho}\right).
\]
Let $\psi\in\mathcal{D}(D_V^*)$ and define
\begin{equation}\label{psirho}
\psi_{\rho}^{(j)}:=\psi \cdot \phi_{\rho}^{(j)} \quad  \text{and} \quad \chi_1=\psi-\sum_{j\in \operatorname{Conc}(V)}\psi_{\rho}^{(j)}.
\end{equation}
Then $\chi_1\in\D(D_V)$ and
\[
\psi=\chi_1+\sum_{j\in \operatorname{Conc}(V)}\psi_{\rho}^{(j)},
\]
where the series is convergent in $\D(D_V^*)$ if $\operatorname{Conc}(V)$ is infinite.
\end{proposition}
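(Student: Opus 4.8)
The plan is to build the decomposition from the well-separation of the corners and then to reduce the membership $\chi_1\in\D(D_V)$ to a purely local regularity statement away from the concave corners. First I would record the geometric consequence of condition \emph{(iii)} of \Cref{def:infinite-polygon}: since $|v^{(j)}-v^{(k)}|>3\rho$ for $j\neq k$, the balls $B_\rho(v^{(j)})$ are pairwise disjoint, hence the cut-offs $\phi_\rho^{(j)}$ have pairwise disjoint supports and $0\le\sum_{j}\phi_\rho^{(j)}\le 1$. Using the Leibniz rule $\boldsymbol{\sigma}\cdot\nabla(\phi_\rho^{(j)}\psi)=\phi_\rho^{(j)}\,\boldsymbol{\sigma}\cdot\nabla\psi+(\boldsymbol{\sigma}\cdot\nabla\phi_\rho^{(j)})\,\psi$ and $\psi\in\mathcal{K}(\Omega_V)$, each $\psi_\rho^{(j)}$ lies in $\mathcal{K}(\Omega_V)$, and since multiplication by a smooth bounded scalar with bounded gradient preserves both $\mathcal{K}(\Omega_V)$ and the boundary-form condition defining the adjoint, in fact $\psi_\rho^{(j)},\chi_1\in\D(D_V^*)$.

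Next I would establish convergence of the series in the $\D(D_V^*)$-norm, i.e. in $\norm{\cdot}_{\mathcal{K}(\Omega_V)}$. Because the summands have disjoint supports, so do the functions $\boldsymbol{\sigma}\cdot\nabla\psi_\rho^{(j)}$, and for any finite index set $J$ the partial sum $S_J:=\sum_{j\in J}\psi_\rho^{(j)}$ satisfies $\norm{S_J}_{L^2(\Omega_V;\C^2)}^2=\sum_{j\in J}\norm{\psi_\rho^{(j)}}_{L^2(\Omega_V;\C^2)}^2$ and $\norm{\boldsymbol{\sigma}\cdot\nabla S_J}_{L^2(\Omega_V;\C^2)}^2=\sum_{j\in J}\norm{\boldsymbol{\sigma}\cdot\nabla\psi_\rho^{(j)}}_{L^2(\Omega_V;\C^2)}^2$. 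Thus it suffices to show $\sum_j\norm{\psi_\rho^{(j)}}_{L^2}^2<\infty$ and $\sum_j\norm{\boldsymbol{\sigma}\cdot\nabla\psi_\rho^{(j)}}_{L^2}^2<\infty$. The first is bounded by $\sum_j\int_{B_\rho(v^{(j)})}|\psi|^2\le\norm{\psi}_{L^2(\Omega_V;\C^2)}^2$ by disjointness; for the second I split via Leibniz, bound $\sum_j\norm{\phi_\rho^{(j)}\boldsymbol{\sigma}\cdot\nabla\psi}_{L^2}^2\le\norm{\boldsymbol{\sigma}\cdot\nabla\psi}_{L^2(\Omega_V;\C^2)}^2$, and use that $\norm{\nabla\phi_\rho^{(j)}}_\infty=\rho^{-1}\norm{\phi'}_\infty$ is a constant independent of $j$ (all corners share the same $\rho$ and the same $\phi$), so that $\sum_j\norm{(\boldsymbol{\sigma}\cdot\nabla\phi_\rho^{(j)})\psi}_{L^2}^2\le\rho^{-2}\norm{\phi'}_\infty^2\,\norm{\psi}_{L^2(\Omega_V;\C^2)}^2$. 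Hence $\sum_j\norm{\psi_\rho^{(j)}}_{\mathcal{K}(\Omega_V)}^2\le C\norm{\psi}_{\mathcal{K}(\Omega_V)}^2<\infty$, the partial sums are Cauchy in $\mathcal{K}(\Omega_V)$, and the series converges there; since $\psi\in\mathcal{K}(\Omega_V)$, the pointwise identity $\psi=\chi_1+\sum_j\psi_\rho^{(j)}$ upgrades to convergence in $\D(D_V^*)$, which also yields $\chi_1\in\mathcal{K}(\Omega_V)$.

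It then remains to prove $\chi_1\in\D(D_V)$, that is, $\chi_1\in H^1(\Omega_V;\C^2)$ together with the pointwise boundary condition. Writing $\chi_1=g\,\psi$ with $g:=1-\sum_j\phi_\rho^{(j)}$ smooth, bounded, with bounded gradient, one has $g\equiv 0$ on each $B_{\rho/2}(v^{(j)})$, $j\in\operatorname{Conc}(V)$, so $\chi_1$ vanishes in a neighbourhood of every concave corner. The key point is that the only obstruction to membership in $\D(D_V)$ sits at the concave corners: on the interior, along the (straight, zero-curvature) edges, and near each convex vertex, the weak boundary behaviour encoded in $\D(D_V^*)$ is strong enough to force local $H^1$-regularity and the pointwise condition $-i\sigma_3\,\boldsymbol{\sigma}\cdot\nn\,\chi_1=\chi_1$. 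I would make this precise with a locally finite cover of $\operatorname{supp}\chi_1$ by balls that either avoid $\partial\Omega_V$, meet it along a single edge, or are centred at a convex vertex with angle $\omega_j\in(0,\pi)$, and in each case invoke the local regularity theory of the infinite mass operator on smooth boundaries and at convex angles (as in \cite{BenFouStoVDB17,LeTeOurBon18,PizzVDB21}, together with the polar reduction of the appendix, where at a convex corner all $\mathcal{K}$-solutions are $H^1$). Combining the local $H^1$-estimates with the global $L^2$-control of $\chi_1$ and the disjointness of the patches then gives $\chi_1\in H^1(\Omega_V;\C^2)$ and the boundary condition, i.e. $\chi_1\in\D(D_V)$.

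The main obstacle is precisely this last local regularity step: one must verify that away from concave corners the condition $\chi_1\in\D(D_V^*)$ upgrades to genuine $H^1$-regularity and the pointwise boundary condition, and, crucially for the infinite polygon, that the constants in these local estimates do not degenerate as one ranges over the infinitely many convex vertices and edges. The uniform separation $\rho$ and the fact that all edges are straight keep the relevant local model problems within a controlled family, which is what makes a uniform treatment possible; the disjointness of supports established in the first step then prevents any accumulation of these local errors in the global $H^1$- and $\mathcal{K}$-norms.
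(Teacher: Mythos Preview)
Your argument for the convergence of the series $\sum_j\psi_\rho^{(j)}$ in the $\mathcal{K}(\Omega_V)$-norm is correct and matches the paper's.

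For $\chi_1\in\D(D_V)$, however, your direct local-to-global patching carries a real gap. You assert that the local $H^1$-regularity constants at the convex vertices stay bounded because ``the uniform separation $\rho$ and the fact that all edges are straight keep the relevant local model problems within a controlled family.'' But nothing in \Cref{def:infinite-polygon} bounds the convex angles away from $0$ or $\pi$: condition~(ii) only says $\omega_j\neq\pi$, not that $\inf_j\min(\omega_j,\pi-\omega_j)>0$. In the polar model at a convex vertex the smallest spin-orbit eigenvalue is $\tau_0=\pi/(2\omega_j)$, which tends to $1/2$ as $\omega_j\to\pi^-$; this is precisely the borderline at which the elliptic estimate $\|\cdot\|_{H^1}\lesssim\|\cdot\|_{\mathcal{K}}$ may degenerate. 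Your patching would then need a quantitative statement you have not established.

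The paper sidesteps this completely by trading uniform estimates for a closedness argument. It truncates $\chi_1$ to $\chi_{1,R}:=\phi_R\,\chi_1$, which is compactly supported and vanishes near every concave corner; thus only finitely many (convex) corners are involved, and \cite[Theorem~1.6]{PizzVDB21} gives the purely \emph{qualitative} membership $\chi_{1,R}\in\D(D_V)$, with no constant to track. Dominated convergence yields $\chi_{1,R}\to\chi_1$ in $\mathcal{K}(\Omega_V)$, and since $D_V$ is closed (\Cref{prop:sigma.nabla=nabla}), this forces $\chi_1\in\D(D_V)$. In other words, the identity \eqref{eq:sigma.nabla=nabla} on $\D(D_V)$ already encodes the only uniform bound one needs; the truncation-plus-closedness trick lets you invoke it without ever controlling the angle-dependent local constants. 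If you want to salvage your outline, the cleanest fix is to stop after establishing $\chi_{1}\in H^1_{\mathrm{loc}}$ with the boundary condition, and then run exactly this truncation argument.
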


\begin{proof}
Let us firstly show that $\chi_1\in\D(D_V)$. Let $\phi$ be the cut-off function defined in \eqref{cutoff} and for $R>0$, define 
\[
\phi_R(x)=\phi\left(\frac{x}{R}\right)
\quad\text{and}\quad
\chi_{1,R}:=\phi_R\cdot\chi_1.
\]
For any $R>0$, $\chi_{1,R}$ is compactly supported and it is localised far away from any concave corner, then  $\chi_{1,R}\in \mathcal{D}(D_V)$, by \cite[Theorem 1.6]{PizzVDB21}. 
Moreover, by the dominated convergence theorem we get that $\chi_{1,R}\to \chi_1$ and 
$\boldsymbol{\sigma} \cdot\nabla\chi_{1,R}\to \boldsymbol{\sigma} \cdot\nabla\chi_1$ in $L^2(\Omega_V;\C^2)$. 
Then we conclude that $\chi_1\in \D(D_V)$ thanks to \Cref{prop:sigma.nabla=nabla}.

If $\operatorname{Conc}(V)$ is finite, the proof is complete. 
Let us assume now that $\operatorname{Conc}(V)$ is infinite. Then, by construction the series is point-wise convergent. Using the fact that the supports of the functions $\psi_\rho^{(j)}$ are disjoint, we have
\[
\begin{split}
\sum_{j\in \operatorname{Conc}(V)}
\norm{\psi_{\rho}^{(j)}}_{L^2(\Omega_V;\C^2)}^2\
&=
\sum_{j\in \operatorname{Conc}(V)}\norm{\phi_{\rho}^{(j)}\cdot \psi}_{L^2(\Omega_V;\C^2)}^2
\leq 
\sum_{j\in \operatorname{Conc}(V)}\norm{\psi}_{L^2\left(\Omega_V\cap B_{\rho}^{(j)};\C^2\right)}^2
\\
&\leq \norm{\psi}_{L^2(\Omega_V;\C^2)}^2.
\end{split}
\]
Being $L^2(\Omega_V;\C^2)$ a Banach space, this implies that the series is convergent in $L^2(\Omega_V;\C^2)$. Moreover
\[
\begin{split}
\sum_{j\in \operatorname{Conc}(V)}\norm{D_V^* \psi_{\rho}^{(j)}}_{L^2(\Omega_V;\C^2)}^2 
&\leq
C_{\rho}\sum_{j\in \operatorname{Conc}(V)}\left(
\norm{\s\psi}_{L^2(\Omega_V\cap B_{\rho}^{(j)};\C^2)}^2
+\norm{\psi}_{L^2(\Omega_V\cap B_{\rho}^{(j)};\C^2)}^2
\right)
\\
&\leq  
C_{\rho} 
\left(\norm{D_V^*\psi}_{L^2(\Omega_V;\C^2)}^2
+\norm{\psi}_{L^2(\Omega_V;\C^2)}^2\right),
\end{split}
\]
where $C_\rho>0$ only depends on $\rho$.
Thus we can conclude that the series is convergent in $\mathcal{D}(D^*_V)$.
\end{proof}

We now describe $\psi_\rho^{(j)}$ defined in \eqref{psirho}. At this purpose, for sake of completeness, we recall some known results. 
\begin{lemma}\label{definlocal}
Let $\Omega_V$ be defined as in \eqref{definSv}.
For $j\in \operatorname{Conc}(V)$, define the localized Dirac operator on $\Omega_V\cap B_{\rho}(v^{(j)})$ as
\begin{equation}\label{eq:def-D-rho-j}
\mathcal{D}(D_{\rho}^{(j)}):=\{u\in H^1(\Omega_V\cap B_{\rho}(v^{(j)}); \mathbb{C}^2): \ u^E\in \mathcal{D}(D_V) \}\quad\text{and}\quad
     D_{\rho}^{(j)} u:= -i\s u, 
\end{equation}
where $u^{E}$ means the extension by zero of $u$. 
Moreover define
\begin{equation}\label{Nrho}
N_{\rho}^{(j)}:=
\begin{Bmatrix}
w\in \mathcal{D}\left(D_{\rho}^{(j)*}\right):\ \Delta w=0, \\
 -i \sigma_3 \cdot \boldsymbol{\sigma} \cdot \vec{n} \cdot w= w \text{ and } -i \sigma_3 \cdot \boldsymbol{\sigma} \cdot \vec{n} \cdot D_{\rho}^{(j)}w= D_{\rho}^{(j)}w \text{ on } \partial \Omega_V\cap B_{\rho}(v^{(j)})
 \end{Bmatrix}.
\end{equation}
Then
\begin{enumerate}[label={(\roman*)}]
\item $D_{\rho}^{(j)}$ is closed, symmetric and it has closed range.
\item $\operatorname{Ker}\left(\left({D_{\rho}^{(j)}}^*\right)^2\right)=N_{\rho}^{(j)}$.
\item For any $v\in\mathcal{D}(D_{\rho}^{(j)})$
\begin{equation}\label{eq:estimate-bessel-zero}
\norm{D_\rho^{(j)} v}_{L^2(\Omega_V\cap B_{\rho}(v^{(j)}); \mathbb{C}^2)}\geq \frac{2}{\rho}\norm{v}_{L^2(\Omega_V\cap B_{\rho}(v^{(j)}); \mathbb{C}^2)}.
 \end{equation} 
\end{enumerate}
\end{lemma}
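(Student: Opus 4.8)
The statement is a local result about the operator $D_\rho^{(j)}$ on the truncated sector $\Omega_V\cap B_\rho(v^{(j)})$, so the plan is to exploit the fact that near a single corner the geometry is a genuine wedge of opening $\omega_j$ with two straight edges meeting at $v^{(j)}$, together with the circular arc $\partial B_\rho(v^{(j)})$ where the boundary condition built into $\mathcal{D}(D_V)$ (and hence into the extension-by-zero condition $u^E\in\mathcal{D}(D_V)$) is simply the absence of a trace contribution. First I would reduce, without loss of generality, to $v^{(j)}=0$ and $m=0$, as already done at the start of the section, so that the differential expression is exactly $-i\,\boldsymbol{\sigma}\cdot\nabla$ and the relation $\Delta = (\boldsymbol{\sigma}\cdot\nabla)^2$ on $\C^2$-valued functions is available; this identity is what links part (ii) to harmonicity in $N_\rho^{(j)}$.

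\emph{Part (i).} Closedness and symmetry of $D_\rho^{(j)}$ follow the same scheme as \Cref{prop:sigma.nabla=nabla}: on its domain the $D_\rho^{(j)}$-norm and the $H^1$-norm are equivalent (the corner has zero curvature, so \cite[Lemma 2.1]{PizzVDB21} applies to $u^E$), and symmetry comes from the integration-by-parts identity with the infinite mass boundary condition forcing the boundary term on the two straight edges to vanish. To show closed range, the key is the coercivity estimate \eqref{eq:estimate-bessel-zero}: once that lower bound $\norm{D_\rho^{(j)}v}\ge \tfrac{2}{\rho}\norm{v}$ is established, $D_\rho^{(j)}$ is bounded below, hence injective with closed range by the standard argument (a Cauchy sequence of images pulls back to a Cauchy sequence of arguments).

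\emph{Part (iii), the main obstacle.} The estimate \eqref{eq:estimate-bessel-zero} is the crux and I expect it to be the hardest step. The plan is to pass to polar coordinates centered at the corner and use the polar decomposition of the localized operator promised in \ref{sec:polar}: decomposing $v$ into angular modes adapted to the edges, the operator $D_\rho^{(j)}$ acts on each radial channel as a one-dimensional Bessel-type operator on $(0,\rho)$, and the boundary condition on the arc $r=\rho$ (inherited from $\phi(r/\rho)$ vanishing there, or rather from membership in $\mathcal{D}(D_V)$) gives a Dirichlet-type endpoint. The constant $2/\rho$ should emerge as (twice) the first zero of the relevant Bessel function, rescaled by $\rho$; concretely one estimates $\int_0^\rho |f'|^2\,r\,dr \ge (j_\nu/\rho)^2\int_0^\rho|f|^2\,r\,dr$ channel by channel and takes the worst (smallest eigenvalue) channel. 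The delicate point is checking that the angular eigenvalues induced by the infinite mass boundary conditions on the two straight edges are bounded away from zero uniformly, so that no channel degrades the lower bound below $2/\rho$, and that the zero-mode is excluded by the endpoint condition at $r=\rho$.

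\emph{Part (ii).} For the identification $\operatorname{Ker}((D_\rho^{(j)*})^2)=N_\rho^{(j)}$, I would argue both inclusions. If $w\in\operatorname{Ker}((D_\rho^{(j)*})^2)$, then $D_\rho^{(j)*}w\in\operatorname{Ker}(D_\rho^{(j)*})$, and applying $\boldsymbol{\sigma}\cdot\nabla$ twice together with $(\boldsymbol{\sigma}\cdot\nabla)^2=\Delta$ gives $\Delta w=0$; the two boundary conditions in \eqref{Nrho} are then precisely the statement that $w$ and $D_\rho^{(j)}w$ each satisfy the infinite mass condition on the straight part of the boundary, which is exactly membership of $w$ in $\mathcal{D}(D_\rho^{(j)*})$ with $D_\rho^{(j)*}w$ also in that domain. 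Conversely, any $w\in N_\rho^{(j)}$ is harmonic and satisfies both boundary conditions, so $D_\rho^{(j)*}w$ lies in $\mathcal{D}(D_\rho^{(j)*})$ and is annihilated by $D_\rho^{(j)*}$, giving $w\in\operatorname{Ker}((D_\rho^{(j)*})^2)$. The care needed here is to justify the integration-by-parts characterizing $\mathcal{D}(D_\rho^{(j)*})$ and its boundary conditions despite the corner, which again reduces to the zero-curvature structure and the local trace theory already invoked in the excerpt.
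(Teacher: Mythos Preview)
Your plan is correct and mirrors the paper's approach: parts (i) and (ii) are delegated to \cite[Proof of Lemma~2.7]{PizzVDB21}, while (iii) is proved in the appendix exactly as you outline, via the polar decomposition of $D_\rho^{(j)}$ into a direct sum of one-dimensional operators $d_k$ on $H^1_0((0,\rho);\C^2)$ whose bottom eigenvalue on each channel is $(j_{|\tau_k\mp 1/2|,1})^2/\rho^2$. One small correction: the constant $2$ in $2/\rho$ is not ``twice'' a Bessel zero but simply a convenient lower bound, since the worst channel yields $j_{0,1}\approx 2.405>2$ (the paper invokes $j_{\nu,1}^2\ge j_{0,1}^2+\nu^2$ from \cite{MCANN77} and $j_{0,1}^2\approx 5.78>4$).
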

The proof of this result can be find in \cite[Proof of Lemma 2.7]{PizzVDB21}, except for \eqref{eq:estimate-bessel-zero} that is proved in \Cref{sec:polar}.

Before stating the next result, we need to introduce the following set:
\[
\mathcal{N}_{\rho}^{(j)}:=\left\{\phi\left(\frac{|x-v^{(j)}|}{\rho}\right) w^E(x): \ w \in N_{\rho}^{(j)}\right\},
\]
where $N_{\rho}^{(j)}$ is defined in \eqref{Nrho} and $w^E$ means the extension by zero to $\Omega_V$ of $w$.

\begin{proposition}\label{prop:sum-psi-j}
Let $\psi\in \mathcal{D}(D_V^*)$ and for $j\in \operatorname{Conc}(V)$, let $\psi_{\rho}^{(j)}$ be defined as in \eqref{psirho}.
Then there exist $\chi_2\in \D(D_V)$ and $w_{\rho}^{(j)}\in \mathcal{N}_{\rho}^{(j)}$ such that
\[
\sum_{j\in \operatorname{Conc}(V)}\psi_{\rho}^{(j)}=\chi_2+\sum_{j\in \operatorname{Conc}(V)}w_{\rho}^{(j)},
\]
where the series is convergent in $\D(D_V^*)$ if $\operatorname{Conc}(V)$ is infinite.
\end{proposition}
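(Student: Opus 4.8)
The plan is to localise the problem corner by corner and reduce it to an abstract splitting of the adjoint domain of the local operator $D_\rho^{(j)}$. The guiding principle is the elementary fact that if $S$ is a densely defined closed symmetric operator with closed range, then every $v\in\mathcal{D}(S^*)$ decomposes as $v=v_0+w$ with $v_0\in\mathcal{D}(S)$ and $w\in\operatorname{Ker}\bigl((S^*)^2\bigr)$. Indeed, since $\operatorname{ran}(S)$ is closed one has the orthogonal decomposition $L^2=\operatorname{ran}(S)\oplus\operatorname{Ker}(S^*)$; writing $S^*v=Sv_0+k$ with $v_0\in\mathcal{D}(S)$ and $k\in\operatorname{Ker}(S^*)$, the difference $w:=v-v_0$ satisfies $S^*w=k$, whence $(S^*)^2w=0$. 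I would apply this to $S=D_\rho^{(j)}$, which by \Cref{definlocal} is closed, symmetric, has closed range, and satisfies $\operatorname{Ker}\bigl((D_\rho^{(j)*})^2\bigr)=N_\rho^{(j)}$.

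First I would check that the restriction $u:=\left.\psi_\rho^{(j)}\right|_{\Omega_V\cap B_\rho(v^{(j)})}$ belongs to $\mathcal{D}(D_\rho^{(j)*})$ with $D_\rho^{(j)*}u=-i\s u$. This is a routine localisation computation: testing the defining relation of $\mathcal{D}(D_V^*)$ against $\phi_\rho^{(j)}g^E$ for $g\in\mathcal{D}(D_\rho^{(j)})$ and using the product rule, the two cross terms carrying $\s\phi_\rho^{(j)}$ cancel because $-i\s\phi_\rho^{(j)}$ is anti-Hermitian. The abstract splitting then gives $u=u_0^{(j)}+w^{(j)}$ with $u_0^{(j)}\in\mathcal{D}(D_\rho^{(j)})$ and $w^{(j)}\in N_\rho^{(j)}$. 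Since $u=\psi_\rho^{(j)}$ vanishes on $\partial B_\rho(v^{(j)})\cap\Omega_V$ and $u_0^{(j)}$ has vanishing trace there (because $(u_0^{(j)})^E\in\mathcal{D}(D_V)$), also $w^{(j)}$ has vanishing trace on $\partial B_\rho(v^{(j)})$, so its extension by zero $(w^{(j)})^E$ lies in $\mathcal{K}(\Omega_V)$. Writing $(w^{(j)})^E=\phi_\rho^{(j)}(w^{(j)})^E+(1-\phi_\rho^{(j)})(w^{(j)})^E$, I would set $w_\rho^{(j)}:=\phi_\rho^{(j)}(w^{(j)})^E\in\mathcal{N}_\rho^{(j)}$ and absorb the rest into the regular part: the term $(1-\phi_\rho^{(j)})(w^{(j)})^E$ is supported in the annulus $\{\rho/2\le|x-v^{(j)}|\le\rho\}$, where $w^{(j)}$ is harmonic, hence smooth, and still satisfies the boundary conditions, so together with $(u_0^{(j)})^E$ it belongs to $\mathcal{D}(D_V)$.

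It then remains to sum over $j$. Setting $\chi_2:=\sum_{j}\bigl((u_0^{(j)})^E+(1-\phi_\rho^{(j)})(w^{(j)})^E\bigr)$ and comparing with \eqref{psirho}, one obtains $\sum_j\psi_\rho^{(j)}=\chi_2+\sum_j w_\rho^{(j)}$ term by term. For the convergence when $\operatorname{Conc}(V)$ is infinite I would argue exactly as in \Cref{prop:convergnce-in-op-norm}: the supports of the $w_\rho^{(j)}$ are pairwise disjoint, and \eqref{eq:estimate-bessel-zero} — whose constant $2/\rho$ is \emph{uniform} in $j$ — yields $\norm{u_0^{(j)}}\le\tfrac{\rho}{2}\norm{D_\rho^{(j)*}u}$, so that $\norm{w^{(j)}}$ and $\norm{\s w^{(j)}}$ are controlled by the local $\mathcal{D}(D_V^*)$-norm of $\psi$ on $\Omega_V\cap B_\rho(v^{(j)})$. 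Summing the squares and using disjointness gives $\sum_j\norm{w_\rho^{(j)}}_{D_V^*}^2\lesssim\norm{\psi}_{D_V^*}^2<\infty$, so $\sum_j w_\rho^{(j)}$ converges in $\mathcal{D}(D_V^*)$; consequently $\chi_2=\sum_j\psi_\rho^{(j)}-\sum_j w_\rho^{(j)}$ also converges, and since its partial sums lie in the closed subspace $\mathcal{D}(D_V)$ (\Cref{prop:sigma.nabla=nabla}), the limit satisfies $\chi_2\in\mathcal{D}(D_V)$.

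The main obstacle I expect is the \emph{uniformity} of the constants across corners: the whole infinite-sum argument hinges on the lower bound \eqref{eq:estimate-bessel-zero} holding with a constant independent of the aperture $\omega_j$, and on the orthogonal-projection norms entering the abstract splitting being bounded in terms of $\rho$ alone. Establishing the local membership $u\in\mathcal{D}(D_\rho^{(j)*})$ and, above all, verifying that the cut-off re-multiplication sends the singular part cleanly into $\mathcal{N}_\rho^{(j)}$ while leaving a genuinely $H^1$ remainder in $\mathcal{D}(D_V)$ are the delicate bookkeeping points; once these are in place, the convergence is essentially a repetition of \Cref{prop:convergnce-in-op-norm}.
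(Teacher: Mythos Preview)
Your proposal is correct and follows essentially the same route as the paper: the local splitting via the closed-range decomposition $D_\rho^{(j)*}\psi_\rho^{(j)}=D_\rho^{(j)}u_0^{(j)}+\tilde w$ with $\tilde w\in\operatorname{Ker}(D_\rho^{(j)*})$, the identification $\operatorname{Ker}\bigl((D_\rho^{(j)*})^2\bigr)=N_\rho^{(j)}$ from \Cref{definlocal}, and the use of the uniform bound \eqref{eq:estimate-bessel-zero} to sum over $j$ are exactly the paper's steps. The only visible difference is cosmetic: the paper extends $w^{(j)}\in N_\rho^{(j)}$ by zero and declares this to lie in $\mathcal N_\rho^{(j)}$ ``with abuse of notation'', whereas you re-multiply by $\phi_\rho^{(j)}$ and push the annular remainder $(1-\phi_\rho^{(j)})(w^{(j)})^E$ into $\chi_2$ --- your version matches the stated definition of $\mathcal N_\rho^{(j)}$ more literally, but the underlying argument is the same.
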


\begin{proof}
With abuse of notation, we denote by $\psi_\rho^{(j)}={{\psi_\rho^{(j)}}_|}_{\Omega_V\cap B_\rho(v^{(j)})}$. 

By definition $\psi_\rho^{(j)}\in \D\left({D_\rho^{(j)}}^*\right)$.
By \Cref{definlocal}, ${D_{\rho}^{(j)}}^{-1}:\operatorname{Ran}(D_{\rho}^{(j)})\to \D(D_{\rho}^{(j)})$ is well-defined and bounded and $\operatorname{Ran}(D_{\rho}^{(j)})$ is a closed subspace of $L^2(\Omega_V\cap B_\rho(v^{(j)});\C^2)$. We decompose $\s \psi_\rho^{(j)}$ by projecting on this subspace and its orthonormal, that is
\begin{equation}\label{eq:orth-dec}
\s \psi_\rho^{(j)}=\s u_\rho^{(j)}+\tilde w_\rho^{(j)},
\end{equation}
with $u_\rho^{(j)}\in\D(D_\rho^{(j)})$,  and $\tilde w_\rho^{(j)}\in\operatorname{Ran}(D_\rho^{(j)})^\perp=\operatorname{Ker}\left({D_\rho^{(j)}}^*\right)$.
Define $w_\rho^{(j)}:=\psi_\rho^{(j)}-u_\rho^{(j)}\in \D\left({D_\rho^{(j)}}^*\right)$ and then, thanks to \Cref{definlocal} we have that $w_\rho^{(j)}\in N_\rho^{(j)}$. Denoting, with abuse of notation, by $u_\rho^{(j)}$ and $w_\rho^{(j)}$ their extension by zero, we can conclude that 
\[
\psi_\rho^{(j)}=u_\rho^{(j)}+ w_\rho^{(j)},
\]
with $u_\rho^{(j)}\in\D(D_V)$ and $w_\rho^{(j)}\in \mathcal{N}_\rho^{(j)}$.
Thus
\begin{equation}\label{eq:sum-to-split}
\sum_{j\in \operatorname{Conc}(V)}\psi_{\rho}^{(j)}=\sum_{j\in \operatorname{Conc}(V)}\left(u_\rho^{(j)}+w_{\rho}^{(j)}\right).
\end{equation}
If $\operatorname{Conc}(V)$ is finite, the sum can be split. Since $u_\rho^{(j)}\in \D(D_V)$ for any $j\in \operatorname{Conc}(V)$, we easily have that
$\chi_2:=\sum_{j\in \operatorname{Conc}(V)}u_\rho^{(j)}\in \D(D_V)$.
Let us assume that $\operatorname{Conc}(V)$ is infinite. 
In this case, the series in \eqref{eq:sum-to-split} is convergent in $\D(D_V^*)$. 
Let us show that it can be split as the sum of two convergent series.
Indeed, since the decomposition on \eqref{eq:orth-dec} is orthogonal with $\tilde w_{\rho}^{(j)}=\s w_{\rho}^{(j)}$, we have that
$\sum_{j\in \operatorname{Conc}(V)}\s u_\rho^{(j)}$ and $\sum_{j\in \operatorname{Conc}(V)}\s w_{\rho}^{(j)}$ are convergent in $
L^2(\Omega_V;\C^2)$. Applying \eqref{eq:estimate-bessel-zero} we have
\[
\sum_{j\in \operatorname{Conc}(V)}\norm{\s u_\rho^{(j)}}_{L^2(\Omega_V;\C^2)}\geq \frac{2}{\rho}\sum_{j\in \operatorname{Conc}(V)} \norm{u_\rho^{(j)}}_{L^2(\Omega_V;\C^2)}.
\]
Thank to this, we can conclude that that $\sum_{j\in \operatorname{Conc}(V)}u_\rho^{(j)}$ is convergent in $\mathcal{D}(D_V^*)$ and so does $\sum_{j\in \operatorname{Conc}(V)}w\ri$.
To conclude the proof, set $\chi_2:=  \sum_{j\in \operatorname{Conc}(V)}  u\ri\in \D(D_V)$ thanks to \Cref{prop:sigma.nabla=nabla}.
\end{proof}

We have just shown that a function in $\D(D_V^*)$ can be written as the sum of functions in $\mathcal{D}(D_V)$ plus of some harmonic functions localized close to any concave corner that verify some boundary conditions. That is
\begin{equation}\label{eq:almost-done-dec}
\psi=\chi_1+\chi_2+\sum_{j\in \operatorname{Conc}(V)}w_{\rho}^{(j)},
\quad
\chi_1,\chi_2\in\D(D_V),\, w_{\rho}^{(j)}\in\mathcal{N}_{\rho}^{(j)},
\end{equation}
where the series is convergent in $\D(D_V^*)$ if $\operatorname{Conc}(V)$ is infinite.
The final step to prove \Cref{thm:adjoint} is to characterize the functions $w_{\rho}^{(j)}$.

\begin{proof}[\Cref{thm:adjoint}:] 
For any $j\in\text{Conc}(V)$, thanks to \cite[Theorem 3.2]{PizzVDB21} we have that $w_{\rho}^{(j)}$ can be decomposed as the orthogonal sum in $\D(D_V^*)$
\begin{equation}\label{decomwrhoi} 
w\ri=
\chi\ri +
c_j^+\,\varphi_j^+ +c_j^-\,\varphi_j^-,
\end{equation}
where $\chi\ri\in \D(D_V)$, $\varphi_j^+,\varphi_j^-$ are defined in \eqref{definvarphi} and $c_j^+,c_j^-\in\C$. Notice that the statement of \cite[Theorem 3.2]{PizzVDB21} does not specify that this decomposition is orthogonal. Nevertheless,  this is clear from the proof, since the functions in \eqref{decomwrhoi} belong to different eigenspaces.
Then
\begin{equation}
\label{eq:sum-to-split2}
\sum_{j\in \operatorname{Conc}(V)}
w\ri=
\sum_{j\in \operatorname{Conc}(V)}
\left(
\chi\ri +
c_j^+\,\varphi_j^+ +c_j^-\,\varphi_j^-
\right).
\end{equation}
If $\operatorname{Conc}(V)$ is finite we can split the sum. Since $\tilde w\ri\in \D(D_V)$ for any $j\in \operatorname{Conc}(V)$, we easily have that
$\chi_3:=\sum_{j\in \operatorname{Conc}(V)}\tilde{w}_{\rho}^{(j)}\in \D(D_V)$. Thus, thanks to \eqref{eq:almost-done-dec} setting $\psi_0=\chi_1+\chi_2+\chi_3$, the proof is concluded. 
Let us now assume that $\operatorname{Conc}(V)$ is infinite.
In this case, the series in \eqref{eq:sum-to-split2} is convergent in $\D(D_V^*)$. Since the decomposition on \eqref{decomwrhoi} is orthogonal in $\D(D_V^*)$, we have that the series \eqref{eq:sum-to-split2} can be split as
\[
\sum_{j\in \operatorname{Conc}(V)}
w\ri=
\sum_{j\in \operatorname{Conc}(V)}
\chi\ri +
\sum_{j\in \operatorname{Conc}(V)}
\left(
c_j^+\,\varphi_j^+ +c_j^-\,\varphi_j^-
\right),
\]
where both series on the right-hand-side are convergent in $\D(D_V^*)$. 
Then $\chi_3:=\sum_{j\in \operatorname{Conc}(V)}
\chi\ri \in\D(D_V)$ by \Cref{prop:sigma.nabla=nabla}. Thanks to this and \eqref{eq:almost-done-dec}, setting $\psi_0=\chi_1+\chi_2+\chi_3$ the proof is concluded.  
\end{proof}

\section{Description of self-adjoint extensions and proof of \texorpdfstring{\Cref{thmallselfadjointextensions}}{Theorem 1.3}}
\label{sec:quasi-boundary}

In this section we present some self-adjoint extensions of the Dirac operator \eqref{defdomainDV} in an non-degenerate infinite polygon $\Omega_V$. Denote by $\mathcal{G}$, $\tilde{\mathcal{G}}$ the spaces defined in \eqref{eq:g} and \eqref{eq:gtilde}, respectively. We present self-adjoint extensions that are in correspondence with the unitary operators $U\in\mathcal{U}(\mathcal{G})$ that leave $\tilde{\mathcal{G}}$ invariant. We also assume $|\operatorname{Conc}(V)|=+\infty$ since the description of self-adjoint extensions with a finite number of concave corners is well understood (see \cite{PizzVDB21}). At this purpose, we need the following result: 
\begin{lemma}\label{lemma:boundarytriples} Let $\Gamma=(\Gamma_+$, $\Gamma_-)$ be defined as in \eqref{defgamma}. Then,

\begin{enumerate}[label=(\roman*)]
\item\label{item1lemma:boundarytriples} $\textup{Ker}(\Gamma)$ is dense in $L^2(\Omega_V;\C^2)$,
\item\label{item2lemma:boundarytriples} $\textup{Ran}(\Gamma_+)=\mathcal{G}$ and $\textup{Ran}(\Gamma_-)=\tilde{\mathcal{G}}$,
\item \label{item3lemma:boundarytriples}For all $\psi,\tilde{\psi} \in \mathcal{D}(D_V^*)$, the Green identity holds
\[
\langle D_V^* \psi,\tilde{\psi} \rangle_{L^2(\Omega_V;\C^2)}-\langle \psi , D_V^* \tilde{\psi} \rangle_{L^2(\Omega_V;\C^2)}= \langle \Gamma_+\psi,\Gamma_-\tilde{\psi} \rangle_{\mathcal{G}}-\langle \Gamma_-\psi , \Gamma_+ \tilde{\psi} \rangle_{\mathcal{G}}.
\]
\end{enumerate}
\end{lemma}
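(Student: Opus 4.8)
The plan is to verify the three claims about the map $\Gamma = (\Gamma_+, \Gamma_-)$ by exploiting the orthogonal decomposition of $\mathcal{D}(D_V^*)$ established in \Cref{thm:adjoint}, namely that every $\psi \in \mathcal{D}(D_V^*)$ can be written as $\psi = \psi_0 + \sum_{j \in \operatorname{Conc}(V)}(c_j^+ \varphi_j^+ + c_j^- \varphi_j^-)$ with $\psi_0 \in \mathcal{D}(D_V)$. First I would observe that $\operatorname{Ker}(\Gamma)$ consists precisely of those $\psi$ with all coefficients $c_j^\pm = 0$, which by \Cref{thm:adjoint} is exactly $\mathcal{D}(D_V)$. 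Since $D_V$ is a densely defined operator (its domain contains, for instance, $C_c^\infty$ functions supported away from the corners, and more to the point $\mathcal{D}(D_V) \supset C_c^\infty(\Omega_V;\C^2)$), the density of $\operatorname{Ker}(\Gamma) = \mathcal{D}(D_V)$ in $L^2(\Omega_V;\C^2)$ follows immediately, giving \ref{item1lemma:boundarytriples}.

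For the surjectivity statement \ref{item2lemma:boundarytriples}, I would construct, for any prescribed sequence of coefficients, an explicit preimage. Given $\{a_j\}_{j \in \operatorname{Conc}(V)} \in \mathcal{G}$, the function $\sum_j a_j \varphi_j^+$ lies in $\mathcal{D}(D_V^*)$ provided the series converges in the $D_V^*$-norm; here the key quantitative input is the behaviour of $\norm{\varphi_j^+}_{D_V^*}$ and $\norm{\varphi_j^-}_{D_V^*}$ as functions of the angle $\omega_j$. Because $\varphi_j^+ \sim r^{\lambda_j - 1/2}$ with $\lambda_j \in (1/4,1/2)$, the $\varphi_j^+$ have uniformly bounded $D_V^*$-norms, so convergence is controlled by $\sum_j |a_j|^2$ and $\operatorname{Ran}(\Gamma_+) = \mathcal{G}$. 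By contrast $\varphi_j^- \sim r^{-\lambda_j - 1/2}$ is more singular, and a direct computation of $\norm{\varphi_j^-}_{L^2}^2$ and $\norm{\s \varphi_j^-}_{L^2}^2$ near the corner should produce a weight behaving like $(\omega_j - \pi)^{-1}$ as $\omega_j \downarrow \pi$ (i.e.\ $\lambda_j \uparrow 1/2$), which is exactly the weight defining $\ell^2(\tfrac{1}{\omega_j - \pi})$; this pins down $\operatorname{Ran}(\Gamma_-) = \tilde{\mathcal{G}}$. I would then confirm that these target sequences are attained and that $\Gamma_+$, $\Gamma_-$ map onto the asserted spaces, reading off the coefficients via the orthogonality of the decomposition.

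The Green identity \ref{item3lemma:boundarytriples} is the analytic heart. The strategy is to reduce it to a local computation near each concave corner. Using the decomposition $\psi = \psi_0 + \sum_j(c_j^+\varphi_j^+ + c_j^-\varphi_j^-)$ and its analogue for $\tilde\psi$, together with the fact that on $\mathcal{D}(D_V)$ the operator is already symmetric (\Cref{prop:sigma.nabla=nabla}), the boundary-form $\langle D_V^*\psi,\tilde\psi\rangle - \langle \psi, D_V^*\tilde\psi\rangle$ collapses to a sum of pairings among the singular functions $\varphi_j^\pm$ localized at distinct corners. Since the supports of $\phi_\rho^{(j)}$ are disjoint by property (iii) of \Cref{def:infinite-polygon}, cross terms between different corners vanish, and what remains at each corner is a finite-dimensional quadratic form in $(c_j^+, c_j^-)$ and $(\tilde c_j^+, \tilde c_j^-)$. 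I would evaluate this by an integration-by-parts on the annulus $\Omega_V \cap B_\rho(v^{(j)})$, using that $\varphi_j^\pm$ are harmonic and satisfy the infinite-mass conditions so that all genuine boundary contributions on $\partial\Omega_V$ cancel, leaving only the contribution from the circle $r = \rho$ where the cutoff lives — or more cleanly, exploiting the eigenspace structure from \cite[Theorem 3.2]{PizzVDB21} to compute $\langle \s \varphi_j^+, \varphi_j^-\rangle$ type pairings directly.

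The main obstacle I anticipate is the convergence and interchange-of-limits bookkeeping in the infinite-corner case: one must justify term-by-term evaluation of the Green form for the infinite series, and crucially verify that the local pairing at corner $j$ produces exactly the Hermitian form $\langle \Gamma_+\psi, \Gamma_-\tilde\psi\rangle - \langle \Gamma_-\psi, \Gamma_+\tilde\psi\rangle$ with the correct normalization — this is where the factor $i$ in the definition $\Gamma_+\psi = \{i c_j^+\}$ must be tracked so that the cross-pairing $c_j^+ \overline{\tilde c_j^-}$ comes out with the right sign and the diagonal terms $c_j^+\overline{\tilde c_j^+}$ and $c_j^-\overline{\tilde c_j^-}$ cancel. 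The normalization constants in $\Phi_j^\pm$ (the $\tfrac{1}{\sqrt{2\omega_j}}$ factors) are presumably chosen precisely so that this local pairing is $\omega_j$-independent and equals the flat inner product on $\mathcal{G}$, and verifying this explicitly is the computation I would need to carry through carefully.
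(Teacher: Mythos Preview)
Your proposal is correct and follows essentially the same route as the paper's proof: identify $\operatorname{Ker}(\Gamma)=\mathcal{D}(D_V)$ for part (i); compute upper and lower bounds on $\norm{\varphi_j^\pm}_{L^2}^2$ and $\norm{\s\varphi_j^\pm}_{L^2}^2$ (the paper finds $\norm{\varphi_j^\pm}_{L^2}^2\asymp \rho^{1\pm 2\lambda_j}/(1\pm 2\lambda_j)$, which yields exactly the weight $(\omega_j-\pi)^{-1}$ you anticipate) to obtain both inclusions in part (ii); and for part (iii) reduce the boundary form to a sum over corners using symmetry of $D_V$, the adjoint relation for the $\psi_0$--$\varphi_j^\pm$ cross terms, and disjointness of supports, then evaluate $\langle D_V^*\varphi_j^\pm,\varphi_j^\mp\rangle=i/2$ and $\langle D_V^*\varphi_j^\pm,\varphi_j^\pm\rangle=0$ directly via the polar formula \eqref{proof:diracpolar} rather than an integration by parts on the annulus --- your second suggested route.
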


\begin{proof}

In this proof $C$ refers to any positive constant.

First, we prove \emph{\ref{item1lemma:boundarytriples}}.
Let $\psi\in \mathcal{D}(D_V^*)$. 
By \Cref{thm:adjoint}, we have 
\begin{equation}\label{proofpsidense}
\psi =\psi_0+\displaystyle \sum_{j\in \operatorname{Conc}(V)}\left(c_j^{+}\varphi_{j}^+ +c_j^{-}  \varphi_{j}^- \right).
\end{equation}
If $\Gamma \psi =0$, then $c_j^+=c_j^-=0$ for all $j\in \operatorname{Conc}(V)$ and so $\ker(\Gamma)=\D(D_V)$. Since $\mathcal{C}_c^{\infty}(\Omega_V;\mathbb{C}^2)\subset \D(D_V) \subset L^2(\Omega_V,\C^2)$ and $\mathcal{C}_c^{\infty}(\Omega_V;\mathbb{C}^2)$ is a dense subspace of $L^2(\Omega_V;\mathbb{C}^2)$, so does $\D(D_V)$. 
 
Let us prove \emph{\ref{item2lemma:boundarytriples}}.
Let $\psi\in \mathcal{D}(D_V^*)$ and let us decompose it as in \eqref{proofpsidense}.
Since $\{\varphi_j^+,\varphi_j^-\}_{j\in \operatorname{Conc}(V)}$ are orthogonal functions in $L^2(\Omega_V;\C^2)$, we have
\[
\norm{\psi-\psi_0}_{L^2(\Omega_V;\mathbb{C}^2)}^2=
\sum_{j\in \operatorname{Conc}(V)}
\left(
|c_j^{+}|^2
\norm{
\varphi_{j}^+ 
}_{L^2(\Omega_V;\mathbb{C}^2)}^2
+
|c_j^{-}|^2\norm{
\varphi_{j}^- 
}_{L^2(\Omega_V;\mathbb{C}^2)}^2
\right).
\]
Using polar coordinates
\[
\begin{split}
\norm{
\varphi_{j}^\pm
}_{L^2(\Omega_V;\mathbb{C}^2)}^2
&=
\int_0^\rho
r^{\pm 2 \lambda_j}\phi\left(\frac{r}{\rho}\right)^2\,dr
\geq
\int_0^{\rho/2}
r^{\pm 2 \lambda_j}\,dr
=
\frac{(\rho/2)^{1\pm 2\lambda_j}}{1\pm 2\lambda_j}.
\end{split}
\]
Being $2\lambda_j=\pi/\omega_j$ for $\omega_j\in(\pi,2\pi)$, we can conclude that there exists $C_\rho>0$ only depending on $\rho$ such that
\[
\norm{\psi-\psi_0}_{L^2(\Omega_V;\mathbb{C}^2)}^2\geq 
C_\rho
\sum_{j\in \operatorname{Conc}(V)}
\left(
|c_j^+|^2+\frac{|c_j^-|^2}{\omega_j-\pi}
\right).
\]
This proves that $\Gamma_+\psi \in \mathcal{G}$ and $\Gamma_-\psi \in \tilde{\mathcal{G}}$. Let us prove the other inclusion. Let $\{c_j^+\}_{j\in \operatorname{Conc}(V)}\in\mathcal{G}$ and $\{c_j^-\}_{j\in \operatorname{Conc}(V)}\in\tilde{\mathcal{G}}$ and define
\[
\psi := \sum_{j\in \operatorname{Conc}(V)}\left(c_j^{+}\varphi_{j}^+ +c_j^{-}  \varphi_{j}^- \right).
\]
Reasoning as above, we have that 
\[
\norm{
\varphi_{j}^\pm
}_{L^2(\Omega_V;\mathbb{C}^2)}^2
\leq 
\frac{\rho^{1\pm 2\lambda_j}}{1\pm 2\lambda_j}.
\]
Then
\[
\norm{\psi}_{L^2(\Omega_V;\mathbb{C}^2)}^2 
\leq 
C_\rho
\sum_{j\in \operatorname{Conc}(V)} 
\left(
|c_j^+|^2
+
\frac{|c_j^-|^2}{\omega_j-\pi}
\right),
\]
with $C_\rho>0$ only depending on $\rho$. Then $\psi\in L^2(\Omega_V;\C^2)$. Let us prove that $\s \psi \in L^2(\Omega_V;\C^2)$. Since the support of the functions $\{\varphi_j^+,\varphi_j^-\}_{j\in\text{Conc}(V)}$ are disjoint, we have 
\[
\norm{\s \psi}_{L^2(\Omega_V;\mathbb{C}^2)}^2
= \sum_{j\in \operatorname{Conc}(V)}\norm{D_V^*\left(c_j^+\varphi_j^++c_j^-\varphi_j^-\right)}_{L^2(\Omega_V;\mathbb{C}^2)}^2.
\]
By \eqref{eq:compute.HV}, in polar coordinates we get
\begin{equation}\label{proof:diracpolar}
D_V^*\left(c_j^+\varphi_j^+(r,\theta)+c_j^-\varphi_j^-(r,\theta)\right)
=
-\frac{1}{\rho}\phi'\left(\frac{r}{\rho}\right)
\left(
c_j^+r^{\lambda_j-1/2}\,\Phi_j^-(\theta)+
c_j^-r^{-\lambda_j-1/2}\,\Phi_j^+(\theta)
\right).
\end{equation}
Therefore, since $\Phi_j^+$ and $\Phi_j^-$ are orthogonal
\[
\begin{split}
\norm{D_V^*\left(c_j^+\varphi_j^++c_j^-\varphi_j^-\right)}_{L^2(\Omega_V;\mathbb{C}^2)}^2
&=
\frac{1}{\rho}
\int_{\rho/2}^\rho
|c_j^+|^2
r^{2\lambda_j}\phi'\left(\frac{r}{\rho}\right)^2+
|c_j^-|^2
r^{-2\lambda_j}\phi'\left(\frac{r}{\rho}\right)^2\,dr
\\
&
\leq 
C_\rho\left(
\frac{|c_j^+|^2}{1+2\lambda_j}+\frac{|c_j^-|^2}{1-2\lambda_j}
\right),
\end{split}
\]
being $C_\rho>0$ only depending on $\rho$. Thus, replacing the value of $\lambda_j$ we get
\[
\norm{\s \psi}_{L^2(\Omega_V;\mathbb{C}^2)}^2
\leq 
C_\rho
\sum_{j\in \operatorname{Conc}(V)}
\left(
|c_j^+|^2+\frac{|c_j^-|^2}{\omega_j-\pi}.
\right).
\]
Given that $\{c_j^+\}_{j\in \operatorname{Conc}(V)}\in\mathcal{G}$ and $\{c_j^-\}_{j\in \operatorname{Conc}(V)}\in\tilde{\mathcal{G}}$, we have that $\s \psi \in L^2(\Omega_V;\C^2)$. 
Finally, since $D_V^*$ is a closed operator and $\varphi_j^+, \varphi_j^-\in\D(D_V^*)$, we get $\psi\in \D(D_V^*)$. 

To conclude the proof, we prove \emph{\ref{item3lemma:boundarytriples}}. 
Let $\psi,\tilde\psi \in \mathcal{D}(D_V^*)$. By \Cref{thm:adjoint},
\[
\psi =\psi_0+\displaystyle \sum_{j\in \operatorname{Conc}(V)}\left(c_j^{+}\varphi_{j}^+ +c_j^{-}  \varphi_{j}^- \right), \ \tilde \psi =\tilde \psi_0+\displaystyle \sum_{j\in \operatorname{Conc}(V)}\left(\tilde{c_j}^+\varphi_{j}^+ +\tilde{c_j}^-  \varphi_{j}^- \right).
\]
Since $\psi_0,\tilde{\psi}_0\in \D(D_V)$, $\varphi_{j}^+,\varphi_{j}^-\in \D(D_V^*)$, the support of the functions $\{\varphi_j^+, \varphi_j^-\}_{j\in\text{Conc}(V)}$ are disjoint and due to the symmetry of $D_V$, we obtain
\[
\begin{split}
\langle 
\D_V^*\psi,\tilde \psi 
\rangle_{L^2(\Omega_V;\C^2)}
-&
\langle
\psi, D_V^* \tilde\psi
\rangle_{L^2(\Omega_V;\C^2)}
\\
=& \sum_{j\in \operatorname{Conc}(V)} \langle  D_V^*\left(c_j^{+}\varphi_{j}^+ +c_j^{-}  \varphi_{j}^-\right),\tilde{c_j}^+\varphi_{j}^+ +\tilde{c_j}^-  \varphi_{j}^- \rangle_{L^2(\Omega_V;\C^2)}\\
&+ \sum_{j\in \operatorname{Conc}(V)} \langle c_j^{+}\varphi_{j}^+ +c_j^{-}  \varphi_{j}^-, D_V^*\left(\tilde{c_j}^+\varphi_{j}^+ +\tilde{c_j}^-  \varphi_{j}^- \right)\rangle_{L^2(\Omega_V;\C^2)}.
\end{split}
\]
Thanks to \eqref{proof:diracpolar} and since $\Phi_j^+$ and $\Phi_j^-$ are orthonormal functions, it follows that
\[
\begin{split}
\langle D_V^*\varphi_j^+, \varphi_j^+ \rangle_{L^2(\Omega_V;\C^2)}&=\langle D_V^*\varphi_j^-, \varphi_j^- \rangle_{L^2(\Omega_V;\C^2)}=0,\\
\langle
D_V^*\varphi_j^+, \varphi_j^- \rangle_{L^2(\Omega_V;\C^2)}&=\langle
D_V^*\varphi_j^-, \varphi_j^+ \rangle_{L^2(\Omega_V;\C^2)}=\frac{i}{2}.
\end{split}
\]
Finally, we get 
\[\begin{split}
\langle \D_V^*\psi,\tilde \psi \rangle_{L^2(\Omega_V;\C^2)} -\langle \psi, D_V^* \tilde\psi\rangle_{L^2(\Omega_V;\C^2)} & = \sum_{j\in \operatorname{Conc}(V)} \left(ic_j^+\overline{\tilde{c_j}^-}+i c_j^-\overline{\tilde{c_j}^+}\right) \\
&=  \langle \Gamma_+\psi,\Gamma_- \tilde \psi \rangle_{\mathcal{G}} -\langle \Gamma_-\psi, \Gamma_+ \tilde\psi\rangle_{\mathcal{G}}.
\end{split}
\]
\end{proof}

Before proving \Cref{thmallselfadjointextensions}, we need the following result.

\begin{lemma}\label{lemma: gammaU}
Let $\mathcal{D}(T_U)$ be defined as in \eqref{definTu}. Then, for any $\psi\in \mathcal{D}(T_U)$, there exists a unique $\mathfrak{g}\in \mathcal{G}$ such that
\begin{equation}\label{eq:Gamma}
\Gamma\psi:=(\Gamma_+\psi,\Gamma_-\psi)=\left((\mathbbm{1}+U)\mathfrak{g};  \ i(\mathbbm{1}-U)\mathfrak{g}\right).
\end{equation}
Moreover, the map $\mathfrak{G}:\psi\in \mathcal{D}(T_U)\mapsto \mathfrak{g}\in \mathcal{G}$ such that \eqref{eq:Gamma} is verified has dense range. 
\end{lemma}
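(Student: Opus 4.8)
The plan is to solve the linear system \eqref{eq:Gamma} explicitly. Writing the two required identities $\Gamma_+\psi=(\mathbbm{1}+U)\mathfrak{g}$ and $\Gamma_-\psi=i(\mathbbm{1}-U)\mathfrak{g}$ and eliminating the $U\mathfrak{g}$ term by addition suggests the candidate $\mathfrak{g}:=\tfrac12\bigl(\Gamma_+\psi-i\,\Gamma_-\psi\bigr)$. First I would check that this is a well-defined element of $\mathcal{G}$: by \Cref{lemma:boundarytriples} one has $\Gamma_+\psi\in\mathcal{G}$ and $\Gamma_-\psi\in\tilde{\mathcal{G}}\subset\mathcal{G}$, so the combination lies in $\mathcal{G}$.

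Next I would verify that this $\mathfrak{g}$ indeed realizes \eqref{eq:Gamma}. Expanding $(\mathbbm{1}+U)\mathfrak{g}$ and $i(\mathbbm{1}-U)\mathfrak{g}$ and using that $U$ commutes with $\mathbbm{1}$, each expression reduces—after invoking the defining relation $i(\mathbbm{1}-U)\Gamma_+\psi=(\mathbbm{1}+U)\Gamma_-\psi$ of $\mathcal{D}(T_U)$ from \eqref{definTu}—to $\Gamma_+\psi$ and $\Gamma_-\psi$ respectively; this is the only place where the membership $\psi\in\mathcal{D}(T_U)$, as opposed to merely $\psi\in\mathcal{D}(D_V^*)$, enters. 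Uniqueness is then immediate: if some $\mathfrak{g}$ satisfies \eqref{eq:Gamma}, adding the two identities forces $2\mathfrak{g}=\Gamma_+\psi-i\,\Gamma_-\psi$, so $\mathfrak{g}$ is determined by $\psi$.

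For the dense range I would first identify $\operatorname{Ran}(\mathfrak{G})$ exactly. Since $\Gamma_-\psi\in\tilde{\mathcal{G}}$ for every $\psi\in\mathcal{D}(D_V^*)$ by \Cref{lemma:boundarytriples}, the identity $\Gamma_-\psi=i(\mathbbm{1}-U)\mathfrak{g}$ forces $(\mathbbm{1}-U)\mathfrak{g}\in\tilde{\mathcal{G}}$, giving $\operatorname{Ran}(\mathfrak{G})\subseteq\{\mathfrak{g}\in\mathcal{G}:(\mathbbm{1}-U)\mathfrak{g}\in\tilde{\mathcal{G}}\}$. For the reverse inclusion, given such a $\mathfrak{g}$ I would prescribe the coefficients $\{c_j^+\}=-i(\mathbbm{1}+U)\mathfrak{g}\in\mathcal{G}$ and $\{c_j^-\}=i(\mathbbm{1}-U)\mathfrak{g}\in\tilde{\mathcal{G}}$ and invoke the constructive part of \Cref{lemma:boundarytriples} (the proof of surjectivity of the traces) to produce a single $\psi\in\mathcal{D}(D_V^*)$ with $\Gamma_+\psi=(\mathbbm{1}+U)\mathfrak{g}$ and $\Gamma_-\psi=i(\mathbbm{1}-U)\mathfrak{g}$. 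A direct computation using $(\mathbbm{1}-U)(\mathbbm{1}+U)=(\mathbbm{1}+U)(\mathbbm{1}-U)$ then shows this $\psi$ satisfies the constraint in \eqref{definTu}, so $\psi\in\mathcal{D}(T_U)$ and $\mathfrak{G}\psi=\mathfrak{g}$.

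Finally, density follows because the invariance hypothesis $U(\tilde{\mathcal{G}})\subset\tilde{\mathcal{G}}$ gives $(\mathbbm{1}-U)\mathfrak{g}\in\tilde{\mathcal{G}}$ for every $\mathfrak{g}\in\tilde{\mathcal{G}}$, whence $\tilde{\mathcal{G}}\subseteq\operatorname{Ran}(\mathfrak{G})$, and $\tilde{\mathcal{G}}$ is dense in $\mathcal{G}$ since it contains the finitely supported sequences. The main obstacle is precisely this last part: when $\tilde{\mathcal{G}}\subsetneq\mathcal{G}$ the map $\mathfrak{G}$ fails to be surjective—reflecting that $(\mathcal{G},\Gamma_+,\Gamma_-)$ is only a \emph{generalized} boundary triple—so one must argue density through the $U$-invariant dense subspace $\tilde{\mathcal{G}}$ rather than expect surjectivity, and care is needed to ensure the trace-realization step yields one $\psi$ matching both prescribed traces simultaneously.
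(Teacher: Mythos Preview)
Your argument is correct, and for existence/uniqueness it is cleaner than the paper's. The paper does \emph{not} write down the explicit solution $\mathfrak{g}=\tfrac12(\Gamma_+\psi-i\,\Gamma_-\psi)$; instead it defines the subspaces $\Lambda:=\{((\mathbbm{1}+U)\mathfrak{g},i(\mathbbm{1}-U)\mathfrak{g}):\mathfrak{g}\in\mathcal{G}\}$ and $\Pi:=\{(\mathfrak{g}_1,\mathfrak{g}_2):i(\mathbbm{1}-U)\mathfrak{g}_1=(\mathbbm{1}+U)\mathfrak{g}_2\}$, observes that $\Lambda$ is closed (via the closed map theorem, using unitarity of $U$), decomposes any element of $\Pi$ orthogonally along $\Lambda\oplus\Lambda^\perp$, and shows $\Lambda^\perp\cap\Pi=\{0\}$ by a short linear-algebra computation. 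Uniqueness is then argued separately by adding and subtracting the two defining equations. Your direct formula sidesteps the closed-subspace machinery entirely and makes uniqueness automatic; the paper's route, on the other hand, exhibits the structural identity $\Lambda=\Pi$ without ever needing to guess a candidate. For the dense-range part the two arguments are essentially the same: both rely on the $U$-invariance of $\tilde{\mathcal{G}}$ together with the explicit construction $\psi=\sum_j(c_j^+\varphi_j^++c_j^-\varphi_j^-)$ from \Cref{lemma:boundarytriples} to realize any $\mathfrak{g}\in\tilde{\mathcal{G}}$, and then invoke the density of $\tilde{\mathcal{G}}$ in $\mathcal{G}$. Your extra observation that in fact $\operatorname{Ran}(\mathfrak{G})=\{\mathfrak{g}\in\mathcal{G}:(\mathbbm{1}-U)\mathfrak{g}\in\tilde{\mathcal{G}}\}$ is a nice sharpening that the paper does not state.
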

\begin{proof}
Define 
\[
\Lambda:=\{\left((\mathbbm{1}+U)\mathfrak{g};  \ i(\mathbbm{1}-U)\mathfrak{g}\right): \ \mathfrak{g}\in\mathcal{G}\}\subset \mathcal{G}\oplus \mathcal{G},
\]
and 
\[
\Pi:=\{(\mathfrak{g}_1,\mathfrak{g}_2)\in \mathcal{G}\oplus \mathcal{G}: \  i(1-U)\mathfrak{g}_1= (1+U)\mathfrak{g}_2\}.
\]
Let $\psi\in \mathcal{D}(T_U)$. By construction, $\Gamma \psi \in \Pi$.
To prove that $\Gamma\psi\in\Lambda$,  we show that $\Pi=\Lambda$.
Indeed, by definition we have $\Lambda\subset\Pi$.  For the other inclusion, let $(\mathfrak{g}_1,\mathfrak{g}_2)\in \Pi$. Note that since $U$ is unitary, then $\Lambda$ is closed by the closed map theorem. Therefore we can write
\[
(\mathfrak{g}_1,\mathfrak{g}_2)= \underset{\in \Lambda}{(\mathfrak{h}_1,\mathfrak{h}_2)}+\underset{\in \Lambda^{\perp}}{(\mathfrak{f}_1,\mathfrak{f}_2)}.
\]
Since $\Lambda\subset\Pi$ we get
\[
\Pi \ni (\mathfrak{g}_1,\mathfrak{g}_2)- (\mathfrak{h}_1,\mathfrak{h}_2)=(\mathfrak{f}_1,\mathfrak{f}_2) \in \Lambda^{\perp},
\] 
thus $(\mathfrak{f}_1,\mathfrak{f}_2) \in \Lambda^{\perp}\cap \Pi$. Let us show that $(\mathfrak{f}_1,\mathfrak{f}_2)=0$. Indeed, for all $\mathfrak{g}\in \mathcal{G}$ we have
\[
\begin{split}0 & = \langle \mathfrak{f}_1, (\mathbbm{1}+U)\mathfrak{g} \rangle_{\mathcal{G}}+\langle \mathfrak{f}_2, i(\mathbbm{1}-U)\mathfrak{g} \rangle_{\mathcal{G}}\\
&= \langle U(\mathfrak{f}_1-i\mathfrak{f}_2), U\mathfrak{g} \rangle_{\mathcal{G}}+\langle \mathfrak{f}_1+i\mathfrak{f}_2, U\mathfrak{g} \rangle_{\mathcal{G}},
\end{split}
\]
which directly implies that $U(\mathfrak{f}_1-i\mathfrak{f}_2)+\mathfrak{f}_1+i\mathfrak{f}_2=0$. We also have that $(\mathfrak{f}_1,\mathfrak{f}_2)\in \Pi$, so the following  must be satisfied
\[
\begin{cases}
U(\mathfrak{f}_1-i\mathfrak{f}_2)+\mathfrak{f}_1+i\mathfrak{f}_2=0, \\
U(-\mathfrak{f}_1+i\mathfrak{f}_2)+\mathfrak{f}_1+i\mathfrak{f}_2=0.
\end{cases}
\]
Summing and subtracting these equations, and because of the injectivity of $U$, we get
\[
\begin{cases}
\mathfrak{f}_1+i\mathfrak{f}_2=0, \\
\mathfrak{f}_1-i\mathfrak{f}_2=0.
\end{cases}
\]
Then we end with $\mathfrak{f}_1=\mathfrak{f}_2=0$. Therefore, $(\mathfrak{g}_1,\mathfrak{g}_2)\in\Lambda$ and so $\Pi=\Lambda$.

Let us now assume that there exist $\mathfrak{g}_1,\mathfrak{g}_2\in \mathcal{G}$ with $\mathfrak{g}_1\neq \mathfrak{g}_2$ satisfying
\[
\Gamma\psi=\left((\mathbbm{1}+U)\mathfrak{g}_1;  \ i(\mathbbm{1}-U)\mathfrak{g}_1\right)=\left((\mathbbm{1}+U)\mathfrak{g}_2;  \ i(\mathbbm{1}-U)\mathfrak{g}_2\right).
\]
Then 
\[
\begin{cases}
(\mathbbm{1}+U)\mathfrak{g}_1=(\mathbbm{1}+U)\mathfrak{g}_2, \\
(\mathbbm{1}-U)\mathfrak{g}_1=(\mathbbm{1}-U)\mathfrak{g}_2.
\end{cases}
\]
Summing the two equations we get the contradiction $\mathfrak{g}_1=\mathfrak{g}_2$. Therefore the map $A:\psi\in\D(T_U)\mapsto \mathfrak{g}\in\mathcal{G}$ is well defined.

Let us finally prove that $\mathfrak{G}$ has dense range. 
Let $\mathfrak{g}\in \mathcal{G}$, being $\tilde{\mathcal{G}}$ dense in $\mathcal{G}$, there exists a sequence $\{\mathfrak{g}_n\}_{n}\subset \tilde{\mathcal{G}}$ such that  $\mathfrak{g}_n\to \mathfrak{g}$ in $\mathcal{G}$ for $n\to +\infty$. 
Since $U( \tilde{\mathcal{G}})\subset \tilde{\mathcal{G}}$, then $((\mathbbm{1}+U)\mathfrak{g}_n,i(\mathbbm{1}-U)\mathfrak{g}_n)\in\tilde{\mathcal{G}}\oplus\tilde{\mathcal{G}}$. 
Denoting $a_n:=(\mathbbm{1}+U)\mathfrak{g}_n=\{a_{n,j}\}_{j\in \operatorname{Conc}(V)}$ and $b_n:=(\mathbbm{1}-U)\mathfrak{g}_n=\{b_{n,j}\}_{j\in \operatorname{Conc}(V)}$, for any $n\in\mathbb{N}$ define
\[
\psi_n:=\sum_{j\in \operatorname{Conc}(V)} \left(-ia_{n,j} \varphi_j^+ + ib_{n,j} \varphi_j^- \right),
\]
where $\varphi_j^+, \varphi_j^-$ are defined in \eqref{definvarphi}. Then $\mathfrak{G}(\psi_n)=\mathfrak{g}_n$, and since $\mathfrak{g}_n\to \mathfrak{g}$ when $n\to +\infty$, this concludes the proof.

\end{proof}

We are ready now to prove the main result about the classification of the self-adjoint extensions.

\begin{proof}[Proof of \Cref{thmallselfadjointextensions}]

We start proving the symmetry of $T_U$. Let $\psi, \tilde{\psi}\in\mathcal{D}(T_U)$. Using \Cref{lemma: gammaU}, there exists $\mathfrak{g}_1,\mathfrak{g}_2\in \mathcal{G}$ such that 
\[
\Gamma \psi=((\mathbbm{1}+U)\mathfrak{g}_1; \ i(\mathbbm{1}-U)\mathfrak{g}_1)); \ \Gamma \tilde \psi=((\mathbbm{1}+U)\mathfrak{g}_2; \ i(\mathbbm{1}-U)\mathfrak{g}_2)).
\]
Thus, 
\[\begin{split}
\langle D_V^*\psi, \tilde \psi \rangle_{L^2(\Omega_V;\C^2)}-\langle \psi, D_V^* \tilde \psi \rangle_{L^2(\Omega_V;\C^2)}  = & \langle \Gamma_+\psi, \Gamma_-\tilde \psi \rangle_{\mathcal{G}}-\langle \Gamma_-\psi, \Gamma_+ \tilde \psi \rangle_{\mathcal{G}} \\
= & \langle \mathfrak{g}_1, i \mathfrak{g}_2 \rangle_{\mathcal{G}}-\langle U\mathfrak{g}_1, i U\mathfrak{g}_2 \rangle_{\mathcal{G}}+\langle U\mathfrak{g}_1, i \mathfrak{g}_2 \rangle_{\mathcal{G}}-\langle \mathfrak{g}_1, i U\mathfrak{g}_2 \rangle_{\mathcal{G}} \\
& -\langle i\mathfrak{g}_1, \mathfrak{g}_2 \rangle_{\mathcal{G}}+\langle iU\mathfrak{g}_1, U \mathfrak{g}_2 \rangle_{\mathcal{G}}+\langle iU \mathfrak{g}_1, \mathfrak{g}_2 \rangle_{\mathcal{G}} -\langle i\mathfrak{g}_1, U\mathfrak{g}_2 \rangle_{\mathcal{G}} \\
= & 0 ,
\end{split}
\]
where we have used that $U$ is unitary.
Finally, we prove that $T_U$ is self-adjoint. Let $\tilde{\psi}\in\mathcal{D}(T_U^*)\subset \D(D_V^*)$. Then for all $\psi\in\mathcal{D}(T_U)$ and using \Cref{lemma: gammaU}, we have
\[\begin{split}
 0 & = \langle \Gamma_+\tilde{\psi}, \Gamma_- \psi \rangle_{\mathcal{G}}-\langle \Gamma_-\tilde{\psi}, \Gamma_+  \psi \rangle_{\mathcal{G}} \\
& = \langle \Gamma_+\tilde{\psi},i(\mathbbm{1}-U)\mathfrak{g}_1 \rangle_{\mathcal{G}}-\langle \Gamma_-\tilde{\psi}, (\mathbbm{1}+U)\mathfrak{g}_1 \rangle_{\mathcal{G}} \\
& = \mig{\langle -i(\mathbbm{1}-U^*)\Gamma_+\tilde{\psi} -(\mathbbm{1}+U^*)\Gamma_-\tilde{\psi}, \mathfrak{g}_1\rangle_{\mathcal{G}}.}
\end{split}
\]
Since the map $A:\psi\in \mathcal{D}(T_U)\mapsto \mathfrak{g}_1\in \mathcal{G}$ has dense range by \Cref{lemma: gammaU}, we conclude that $i(\mathbbm{1}-U)\Gamma_+\tilde{\psi} =(\mathbbm{1}+U)\Gamma_-\tilde{\psi}$ and so $\tilde{\psi}\in\mathcal{D}(T_U)$. 
\end{proof}

\section{Distinguished self-adjoint extension}
\label{sec:distinguished}

In this section we are interested in giving a detailed description of the distinguished self-adjoint extension of the Dirac operator $D_V$.
We want to characterize it as the unique extension whose domain is included in the Sobolev space $H^{1/2}(\Omega_V;\C^2)$. 

By \Cref{thm:adjoint} we have that, if $\psi\in \mathcal{D}(D_V^*)$, then
\[
\psi =\psi_0+\sum_{j\in \operatorname{Conc}(V)}\left(c_j^{+}\varphi_{j}^+ +c_j^{-}  \varphi_{j}^- \right),
\] 
where the series is convergent if $\operatorname{Conc}(V)$ is infinite.
By \cite[Theorem 1.4.5.3]{grisvard} we have that $\varphi_j^+\in H^{1/2}(\Omega_V;\C^2)$ but $\varphi_j^-\notin H^{1/2}(\Omega_V;\C^2)$.
Thus, to have $\psi\in H^{1/2}(\Omega_V;\C^2)$ we must impose  $c_j^-=0$ for all $j\in \operatorname{Conc}(V)$. 
In the case that $\operatorname{Conc}(V)$ is finite we directly have that $\psi \in H^{1/2}(\Omega_V;\C^2)$. 
In the other case, this becomes a more delicate issue and it is the goal of this section.
For this reason, we assume that $\operatorname{Conc}(V)$ is infinite.
We need the following result before proving \Cref{thmH12}.

\begin{lemma}\label{lemma: inequality_sen}
Let $\Omega_V$ be a non-degenerate infinite polygon. For $j\in \operatorname{Conc}(V)$, define $\Omega_{\rho}^{(j)}:=\Omega_V\cap B_{\rho}(v^{(j)})$ and
\begin{equation}\label{eq: cj}
S_j:=\begin{cases}
1/2 & \text{ if } \ \omega_j\in(\pi,3\pi/2],\\
|\sin(\omega_j)|/2 & \text{ if } \  \omega_j\in(3\pi/2,2\pi).
\end{cases}
\end{equation}
Then, for any $x,y\in\Omega_{\rho}^{(j)}$ such that $|x-y|\leq S_j|x|$ we have 
\[
[x,y]:=\{tx+(1-t)y\in\R^2:t\in[0,1]\}\subset \Omega_{\rho}^{(j)}.
\]
\end{lemma}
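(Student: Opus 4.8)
The plan is to reduce the statement to a single corner and then to a short planar optimization. Throughout I translate so that $v^{(j)}$ is the origin, so that $|x|=|x-v^{(j)}|$ is the distance to the corner, as used in the statement.

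\emph{Reduction to avoiding the complementary cone.} The separation condition $|v^{(j)}-v^{(k)}|>3\rho$ guarantees that $B_\rho(v^{(j)})$ contains no other vertex and meets $\partial\Omega_V$ only along the two edges issuing from $v^{(j)}$. Hence $\Omega_\rho^{(j)}=\mathcal{W}\cap B_\rho(v^{(j)})$, where $\mathcal{W}$ is the open infinite wedge of opening $\omega_j$ at the origin obtained by extending those two edges. Since $v^{(j)}$ is concave we have $\omega_j\in(\pi,2\pi)$, so the complement $\mathcal{C}:=\R^2\setminus\mathcal{W}$ is a \emph{closed convex cone} of opening $\alpha:=2\pi-\omega_j\in(0,\pi)$ with apex at the origin. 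As $B_\rho(v^{(j)})$ is convex and $x,y\in B_\rho(v^{(j)})$, we have $[x,y]\subset B_\rho(v^{(j)})$; therefore it suffices to prove $[x,y]\cap\mathcal{C}=\emptyset$, i.e.\ $[x,y]\subset\mathcal{W}$.

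\emph{Geometry of a crossing.} I argue by contradiction. If $[x,y]$ met $\mathcal{C}$, then—$\mathcal{C}$ being convex with $x,y\notin\mathcal{C}$—the segment would enter and leave $\mathcal{C}$ through $\partial\mathcal{C}=R_+\cup R_-$, the two bounding rays. A straight line meets the supporting line of each ray at most once, so the entry and exit points lie one on each ray: after possibly swapping $x$ and $y$, say $p\in R_+$ and $q\in R_-$, met in the order $x,p,q,y$. Writing $u_\pm$ for the unit directions of $R_\pm$, so that $u_+\cdot u_-=\cos\alpha$, set $p=s\,u_+$ and $q=t\,u_-$ with $s,t\ge0$, and $x=p+\lambda(p-q)$, $y=q+\nu(q-p)$ with $\lambda,\nu\ge0$. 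Then $x=(1+\lambda)p-\lambda q$ and $|x-y|=(1+\lambda+\nu)\,|p-q|\ge(1+\lambda)\,|p-q|$, so $\nu=0$ is the worst case.

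\emph{The estimate.} By homogeneity I may take $s=1$, so that $|p-q|^2=1+t^2-2t\cos\alpha$ and $|x|^2=(1+\lambda)^2+\lambda^2t^2-2\lambda(1+\lambda)t\cos\alpha$. The claim then reduces to the polynomial inequality
\[
(1+\lambda)^2\,(1+t^2-2t\cos\alpha)\ \ge\ (2S_j)^2\left((1+\lambda)^2+\lambda^2t^2-2\lambda(1+\lambda)t\cos\alpha\right)
\]
for all $\lambda,t\ge0$, which I would verify by writing the difference of the two sides as a quadratic $At^2+Bt+C$ in $t$ and checking that $A>0$ and $B^2-4AC\le0$. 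The sign of $\cos\alpha$ yields the two regimes: when $\omega_j\le 3\pi/2$ (so $\cos\alpha\le0$) the inequality holds with $2S_j=1$, i.e.\ $|x-y|\ge|x|$, the extremal configuration degenerating to a chord through the apex; when $\omega_j>3\pi/2$ (so $\cos\alpha>0$) the sharp value is $2S_j=\sin\alpha=|\sin\omega_j|$, realized by the perpendicular dropped from a point of one ray onto the opposite ray. In either case $|x-y|\ge 2S_j|x|>S_j|x|$, contradicting the hypothesis $|x-y|\le S_j|x|$. Hence $[x,y]\cap\mathcal{C}=\emptyset$ and $[x,y]\subset\Omega_\rho^{(j)}$.

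\emph{Main obstacle.} The essential difficulty is that $\Omega_\rho^{(j)}$ is non-convex precisely at the concave corner, so no one-line convexity argument applies; the real content is the optimization of the length of a cone-crossing segment relative to $|x|$. Its two regimes—an interior chord perpendicular to one edge, versus a chord forced to pass near the apex of the wide cone—are exactly what split the definition of $S_j$ at $\omega_j=3\pi/2$, and the prefactor $1/2$ is a harmless relaxation of the sharp constant that additionally keeps $S_j\le1/2$ for later use.
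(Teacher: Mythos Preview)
Your argument is correct and reaches the desired contradiction via the bound $|x-y|\ge 2S_j|x|$, but through a more algebraic route than the paper. After the identical reduction to a single wedge $\mathcal W$ with convex complementary cone $\mathcal C$, the paper simply observes that a segment $[x,y]$ crossing $\mathcal C$ must meet both boundary rays $l_\pm$, hence $|x-y|\ge c_x:=\max\bigl(d(x,l_+),d(x,l_-)\bigr)$; it then reads off $c_x$ from the angular position of $x$, the two regimes in the definition of $S_j$ corresponding to whether the foot of the perpendicular from $x$ to the farther ray lands on the ray or falls beyond the apex. You instead parametrize the entry and exit points $p,q$ and reduce to a quadratic inequality in $t$ whose discriminant you announce to be nonpositive. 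Your route has the merit of exhibiting the extremal configurations explicitly and of clarifying that the factor $1/2$ in $S_j$ is a harmless relaxation of the sharp constant; the paper's route is shorter, since the max--of--distances observation bypasses the optimization entirely. One minor gap on your side: the discriminant check is promised but not performed. It is routine; for instance when $\cos\alpha\le 0$ your displayed inequality reduces, after cancelling $(1+\lambda)^2$ on both sides, to $t\bigl((2\lambda+1)t-2(1+\lambda)\cos\alpha\bigr)\ge 0$, which is immediate, and the case $\cos\alpha>0$ yields $B^2-4AC=-4(1+\lambda)^2\cos^2\alpha\,\sin^2\alpha\,\lambda\,(2+\lambda\cos^2\alpha)\le 0$.
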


\begin{proof} 
Without loss of generality (otherwise apply a rotation and a translation) we can assume  that $\Omega_\rho^{(j)}=\{(r\cos(\theta),r\sin(\theta)):\,r\in(0,\rho),\theta\in (\pi-\omega_j/2,\pi+\omega_j/2)\}$. 
Set  
\[
l_\pm:=\{(r\cos(\omega_j/2), \pm r\sin(\omega_j/2)): r\in [0,\rho]\}.
\]
Then, by construction, for any $x,y\in \Omega_\rho^{(j)}$, we have that $[x,y]\not\subset\Omega_\rho^{(j)}$ if and only if $|x-y|\geq c_x:= \max (d(x,l_-),d(x,l_+))$, 
With easy computations we have that
\[
c_x=
\begin{cases}
|x||\sin(\theta_x-\omega_j/2)| & \text{if}\ |\theta_x-\omega_j/2|<\pi/2,\\
|x| & \text{otherwise}. 
\end{cases} 
\]
Being $c_x\geq S_j|x|$, this concludes the proof.
%

\end{proof}

We are now ready to proof the main result of this section. 

\begin{proof}[Proof of \Cref{thmH12}:]
In this proof we denote by $C$ any positive constant.
Let $\{c_j\}_{j\in \operatorname{Conc}(V)}\in\ell^2(\C^2)$ and define
\[
\varphi(x):=\displaystyle\sum_{j\in \operatorname{Conc}(V)} c_j\varphi_j(x).
\]
We know that $\varphi\in L^2(\Omega_V;\C^2)$. Let us prove that, if $s$ satisfies \eqref{inf s}, then $\varphi\in H^{s}(\Omega_V;\C^2)$. In details, we want to prove that
\[
\norm{\varphi}_{H^{s}(\Omega_V;\C^2)}^2=
\norm{\varphi}_{L^2(\Omega_V;\C^2)}^2
+\underbrace{
\int_{\Omega_V}\int_{\Omega_V}
\frac{\left|\varphi(x)-\varphi(y)\right|^2}{|x-y|^{2s+2}}\,dx\, dy}_{\left[\varphi\right]^2_{H^{s}\left(\Omega_V;\C^2\right)}},
\]
is finite, that is, $[\varphi]_{H^{s}\left(\Omega_V;\C^2\right)}<+\infty$. Setting, $\Omega_\rho^{(j)}:=\Omega_V\cap B_\rho(v^{(j)})$, by construction we have that
\[
\begin{split}
[\varphi]_{H^{s}\left(\Omega_V;\C^2\right)}^2=&
\sum_{j\in \operatorname{Conc}(V)}|c_j|^2
\int_{\Omega_\rho^{(j)}}\int_{\Omega_\rho^{(j)}}
\frac{\left|\varphi_j(x)-\varphi_j(y)\right|^2}{|x-y|^{2s+2}}\,dx\, dy\\
&+\sum_{\underset{j\neq k}{j,k\in \operatorname{Conc}(V)}}
\int_{\Omega_\rho^{(j)}}\int_{\Omega_\rho^{(k)}}
\frac{\left|c_j \varphi_j(x)-c_k\varphi_k(y)\right|^2}{|x-y|^{2s+2}}\,dx\, dy
\\
=&:I+II.
\end{split}
\]
Let us prove that $I$ and $II$ are finite.
First we estimate $I$.  Without loss of generality (otherwise apply a rotation and a translation) we can assume that 
$\Omega_\rho^{(j)}=\{(r\cos(\theta),r\sin(\theta)):\,r\in[0,\rho),\theta\in (0,\omega_j)\}$. For any $j\in\operatorname{Conc}(V)$, define
\[
I_j:=
\underset{x,y\in\Omega_{\rho}^{(j)} }{\iint}
\frac{\left|\varphi_j(x)-\varphi_j(y)\right|^2}{|x-y|^{2s+2}}\,dx\, dy.  
\]
We split the integral into two parts
\[
I_j:=
\underset{ \ |x-y|\leq S_j |x|}{\underset{x,y\in\Omega_{\rho}^{(j)}}{\iint}}
\frac{\left|\varphi_j(x)-\varphi_j(y)\right|^2}{|x-y|^{2s+2}}\,dx\, dy  \ +
\underset{ \ |x-y|\geq S_j |x| }{\underset{x,y\in\Omega_{\rho}^{(j)}}{\iint}}
\frac{\left|\varphi_j(x)-\varphi_j(y)\right|^2}{|x-y|^{2s+2}}\,dx\, dy  =: I_{j,1}+I_{j,2},   
\]
where $S_j>0$ is defined in \eqref{eq: cj}. We estimate $I_{j,1}$ first. By \Cref{lemma: inequality_sen}, we can apply the mean value theorem, that combined with the Cauchy-Schwartz inequality, it gives us that there exists $\xi_{x,y}\in [x,y]$ such that 
\begin{equation}\label{ineq: nabla1}
|\varphi_j(x)-\varphi_j(y)|\leq |\nabla \varphi_j(\xi_{x,y})||x-y|.
\end{equation}
We also have that for  $\xi=(r\cos(\theta),r\sin(\theta))\in\Omega_\rho^{(j)}$, in polar coordinates
\[
\begin{split}
|\nabla \varphi_j(\xi)|^2 & =
 \left|
 \partial_r\left(r^{\lambda_j-1/2}\phi\left(\frac{r}{\rho}\right)\Phi_j(\theta)\right)\right|^2+ \frac{1}{r^2}\left|\partial_{\theta}\left(r^{\lambda_j-1/2}\phi\left(\frac{r}{\rho}\right)\Phi_j(\theta)\right)\right|^2\\
& = \frac{r^{2\lambda_j-3}}{\omega_j}\left(2(\lambda_j-1/2)^2\phi \left(\frac{r}{\rho}\right)^2+\frac{r^2}{\rho^2}\phi'\left(\frac{r}{\rho}\right)^2+2(\lambda_j-1/2)\frac{r}{\rho}\phi \left(\frac{r}{\rho}\right)\phi'\left(\frac{r}{\rho}\right)\right) \\
& \leq C r^{2\lambda_j-3},
\end{split}
\]
where the last inequality holds by definition of $\phi$. 
Moreover, as $|\xi_{x,y}|\geq \left||x|-|x-y|\right|\geq (1-S_j)|x|$, we get by \eqref{ineq: nabla1} that
\[
|\varphi_j(x)-\varphi_j(y)|^2\leq C(1-S_j)^{2\lambda_j-3}|x-y|^2 |x|^{2\lambda_j-3}.
\]
Finally, by Fubini's theorem,
\[
\begin{split}
I_{j,1} & \leq C(1-S_j)^{2\lambda_j-3} \underset{ \ |x-y|\leq S_j |x|}{\underset{x,y\in\Omega_{\rho}^{(j)}}{\iint}}|x|^{2\lambda_j-3}|x-y|^{-2s}dx dy \\
& \leq C(1-S_j)^{2\lambda_j-3}  \underset{x\in\Omega_{\rho}^{(j)}}{\int}|x|^{2\lambda_j-3} \left( \underset{|x-y|\leq S_j|x|}{\int} |x-y|^{-2s}dy\right)dx \\
& \leq CS_j^{2-2s}(1-S_j)^{2\lambda_j-3}  \underset{x\in\Omega_{\rho}^{(j)}}{\int}|x|^{2\lambda_j-2s-1} dx \\
& = CS_j^{2-2s}(1-S_j)^{2\lambda_j-3}  \rho^{2\lambda_j-2s+1},\end{split}
\] 
where, in the last equality, we have used that $2\lambda_j-2s-1>-2$ thanks to \eqref{inf s}.

Now we estimate $I_{j,2}$. Since $|\varphi_j(x)-\varphi_j(y)|^2\leq 2(|\varphi_j(x)|^2+|\varphi_j(y)|^2)$,  and by symmetry
\[
\begin{split}
I_{j,2}  & \leq 4 \underset{ |x-y|\geq S_j |x|}{\underset{x,y\in\Omega_{\rho}^{(j)}}{\iint}}|\varphi(x)|^2|x-y|^{-2s-2}dx dy \\
& \leq \frac{4}{\omega_j} \underset{x\in\Omega_{\rho}^{(j)}}{\int}|x|^{2\lambda_j-1} \left( \underset{|x-y|\geq S_j|x|}{\int} |x-y|^{-2s-2}dy\right)dx \\
&=  \frac{8\pi}{\omega_jS_j^{2s}} \underset{x\in\Omega_{\rho}^{(j)}}{\int}|x|^{2\lambda_j-2s-1} dx \leq \frac{C}{S_j^{2s}} \rho^{2\lambda_j-2s+1},
\end{split}
\] 
where, in the last equality, we have used that $2\lambda_j-2s-1>-2$.

Therefore, we can conclude that
\[
I=\sum_{j\in \operatorname{Conc}(V)}|c_j|^2(I_{j,1}+I_{j,2})\leq\displaystyle C \sum_{j\in \operatorname{Conc}(V)} |c_j|^2 \left( S_j^{2-2s}(1-S_j)^{2\lambda_j-3}+\frac{1}{S_j^{2s}}\right).
\]
Since $2\pi-\omega_j>\epsilon$ for all $j\in\text{Conc}(V)$, then there exists $\delta>0$ such that $\delta\leq S_j\leq 1/2$. Thus, the term within parentheses is bounded. Thanks to this and since $\{c_j\}_{j\in \operatorname{Conc}(V)}\in \ell^2(\C^2)$, $I$ is finite.
 
Let us estimate $II$. 
Since $|c_j\varphi_j(x)-c_k\varphi_k(y)|^2\leq 2(|c_j\varphi_j(x)|^2+|c_k\varphi_j(y)|^2)$, and by symmetry
\[
II\leq 4 \sum_{\underset{j\neq k}{j,k\in \operatorname{Conc}(V)}}
\int_{\Omega_\rho^{(j)}}\int_{\Omega_\rho^{(k)}}
\frac{|c_j \varphi_j(x)|^2}{|x-y|^{2s+2}}\,dx\, dy.
\]
By definition of non-degenerate infinite polygon, for $j\neq k$ and $x\in\Omega_\rho^{(j)}$, $y\in \Omega_\rho^{(k)}$, we have that
\[
|x-y|\geq |v^{(j)}-v^{(k)}| - |x-v^{(j)}|-|y-v^{(k)}|\geq 3\rho-2\rho\geq\rho.
\]
Thanks to this and by Fubini's Theorem,
\[
\begin{split}
II&
\leq 
4
\sum_{j\in \operatorname{Conc}(V)}
\int_{\Omega_\rho^{(j)}} |c_j \varphi_j(x)|^2
\left(\sum_{\underset{k\neq j}{k\in \operatorname{Conc}(V)}}\int_{\Omega_\rho^{(k)}}
|x-y|^{-2s-2}dy\right)dx \\
& \leq 
4
\sum_{j\in \operatorname{Conc}(V)}
\int_{\Omega_\rho^{(j)}} |c_j \varphi_j(x)|^2
\left(\int_{|z|>\rho}
|z|^{-2s-2}\,dz\right)\,dx\\
&\leq
 C\sum_{j\in \operatorname{Conc}(V)}|c_j|^2
\int_{\Omega_\rho^{(j)}}  |x|^{2\lambda_j-2s-1} dx \\
&=
C \sum_{j\in \operatorname{Conc}(V)} \frac{\rho^{2\lambda_j-2s+1}|c_j|^2 }{(2\lambda_j-2s+1)}.
\end{split}
\] 
Here $\{c_j\}_{j\in \operatorname{Conc}(V)}\in \ell^2(\C^2)$ and $s$ verifies \eqref{inf s}. Therefore $II$ is finite and this concludes the proof.

\end{proof}

\section{Spectral properties and proof of \texorpdfstring{\Cref{thm:spectrum}}{Theorem 1.8}}
\label{sec:spectrum}
This section is dedicated to the proof of \Cref{thm:spectrum} and \Cref{rem:bound-spectrum}, building upon the same strategy employed in the proof of \cite[Proposition 1.12]{LeTeOurBon18}.
We confine our investigation of spectral properties to $D_{sa}$ defined in \eqref{def: Dsa}. 

\begin{proof}[Proof of \Cref{thm:spectrum}]
Let us prove that $(-\infty,-m]\cup[m,+\infty)
\subseteq \sigma(D_{sa})$.
The main idea is to construct a Weyl sequence for the operator $D_{sa}$: for a fixed $|\mu|\geq m$, and $x=(x_1,x_2)\in \Omega_V$ define  
\[
\psi(x):=
\begin{pmatrix}
\sqrt{|\mu|+\operatorname{sign}(\mu)m} \\\\
i\operatorname{sign}(\mu)\sqrt{|\mu|-\operatorname{sign}(\mu)m}
\end{pmatrix}
\frac{e^{i\sqrt{\mu^2-m^2}\,x_2}}{\sqrt{4\pi |\mu|}}.
\]
We remark that $(\Dirac_m-\mu)\psi=0$. Moreover,
thanks to \eqref{eq:bdd-condition-spectrum}, it exists a sequence $\{x_n\}_{n\in \N}\subseteq \Omega_V$ such that $|x_n|\to +\infty$ for $n\to +\infty$ and $\{ x\in\R^2:|x-x_n|<n \}\subset \Omega_V$. Then for $n\in \mathbb{N}$  define
\[
\psi_n(x)=\psi(x)\phi\left(\frac{|x-x_n|}{n}\right),
\]
being $\phi$ the cut-off function defined in \eqref{cutoff}. Then, by construction $\psi_n\in \D(D_V)\subseteq\D(D_{sa})$.
Let us prove that this is a Weyl sequence. Indeed we can easily compute
\[
\norm{\psi_n}_{L^2(\Omega_V;\mathbb{C}^2)}^2=
n^2
\int_0^1 r\phi(r)^2\,dr
\quad
\text{and}
\quad
\norm{(D_{sa}-\mu)\psi_n}_{L^2(\Omega_V;\mathbb{C}^2)}^2
=
\int_0^1 r\phi'(r)^2\,dr.
\]
Thus
\[
\lim_{n\to+\infty} \frac{\norm{(D_{sa}-\mu)\psi_n}_{L^2(\Omega_V;\mathbb{C}^2)}^2}{\norm{\psi_n}_{L^2(\Omega_V;\mathbb{C}^2)}^2}=0,
\]
and therefore $\mu\in \sigma(D_{sa})$. 

We now discuss the reverse inclusion. Let $\psi\in\mathcal{D}(D_{sa})$, then
\begin{equation}\label{eq:square}
\begin{split}
\norm{D_{sa}\psi}_{L^2(\Omega_V; \mathbb{C}^2)}^2= &\ \norm{-i\s \psi}_{L^2(\Omega_V; \mathbb{C}^2)}^2+m^2\norm{\psi}_{L^2(\Omega_V; \mathbb{C}^2)}\\
& +2m\operatorname{Re}\langle -i\s \psi,\sigma_3 \psi \rangle_{L^2(\Omega_V; \mathbb{C}^2)}.
\end{split}
\end{equation}
We state that 
\begin{equation}\label{eq:2Re-commutator}
2\operatorname{Re}\langle 
-i\s \psi,\sigma_3 \psi 
\rangle_{L^2(\Omega_V; \mathbb{C}^2)}
= \norm{\psi}_{L^2(\partial\Omega;\C^2)}^2.
\end{equation}
Since $m\geq 0$, we can conclude that
$
\norm{D_{sa}\psi}_{L^2(\Omega_V; \mathbb{C}^2)}^2
\geq 
m^2\norm{\psi}_{L^2(\Omega_V; \mathbb{C}^2)}.
$
Therefore, $\sigma(D_{sa}^2)\subset [m^2,+\infty)$ 
and so 
$
\sigma(D_{sa})\subset (-\infty,-m]\cup [m,+\infty)$. 

Let us prove \eqref{eq:2Re-commutator}.
By \Cref{thmH12},
\[
\psi =\psi_0 +\sum_{j\in \operatorname{Conc}(V)}c_j^+\varphi_j^+,
\]
where $\psi_0\in \mathcal{D}(D_V)$, $\varphi_j^+$ is defined in \eqref{definvarphi} and the series is convergent in the norm of $D_{sa}$ if $\operatorname{Conc}(V)$ is infinite.
Since the functions $\varphi_j^+$ are compactly supported and $\operatorname{supp}(\varphi_j^+)\cap \operatorname{supp}(\varphi_k^+)=\emptyset$ for $j\neq k$, we have
\[
\begin{split}
2\operatorname{Re}\langle 
-i\s \psi,\sigma_3 \psi
\rangle_{L^2(\Omega_V; \mathbb{C}^2)} 
=&\
2\operatorname{Re}
\langle
-i\s \psi_0,\sigma_3\psi_0 
\rangle_{L^2(\Omega_V; \mathbb{C}^2)}
\\
&+
\sum_{j\in \operatorname{Conc}(V)}|c_j|^2
2\operatorname{Re}\left\langle
 -i\s \varphi_j^+,
 \sigma_3\varphi_j^+ 
\right\rangle_{L^2(\Omega_V; \mathbb{C}^2)}
\\
&+
\sum_{j\in \operatorname{Conc}(V)}
2\operatorname{Re}\left\langle 
-i\s \psi_0,\sigma_3 c_j^+\varphi_j^+ 
\right\rangle_{L^2(\Omega_V; \mathbb{C}^2)}
\\
&+
\sum_{j\in \operatorname{Conc}(V)}
2\operatorname{Re}
\left\langle
 -ic_j^+\s\varphi_j^+,\sigma_3\psi_0 
 \right\rangle_{L^2(\Omega_V; \mathbb{C}^2)}.
\end{split}
\]

Let us estimate the four terms separately.
About the first one, since $\psi_0\in \mathcal{D}(D_V)$ we can integrate by parts by \Cref{prop:sigma.nabla=nabla}
\[
2\text{Re}\langle -i\s \psi_0, \sigma_3 \psi_0 \rangle_{L^2(\Omega_V;\mathbb{C}^2)}=\norm{\psi_0}^2_{L^2(\partial \Omega_V;\mathbb{C}^2)}.
\]
The second term can be estimate explicitly. Indeed, thanks to \eqref{proof:diracpolar} and integrating by parts we get
\[
\begin{split}
2\operatorname{Re}\langle -i\s \varphi_j^+, \sigma_3 \varphi_j^+ \rangle_{L^2(\Omega_V;\mathbb{C}^2)}&=
-\frac{2}{\pi}
\int_0^\rho 
r^{2\lambda_j}
\left(\phi^2\left(\frac{r}{\rho}\right)\right)'\,dr
=
\frac{2}{\omega}\int_0^\rho
r^{2\lambda_j-1} \phi^2 \left(\frac{r}{\rho }\right)\,dr
\\
&
=
\norm{\varphi_j^+}^2_{L^2(\partial \Omega_V;\mathbb{C}^2)}.
\end{split}
\]
About the fourth terms, since $\psi_0\in H^1(\Omega_V;\C^2)$ we can integrate by parts. Using the anti-commutation property of the Pauli matrices and the boundary conditions we get
\[
\begin{split}
2\operatorname{Re}
\left\langle
 -ic_j^+\s\varphi_j^+,\sigma_3\psi_0 
 \right\rangle_{L^2(\Omega_V; \mathbb{C}^2)}
=&
-2\operatorname{Re}
\left\langle 
c_j^+\sigma_3\varphi_j^+,-i\s\psi_0 
\right\rangle_{L^2(\Omega_V; \mathbb{C}^2)}\\
&+2
\operatorname{Re}\left\langle 
 c_j^+\varphi_j^+, \psi_0
\right\rangle_{L^2(\partial\Omega_V; \mathbb{C}^2)}.
\end{split}
\]
Summing the four terms we get that \eqref{eq:2Re-commutator} is proved and this concludes the proof.
\end{proof}
\begin{remark}
As a consequence of \eqref{eq:square} and \eqref{eq:2Re-commutator}, we have that for functions $\psi\in \D(D_{sa})$ it is possible to define a boundary trace in $L^2(\partial\Omega_V;\C^2)$. This property is not trivial since, in general, functions in $H^{1/2}$ do not have a $L^2$-regular boundary trace.
\end{remark}

To conclude the section it remains to prove \Cref{rem:bound-spectrum}.

\begin{proof}[Proof of \Cref{rem:bound-spectrum}]
The domain $\Omega_V=(-1,1)\times(0,+\infty)$ has no concave corners and so the operator $D_V$ defined in \eqref{defdomainDV} is self-adjoint. Let $\psi\in  \D(D_V)$ and let us compute 
\[
\begin{split}
\norm{
D_V\psi
}_{L^2(\Omega_V;\C^2)}^2=&
\int_0^\infty\int_{-1}^1 
\left|(-i\sigma_1\partial_{x_1}+m\sigma_3)\psi\right|^2
\,dx_1\, dx_2
\\
&+
\int_0^\infty\int_{-1}^1 
\left|-i\sigma_2\partial_{x_2}\psi\right|^2
\,dx_1\,dx_2
\\
&+
2\operatorname{Re}
\int_0^\infty\int_{-1}^1 
\left\langle
-i\sigma_1\partial_{x_1}\psi,-i\sigma_2\partial_{x_2}\psi\right\rangle_{\C^2}
\,dx_1\,dx_2\\
&+
2m\operatorname{Re}
\int_0^\infty\int_{-1}^1 
\left\langle
\sigma_3\psi,-i\sigma_2\partial_{x_2}\psi\right\rangle_{\C^2}
\,dx_1\,dx_2\\
=&:I+II+III+IV.
\end{split}
\]
Let us estimate these terms separately.

About the first one, for any fixed $x_2\in(0,+\infty)$ the function $\psi(\cdot,x_2)$ belongs to the domain of the \emph{transverse} Dirac operator $D_{tr}$ defined as
\[
\D(D_{tr}):=\left\{ u\in H^1((-1,1);\C^2): u_2(\pm 1)=\pm i u_1(\pm1)\right\}.
\]
This operator has been introduced in an equivalent way in \cite{BorBrKrOB22}. Thanks to \cite[Proposition 10]{BorBrKrOB22} we have
\[
\norm{D_{tr}\psi(\cdot,x_2)}_{L^2((-1,1);\C^2)}^2
\geq 
\left(m^2+E_1(m)\right)
\norm{\psi(\cdot,x_2)}_{L^2((-1,1);\C^2)}^2,
\]
and thus $I\geq \left(m^2+E_1(m)\right)\norm{\psi}_{L^2(\Omega_V;\C^2)}^2$.

About the second one, we have $II\geq 0$.
About the third term, repeating the proof of \cite[(iii) of Lemma 2.1]{PizzVDB21} and since the curvature of $\partial\Omega_V$ is zero, we get that $III=0$.

About the last term, thanks to Fubini's theorem and reasoning as in the proof of \eqref{eq:2Re-commutator}, we have that
\[
IV=
m
\int_{-1}^{1}
|\psi(x_1,0)|^2
\,dx_1\geq 0.
\]
Combining these facts, we have that $\norm{D_V\psi}_{L^2(\Omega_V; \mathbb{C}^2)}^2\geq \left(m^2+E_1(m)\right)\norm{\psi}_{L^2(\Omega_V; \mathbb{C}^2)}^2$  and thus
\[
\sigma(D_V)\subseteq\left(-\infty,-\sqrt{m^2+E_1(m)}\right]\cup\left[\sqrt{m^2+E_1(m)},+\infty\right).
\]
To prove the other inclusion, set for simplicity $M:=\sqrt{m^2+E_1(m)}$, and for $t\in(-1,1)$ define 
\[
\begin{split}
\Phi^+(t)&:=
C
\begin{pmatrix}
 \left(M+m\right) \sin \left((t+1) \sqrt{E_1(m)}\right)+\sqrt{E_1(m)} \cos \left((t+1) \sqrt{E_1(m)}\right) \\
 -i \left(\left(m-M\right) \sin \left((t+1) \sqrt{E_1(m)}\right)+\sqrt{E_1(m)} \cos \left((t+1) \sqrt{E_1(m)}\right)\right) 
\end{pmatrix},
\\
\Phi^-(t)&:=
C
\begin{pmatrix}
 i \left(\left(m-M\right) \sin \left((t+1) \sqrt{E_1(m)}\right)+\sqrt{E_1(m)} \cos \left((t+1) \sqrt{E_1(m)}\right)\right) \\
 \left(M+m\right) \sin \left((t+1) \sqrt{E_1(m)}\right)+\sqrt{E_1(m)} \cos \left((t+1) \sqrt{E_1(m)}\right) 
\end{pmatrix},
\end{split}
\] 
being $C>0$ a normalization constant in $L^2((-1,1);\C^2)$. 
Notice that $\Phi^\pm\in \D(D_{tr})$ and 
\begin{equation}\label{eq:eigen-transv-oper}
D_{tr}\Phi^{\pm}=\pm M \Phi^{\pm}.
\end{equation}
For $|\mu|\geq M$ and for $x=(x_1,x_2)\in \Omega_V$ set
\[
\psi(x):=
\left(
\sqrt{|\mu|+\operatorname{sign}(\mu)M}
\Phi^+(x_1)
+
i\operatorname{sign}(\mu)\sqrt{|\mu|-\operatorname{sign}(\mu)M}
\Phi^-(x_1)
\right)
\frac{e^{i\sqrt{\mu^2-M^2}\,x_2}}{2|\mu|}.
\]
Thanks to \eqref{eq:eigen-transv-oper} and using the fact that $-i\sigma_2 \Phi^+=\Phi^-$, we have $(\Dirac_m-\mu)\psi=0$. For $n>0$, define
\[
\psi_n(x_1,x_2):=\psi(x_1,x_2)\cdot\phi\left(\frac{|x_1-n|}{n}\right)
\in \D(D_V),
\]
being $\phi$ the cut-off function defined in \eqref{cutoff}.
This is a Weyl sequence. Indeed, 
thanks to the fact that $\Phi^+$ and $\Phi^-$ are orthonormal functions in $L^2((-1,1);\C^2)$, we can easily compute
\[
\norm{\psi_n}_{L^2(\Omega_V;\mathbb{C}^2)}^2=
n
\norm{\phi}_{L^2(0,1)}^2
\quad
\text{and}
\quad
\norm{(D_V-\mu)\psi_n}_{L^2(\Omega_V;\mathbb{C}^2)}^2
=
\norm{\phi'}_{L^2(0,1)}^2.
\]
Thus
\[
\lim_{n\to+\infty} \frac{\norm{(D_V-\mu)\psi_n}_{L^2(\Omega_V;\mathbb{C}^2)}^2}{\norm{\psi_n}_{L^2(\Omega_V;\mathbb{C}^2)}^2}=0,
\]
and therefore $(-\infty,-M]\cup[M,+\infty)\subset\sigma(D_V)$.
This concludes the proof.
\end{proof}

\section*{Acknowledgements}
We would like to thank Luis Vega for the enlightening discussions.
M.~C.~is supported by the Basque Government through the BERC 2022-2025 program and by the Ministry of Science and Innovation: BCAM Severo Ochoa accreditation CEX2021-001142-S / MICIN / AEI / 10.13039/501100011033.
F.~P.~is supported by the project  PID2021-123034NB-I00 funded by funded by MCIN/ AEI/10.13039/501100011033/ FEDER, UE.
We declare that we do not have  conflict of interests related to this publication.

\appendix
\section{Polar decomposition of the infinite mass Dirac operator on a sector}
\label{sec:polar}
The goal of this section is to give a polar decomposition of the localized Dirac operator $D_\rho^{(j)}$ defined in \eqref{eq:def-D-rho-j}. Setting for convenience $\Omega_\rho^{(j)}:=\Omega_V\cap B_\rho(v^{(j)})$ and after applying a rotation and translation, we can assume without loss of generality that
\begin{equation}\label{eq:def-Omega_rho}
\Omega_\rho^{(j)}=\{(r\cos(\theta),r\sin(\theta)):r\in(0,\rho),\,\theta\in(0,\omega_j)\}.
\end{equation}
Since $j\in \operatorname{Conc}(V)$ is fixed, in this section we simplify the notation and we omit it.

The subject of the polar decomposition of the infinite mass Dirac operator on a sector is very well known, so we give a brief presentation of it here. For full details the reader may see \cite{LeTeOurBon18,PizzVDB21,CGP22,CL20,FL23} or \cite[Section 4.6]{Thaller92} for the analogous three dimensional reduction.

We use the standard notation for polar coordinates: for $x=(x_1,x_2)\in\R^2\setminus\{0\}$, 
\[
	\begin{array}{l}
		x_1=r\cos(\theta),\\
		x_2= r\sin(\theta),
	\end{array}
	\quad\text{being}\quad
	\begin{array}{l}
		r:=\sqrt{x_1^2+x_2^2}\in (0,+\infty),\\
		\theta:=\frac{\pi}{2}\cdot(1-\operatorname{sign}(x_2))+\arccos\left(x_1/r \right)\in[0,2\pi).
	\end{array}
\]
For all $\psi \in L^2(\Omega_\rho;\C)$, let $\varphi : (0,\rho) \times (0,\omega) \to \C$ be defined as follows
\[
	\varphi(r,\theta):= \sqrt{r} \, \psi \left(r\cos(\theta),r\sin(\theta)\right),
\]
The map $\psi \mapsto \varphi$ is a unitary map $L^2(\Omega_\rho; \C) \to L^2((0,\rho);\C)\otimes L^2((0,\omega);\C)$, since
\[
	\int_{\Omega_\rho} |\psi(x)|^2\,\ud x\,=\,
		\int_0^\omega \int_0^{+\infty} |\varphi(r,\theta)|^2 \, dr\,d\theta.
\]
Repeating this reasoning for every component of the wave-function, we obtain the decomposition
\begin{equation}\label{eq:equivL2}
	L^2(\Omega_\rho;\C^2)\,\cong\,
	L^2((0,\rho);\C^2)\otimes L^2((0,\omega);\C^2),
\end{equation}
where `` $\cong$ ''means unitarly equivalent. 

It is useful to express the Dirac operators in polar coordinates: setting
\[
	\boldsymbol{e_r}:=(\cos(\theta),\sin(\theta))=\frac{x}{r},\quad
	\boldsymbol{e_\theta}:=(-\sin(\theta),\cos(\theta))=\frac{\partial \boldsymbol{e_r}}{\partial\theta},
\]
we abbreviate  
\[
	\partial_r=\boldsymbol{e_r}\cdot{\nabla}
	\quad\text{and}\quad
	\partial_\theta=\boldsymbol{e_\theta}\cdot{\nabla}.
\]
By means of elementary computations, it is easy to see that
\[
	\boldsymbol{\sigma} \cdot \boldsymbol{e_r} \;=\; \begin{pmatrix}
		0 & e^{-i \theta} \\
		e^{i \theta} & 0
	\end{pmatrix}.
\]
and that the identity $\boldsymbol{\sigma}\cdot\boldsymbol{e_\theta} = i \boldsymbol{\sigma}\cdot\boldsymbol{e_r} \sigma_3$ holds. We obtain
\[
	-i\boldsymbol{\sigma}\cdot\nabla= 
		-i \boldsymbol{\sigma}\cdot \left(\boldsymbol{e_r} \partial_r + \frac{1}{r}\boldsymbol{e_\theta}\partial_\theta \right)
	 = -i \boldsymbol{\sigma}\cdot\boldsymbol{e_r} \left(\partial_r  +\frac{1}{2r}-\frac{K_\omega}{r}  \right),
\]
where $K_\omega$ is the \emph{spin-orbit operator}
\[
K_\omega := \frac{1}{2}\mathbbm{1} - i \sigma_3 \partial_\theta.
\]
To get an appropriate decomposition of $L^2((0,\omega);\C^2)$, we recall \cite[Lemma 2.4]{PizzVDB21},  about the properties of $K_\omega$.
\begin{proposition}[Properties of the spin-orbit operator]\label{prop:properties.spin-orbit}
Let $\omega\in(0,2\pi]$, $\Omega_\rho$ as in \eqref{eq:def-Omega_rho} and for any $k\in\mathbb{N}$ set
\begin{equation}\label{eq:def-lambda_k}
\tau_k:=\frac{(2k+1)\pi}{2\omega},
\end{equation}
and
\begin{equation}\label{eq:def-f_k}
	f_k^+ (\theta) \; := \; \frac{1}{\sqrt{2\omega}} \begin{pmatrix}
				e^{i(\tau_k-\frac{1}{2}) \theta} \\ e^{-i(\tau_k-\frac{1}{2}) \theta}
						\end{pmatrix}, \quad 
	f_k^- (\theta) \; := \; \frac{-i}{\sqrt{2\omega}} \begin{pmatrix}
				e^{-i(\tau_k+\frac{1}{2}) \theta} \\ e^{i(\tau_k+\frac{1}{2}) \theta}
						\end{pmatrix},
		\quad \text{ for }\theta \in (0,\omega).
		\end{equation}
The spin-orbit operator with infinite mass boundary conditions
\[
	\begin{split}
		& K_\omega \;:=\; \frac{1}{2} \mathbbm{1}- i \sigma_3 \partial_\theta \, , \\
		& \mathcal{D}(K_\omega) \;:=\; 
			\big\{ 
				f=(f_1,f_2) \in H^1((0,\omega), \mathbb{C}^2) \, : \, 
				f_2(\omega)=-e^{i \omega} f_1(\omega) \, , \,
				f_2(0)=f_1(0) 
			\big\}\, ,
			\end{split}
\]
has the following properties:
	\begin{enumerate}[label=$({\roman*})$]
		\item $K_\omega$ is self-adjoint and it has a compact resolvent;
		\item\label{item:ONB}
		$\{f^+_k, f^-_k\}_{k \in \mathbb{N}}$  is an orthonormal basis of eigenfunctions of $L^2((0,\omega);\mathbb{C}^2)$ with eigenvalues 
	$\{ \tau_k,-\tau_k\}_{k \in \mathbb{N}}$; 
		\item $-i(\boldsymbol{\sigma} \cdot \boldsymbol{e_r} ) f^\pm_k = \pm  f^\mp_k $ for all $k \in \N$.
	\end{enumerate}
\end{proposition}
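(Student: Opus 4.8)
The plan is to exploit that, since $\sigma_3=\operatorname{diag}(1,-1)$, the operator decouples into two scalar first-order operators coupled only through the boundary conditions: writing $f=(f_1,f_2)$, one has $K_\omega f=\bigl(\tfrac12 f_1-i f_1',\,\tfrac12 f_2+i f_2'\bigr)$. I would organise the argument in the order (symmetry) $\to$ (self-adjointness and compact resolvent) $\to$ (explicit eigenpairs) $\to$ (completeness) $\to$ (the intertwining identity \emph{(iii)}). The bounded term $\tfrac12\mathbbm{1}$ is self-adjoint and plays no role in any domain question, so I would effectively work with $-i\sigma_3\partial_\theta$.

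First I would prove symmetry by integrating by parts: for $f,g\in\mathcal{D}(K_\omega)$ the bulk terms cancel and one is left with the boundary form $-i f_1\overline{g_1}+i f_2\overline{g_2}$ evaluated at $\theta=0$ and $\theta=\omega$. At $\theta=0$ the condition $f_2(0)=f_1(0)$ (and likewise for $g$) makes the two contributions cancel; at $\theta=\omega$ the condition $f_2(\omega)=-e^{i\omega}f_1(\omega)$ turns $i f_2(\omega)\overline{g_2(\omega)}$ into $i f_1(\omega)\overline{g_1(\omega)}$, which again cancels. Hence $K_\omega$ is symmetric. To upgrade to self-adjointness I would show $\C\setminus\R\subset\rho(K_\omega)$: for $z\notin\R$ the equation $(K_\omega-z)f=h$ is a linear two-point boundary value problem for a first-order $2\times 2$ system, which I would solve explicitly by integrating the decoupled inhomogeneous equations and fixing the two constants from the two boundary conditions. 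This problem is uniquely solvable precisely when $z$ is not an eigenvalue, and since all eigenvalues turn out to be real, every non-real $z$ lies in the resolvent set. This yields self-adjointness of the symmetric operator $K_\omega$ via $\operatorname{Ran}(K_\omega\mp i)=L^2((0,\omega);\C^2)$ and, because the resolvent maps $L^2((0,\omega);\C^2)$ into $\mathcal{D}(K_\omega)\subset H^1((0,\omega);\C^2)$, compactness of the resolvent follows from the compact Rellich embedding $H^1\hookrightarrow L^2$ on the bounded interval.

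Next I would compute the spectrum directly. Solving $K_\omega f=\lambda f$ gives $f_1(\theta)=A\,e^{i(\lambda-1/2)\theta}$ and $f_2(\theta)=B\,e^{-i(\lambda-1/2)\theta}$; the condition at $\theta=0$ forces $B=A$, and the condition at $\theta=\omega$ becomes the quantisation $e^{2i\lambda\omega}=-1$, i.e. $\lambda\in\{\tfrac{(2k+1)\pi}{2\omega}:k\in\mathbb{Z}\}=\{\pm\tau_k\}_{k\in\N}$. Normalising the resulting eigenfunctions reproduces exactly $f_k^+$ (for $\lambda=\tau_k$) and $f_k^-$ (for $\lambda=-\tau_k$), and each eigenvalue is simple. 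Since $K_\omega$ is self-adjoint with compact resolvent and $\{f_k^\pm\}$ exhausts its eigenfunctions, the spectral theorem guarantees that $\{f_k^+,f_k^-\}_{k\in\N}$ is an orthonormal basis, giving \emph{(i)} and \emph{(ii)}; orthonormality can alternatively be verified by the elementary exponential integrals, the prefactor $1/\sqrt{2\omega}$ being chosen for unit norm.

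Finally, \emph{(iii)} is a one-line matrix computation using $\boldsymbol{\sigma}\cdot\boldsymbol{e_r}=\left(\begin{smallmatrix}0&e^{-i\theta}\\ e^{i\theta}&0\end{smallmatrix}\right)$: multiplying $f_k^+$ shifts the two exponents by $\mp 1$ and, together with the prefactor $-i$, produces exactly $f_k^-$, and symmetrically $-i(\boldsymbol{\sigma}\cdot\boldsymbol{e_r})f_k^-=-f_k^+$. I expect the only genuinely delicate point to be the self-adjointness step, namely distinguishing $K_\omega$ from a merely symmetric restriction, which is why I would route it through explicit solvability of the resolvent boundary value problem rather than a bare deficiency-index count; everything else reduces to routine ODE integration and exponential bookkeeping.
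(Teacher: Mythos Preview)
Your proof is correct and complete. The paper, however, does not actually prove this proposition: it is stated as a recall of \cite[Lemma 2.4]{PizzVDB21} (see the sentence immediately preceding the statement), so there is no in-paper proof to compare against. Your argument is precisely the standard one that underlies the cited reference: integration by parts for symmetry, explicit solution of the first-order two-point boundary value problem for self-adjointness, Rellich for compact resolvent, direct ODE integration yielding the quantisation $e^{2i\lambda\omega}=-1$, and the spectral theorem for completeness; item \emph{(iii)} is indeed a one-line verification with $\boldsymbol{\sigma}\cdot\boldsymbol{e_r}$. Nothing is missing, and the only cosmetic remark is that your self-adjointness route (surjectivity of $K_\omega\mp i$) could be shortened, if you wished, by observing that $-i\sigma_3\partial_\theta$ on $\mathcal{D}(K_\omega)$ is a first-order system with \emph{unitary} boundary matrix relating $f(0)$ to $f(\omega)$, which is a textbook criterion for self-adjointness; but your explicit approach is perfectly sound.
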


In the following proposition we finally decompose the space $L^2(\Omega_\rho;\C^2)$ in partial wave subspaces.
\begin{proposition}[Polar decomposition]\label{prop:decomposition.partialwave}
	Let $\omega\in (0,2\pi]$, $\Omega_\rho$ be defined as in \eqref{eq:def-Omega_rho} and for all $k\in \mathbb{N}$ let $\tau_k^{\pm}$ as in \eqref{eq:def-lambda_k} and $f_k^{\pm}$ as in \eqref{eq:def-f_k}.
	Then, 
for any $\psi\in L^2(\Omega_\rho;\C^2)$ there exists $\{(u^+_k,u^-_k)\}_{k\in\N}\in L^2(0,+\infty)\oplus L^2(0,+\infty)$ such that
\[
	\begin{gathered}
	\psi(r,\theta)=\frac{1}{\sqrt{r}}\sum_{k\in \N}
	\left[u_k^+(r) f_k^+(\theta)+ u_k^-(r) f_k^-(\theta)\right]
	\quad
	\text{ for } r\in (0,+\infty), \theta \in (0,\omega),\\
	\Vert \psi\Vert_{L^2(\Omega_\rho;\C^2)}^2 = 
	\sum_{k \in \N} 
	\left[ \Vert u_k^+\Vert_{L^2(0,\rho)}^2 +
	\Vert u_k^-\Vert_{L^2(0,\rho)}^2 \right].
	\end{gathered}
\]

	\end{proposition}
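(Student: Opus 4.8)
The plan is to combine the unitary polar-coordinate change with the spectral expansion furnished by \Cref{prop:properties.spin-orbit}. First I would pass from $\psi$ to $\varphi(r,\theta):=\sqrt{r}\,\psi(r\cos\theta,r\sin\theta)$, which by \eqref{eq:equivL2} identifies $L^2(\Omega_\rho;\C^2)$ isometrically with the tensor product $L^2((0,\rho);\C^2)\otimes L^2((0,\omega);\C^2)$, equivalently with the Bochner space $L^2\big((0,\rho);L^2((0,\omega);\C^2)\big)$. Under this identification the claimed decomposition is precisely the expansion of the $L^2((0,\omega);\C^2)$-valued map $r\mapsto\varphi(r,\cdot)$ in the angular basis $\{f_k^+,f_k^-\}_{k\in\N}$, so no further structural input is needed beyond the spin-orbit basis.

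Next, for almost every fixed $r\in(0,\rho)$ the slice $\varphi(r,\cdot)$ lies in $L^2((0,\omega);\C^2)$, so part~\ref{item:ONB} of \Cref{prop:properties.spin-orbit} lets me write $\varphi(r,\theta)=\sum_{k\in\N}\big[u_k^+(r)f_k^+(\theta)+u_k^-(r)f_k^-(\theta)\big]$ with
\[
u_k^\pm(r):=\langle \varphi(r,\cdot),\,f_k^\pm\rangle_{L^2((0,\omega);\C^2)}.
\]
I would first check that each $u_k^\pm$ is measurable in $r$ (Fubini applied to the fixed test vector $f_k^\pm$) and then invoke Parseval's identity in $L^2((0,\omega);\C^2)$ to obtain the pointwise-in-$r$ relation $\|\varphi(r,\cdot)\|_{L^2((0,\omega);\C^2)}^2=\sum_{k}\big(|u_k^+(r)|^2+|u_k^-(r)|^2\big)$.

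Integrating this identity over $r\in(0,\rho)$ and interchanging the sum with the integral — legitimate by Tonelli's theorem, since every summand is nonnegative — yields
\[
\|\psi\|_{L^2(\Omega_\rho;\C^2)}^2=\|\varphi\|^2=\sum_{k\in\N}\Big(\|u_k^+\|_{L^2(0,\rho)}^2+\|u_k^-\|_{L^2(0,\rho)}^2\Big),
\]
which simultaneously establishes the norm identity and shows $u_k^\pm\in L^2(0,\rho)$. In particular the series $\sum_k\big[u_k^+f_k^++u_k^-f_k^-\big]$ converges in $L^2(\Omega_\rho;\C^2)$ by completeness. Finally I would undo the change of variables through $\psi=\varphi/\sqrt{r}$ to recover the stated series.

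The only genuinely delicate points are the measurability of the coefficients $u_k^\pm$ and the interchange of summation with the radial integral; both are handled by the Fubini--Tonelli theorem once the tensor-product (Bochner) viewpoint is adopted. I do not expect a substantial obstacle: the real content of the polar decomposition is carried by the orthonormal spin-orbit basis, which is already provided by \Cref{prop:properties.spin-orbit}, so that convergence of the angular expansion and Parseval's identity require no additional work.
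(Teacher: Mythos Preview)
Your proposal is correct and follows precisely the paper's approach: the paper's proof is a single line stating that the result is immediate from \eqref{eq:equivL2} and part~\ref{item:ONB} of \Cref{prop:properties.spin-orbit}, and your argument simply unpacks these two ingredients (the unitary polar-coordinate change and the angular orthonormal basis) in more detail.
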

	\begin{proof}
	The proof is immediate from \eqref{eq:equivL2} and \ref{item:ONB} of \Cref{prop:properties.spin-orbit}.
	\end{proof}
Thanks to \Cref{prop:decomposition.partialwave}, it is possible to decompose the Dirac operator $D_\rho$ as the direct sum of the one dimensional Dirac operators on the half-line. For any $k\in\mathbb{N}$ define the differential expression
\begin{equation}\label{eq:def-slashed-dk}
\slashed{\mathfrak{d}_k}:=
\begin{pmatrix}
0&-\partial_r-\frac{\tau_k}{r}\\
\partial_r-\frac{\tau_k}{r} & 0
\end{pmatrix}.
\end{equation}
\begin{proposition}\label{prop:UnitaryTransformation}
Let $\omega \in (0,2\pi]$, $\Omega_\rho$ be defined as in \eqref{eq:def-D-rho-j} and $D_\rho$ defined as in \eqref{eq:def-D-rho-j}.
For all $k \in \N$, define
\[
\D(d_k):=  H^1_0\big((0,\rho);\C^2\big),\quad d_k u= \slashed{\mathfrak{d}_k}u,
\]
be defined as in \eqref{eq:def-slashed-dk}. Then, if $m=0$,
\[
		 D_\rho \;\cong\; \bigoplus_{k \in \N} d_k,
\]
	where `` $\cong$ '' means that the operators are unitarily equivalent. 
In detail, for $\psi \in L^2(\Omega_\rho;\C^2)$ there exists $\{u_k=(u^+_k,u^-_k)\}_{k\in\N}\subset L^2\left((0,\rho);\C^2\right)$ such that  
	\begin{equation}
	\label{eq:equivalence.psi.radial}
	\psi \in \D(D_\rho) \iff u_k\in \D(d_k) = H^1_0\big((0,\rho);\C^2\big)
	\
	\text{ for all }k \in \N.
	\end{equation}
And for $r\in(0,\rho)$ and $\theta \in (0,\omega)$,
\begin{equation}\label{eq:compute.HV}
D_\rho \psi (r,\theta) = \frac{1}{\sqrt{r}}\sum_{k\in \N}
\left[
\left(-\partial_r-\frac{\tau_k}{r}\right)
u_k^-(r)f_k^+(\theta)+
\left(\partial_r-\frac{\tau_k}{r}\right)
u_k^+(r)f_k^-(\theta)
\right].
\end{equation}
\end{proposition}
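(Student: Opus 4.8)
The plan is to upgrade the unitary $L^2$-decomposition of \Cref{prop:decomposition.partialwave} to a unitary equivalence of operators, by computing the action of $-i\boldsymbol{\sigma}\cdot\nabla$ on each partial wave and then matching the two domains. I would fix $\psi\in L^2(\Omega_\rho;\C^2)$, write it as in \Cref{prop:decomposition.partialwave} so that $\psi\mapsto\{(u_k^+,u_k^-)\}_k$ realizes $L^2(\Omega_\rho;\C^2)\cong\bigoplus_{k\in\N}L^2((0,\rho);\C^2)$, and substitute the polar form $-i\boldsymbol{\sigma}\cdot\nabla=-i\boldsymbol{\sigma}\cdot\boldsymbol{e_r}\bigl(\partial_r+\tfrac{1}{2r}-\tfrac{K_\omega}{r}\bigr)$ into a single summand $\tfrac{1}{\sqrt r}u_k^\pm(r)f_k^\pm(\theta)$. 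The term $\tfrac{1}{2r}$ is tailored to cancel the derivative of $r^{-1/2}$, giving $(\partial_r+\tfrac{1}{2r})\tfrac{u_k^\pm}{\sqrt r}=\tfrac{(u_k^\pm)'}{\sqrt r}$; the eigenrelation $K_\omega f_k^\pm=\pm\tau_k f_k^\pm$ from \Cref{prop:properties.spin-orbit} replaces $-\tfrac{K_\omega}{r}$ by $\mp\tfrac{\tau_k}{r}$; and the intertwining $-i\boldsymbol{\sigma}\cdot\boldsymbol{e_r}f_k^\pm=\pm f_k^\mp$ exchanges the two angular modes. Collecting the contributions reproduces \eqref{eq:compute.HV} and shows that $-i\boldsymbol{\sigma}\cdot\nabla$ preserves each partial wave and acts on $(u_k^+,u_k^-)$ exactly as the one-dimensional expression $\slashed{\mathfrak{d}_k}$ of \eqref{eq:def-slashed-dk}.

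It then remains to prove the domain identity \eqref{eq:equivalence.psi.radial}. The infinite mass conditions on the edges $\{\theta=0\}$ and $\{\theta=\omega\}$ are already encoded in the basis, since $f_k^\pm\in\D(K_\omega)$; they therefore hold automatically and impose no constraint on the $u_k$. The requirement that the zero-extension of $\psi$ lie in $\D(D_V)$, hence in $H^1(\Omega_V;\C^2)$, forces the trace of $\psi$ to vanish on the arc $\{r=\rho\}$, that is $u_k(\rho)=0$. For the regularity at the vertex I would decompose the Dirichlet seminorm: because $K_\omega$ is diagonalized by the $f_k^\pm$ and $\partial_\theta$ is expressed through $K_\omega$, the cross terms vanish and $\|\nabla\psi\|_{L^2(\Omega_\rho)}^2$ becomes a sum over $k$ of radial quadratic forms that control both $\int_0^\rho|(u_k^\pm)'|^2\,dr$ and the Hardy-type integral $\int_0^\rho|u_k^\pm|^2/r^2\,dr$. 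Finiteness of each, together with the one-dimensional trace at $r=0$, is then equivalent to $u_k\in H^1_0((0,\rho);\C^2)$, and Hardy's inequality conversely guarantees $\tau_k u_k/r\in L^2$ so that $\slashed{\mathfrak{d}_k}u_k\in L^2$ holds automatically on this domain.

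The delicate point, and the main obstacle, is precisely this corner analysis. For a convex opening one has $\tau_k\ge\tfrac12$ for all $k$, the coefficient $\tau_k/r$ is in the limit-point case at $r=0$, and $H^1(\Omega_\rho)$-regularity is then equivalent to mere finiteness of the graph norm of $\slashed{\mathfrak{d}_k}$. For a concave opening, however, the lowest mode has $\tau_0=\tfrac{\pi}{2\omega}\in(\tfrac14,\tfrac12)$, which is limit-circle, and there the maximal domain of $\slashed{\mathfrak{d}_0}$ strictly contains $H^1_0$: for instance $u_0=(r^{\tau_0}\phi(r/\rho),0)$ satisfies $\slashed{\mathfrak{d}_0}u_0\in L^2$ while $r^{\tau_0}\phi(r/\rho)\notin H^1((0,\rho))$, consistently with $\varphi_j^+\notin H^1(\Omega_\rho;\C^2)$. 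One therefore cannot identify $\D(d_k)$ with the maximal domain; it is the extra $H^1(\Omega_\rho;\C^2)$ requirement in \eqref{eq:def-D-rho-j} that selects the smaller space $H^1_0$, and ruling out the slowly decaying limit-circle solution is where I would rely on the estimates of \cite{LeTeOurBon18,PizzVDB21}.
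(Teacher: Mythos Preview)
Your derivation of \eqref{eq:compute.HV} by direct substitution of the polar form of $-i\boldsymbol\sigma\cdot\nabla$ is correct and more explicit than the paper, which simply cites \cite{CGP22} for this identity. For the domain identity \eqref{eq:equivalence.psi.radial}, however, the paper takes a different and shorter route: rather than decomposing $\|\nabla\psi\|^2$, it introduces the subspace $\D_\rho$ of functions in $\D(D_\rho)$ that are smooth and supported away from the vertex, observes that for such $\psi$ one has $u_k\in C_c^\infty((0,\rho);\C^2)$, and then takes closures on both sides. On the two-dimensional side, \Cref{prop:sigma.nabla=nabla} makes the graph norm of $D_\rho$ equivalent to the $H^1$ norm, so $\overline{\D_\rho}^{D_\rho}=\D(D_\rho)$; on the radial side, $\overline{C_c^\infty((0,\rho);\C^2)}^{d_k}=H^1_0((0,\rho);\C^2)$ by \cite[Lemma~A.1]{FL23} together with $\tau_k\neq\tfrac12$. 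No Hardy analysis at the vertex is needed.

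Your direct route also works, but the obstacle you describe in the last paragraph is milder than you suggest, and the appeal to \cite{LeTeOurBon18,PizzVDB21} is unnecessary. The difficulty with the combined radial form $\int_0^\rho\bigl|u'-\tfrac{u}{2r}\bigr|^2+(\tau_k^2-\tau_k)\int_0^\rho|u|^2/r^2$ when $\tau_k\in(0,1)$ disappears if you keep the radial and angular pieces of $\|\nabla\psi\|_{L^2(\Omega_\rho)}^2$ separate: since $\partial_\theta f_k^\pm=i(\pm\tau_k-\tfrac12)\sigma_3 f_k^\pm$, finiteness of $\|\tfrac{1}{r}\partial_\theta\psi\|_{L^2(\Omega_\rho)}$ alone already yields $(\tau_k\mp\tfrac12)^2\int_0^\rho|u_k^\pm|^2/r^2\,dr<\infty$, and this coefficient is nonzero precisely because $\tau_k\neq\tfrac12$. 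The limit-circle dichotomy for $\slashed{\mathfrak d}_0$ is therefore irrelevant here: you are characterising $\D(D_\rho)$ through the $H^1(\Omega_\rho)$ requirement in \eqref{eq:def-D-rho-j}, not through the maximal domain of the one-dimensional operator. The paper's closure argument buys brevity; your approach, once streamlined this way, makes the role of the condition $\tau_k\neq\tfrac12$ more visible.
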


\begin{proof}
\eqref{eq:compute.HV} has been already proved in \cite{CGP22}, thus we only need to prove the equivalence in \eqref{eq:equivalence.psi.radial}. 

Thanks to \Cref{prop:sigma.nabla=nabla}, one and since $f_k^\pm \in C_c^{\infty}([0,\omega];\C^2)$, if $u_k\in H^1_0\left((0,\rho);\C^2\right)$ for any $k\in \mathbb{N}$, then $\psi\in \D(D_\rho)$.

Let us prove the other equivalence.
Set $O_\rho:=\left(\overline{\Omega_\rho}\cap B_\rho\right)\setminus\{0\}$ and set
\[
\D_\rho:=
\left\{\psi\in C_c^{\infty}(O_\rho;\C^2):
\ -i \sigma_3  \boldsymbol{\sigma} \cdot \textbf{n}  u= u\ \text{on}\ \partial O_\rho\right\}.
\] 
If $\psi\in \D_\rho$, using \eqref{eq:equivalence.psi.radial} and since $f_k^\pm \in C_c^{\infty}([0,\omega];\C^2)$, we get $u_k\in C^\infty_c\left((0,\rho);\C^2\right)$.
We claim that  
\begin{equation}\label{eq:density}
\overline{\D_\rho}^{D_\rho}=\mathcal{D}(D_\rho)
\quad\text{and}\quad
\overline{C^\infty_c\left((0,\rho);\C^2\right)}^{d_k}=H^1_0\left((0,\rho);\C^2\right).
\end{equation}
This combined with \Cref{prop:decomposition.partialwave} and with \eqref{eq:compute.HV} concludes the proof.
About the first equality of \eqref{eq:density}, from \eqref{eq:sigma.nabla=nabla} we deduce that the $D_\rho$ norm is equivalent to the $H^1$-norm on $\D_\rho$.
The second equality of \eqref{eq:density} descends from \cite[Lemma A.1]{FL23} and the fact that
$\tau_k\neq 1/2$ for any $k\in\mathbb{N}$.
\end{proof}
We are now ready to prove \eqref{eq:estimate-bessel-zero}.
\begin{lemma}\label{lemmasigmav}
Let $\omega \in (0,2\pi)$, $\Omega_\rho$ be defined as in \eqref{eq:def-D-rho-j} and $D_\rho$ defined as in \eqref{eq:def-D-rho-j}. Then,  for any $\psi\in\mathcal{D}(D_{\rho})$, 
\[
\norm{\s \psi}_{L^2(\Omega_{\rho};\C^2)}\geq \frac{2}{\rho}\norm{\psi}_{L^2(\Omega_{\rho};\C^2)}.
\]
\end{lemma}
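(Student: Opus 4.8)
The plan is to reduce the two-dimensional estimate to a family of one-dimensional inequalities on the half-line through the polar decomposition of \Cref{prop:UnitaryTransformation}, and then to settle each of these by a ground-state (Bessel) substitution. First I would invoke \Cref{prop:decomposition.partialwave} and \Cref{prop:UnitaryTransformation}: for $\psi\in\mathcal{D}(D_\rho)$ write $\psi=\frac{1}{\sqrt r}\sum_k\big[u_k^+f_k^++u_k^-f_k^-\big]$ with $u_k=(u_k^+,u_k^-)\in H^1_0((0,\rho);\C^2)$ for every $k$. Since $m=0$ we have $\norm{\s\psi}=\norm{D_\rho\psi}$, and because $\{f_k^\pm\}_k$ is orthonormal, \eqref{eq:compute.HV} gives
\[
\norm{\s\psi}_{L^2(\Omega_\rho;\C^2)}^2=\sum_{k}\left(\norm{(\partial_r-\tfrac{\tau_k}{r})u_k^+}_{L^2(0,\rho)}^2+\norm{(-\partial_r-\tfrac{\tau_k}{r})u_k^-}_{L^2(0,\rho)}^2\right),
\]
while $\norm{\psi}^2=\sum_k(\norm{u_k^+}^2+\norm{u_k^-}^2)$. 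It therefore suffices to prove, for every $\tau=\tau_k=\frac{(2k+1)\pi}{2\omega}>\tfrac14$ and every $u\in H^1_0((0,\rho))$, the two scalar estimates $\norm{(\partial_r-\tfrac{\tau}{r})u}^2\geq\tfrac{4}{\rho^2}\norm{u}^2$ and $\norm{(-\partial_r-\tfrac{\tau}{r})u}^2\geq\tfrac{4}{\rho^2}\norm{u}^2$, and then sum over $k$.

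By density of $C_c^\infty((0,\rho))$ in $H^1_0((0,\rho))$ it is enough to argue for $u\in C_c^\infty((0,\rho))$, where an integration by parts (the boundary terms vanish) yields
\[
\norm{(\partial_r-\tfrac{\tau}{r})u}_{L^2(0,\rho)}^2=\int_0^\rho|u'|^2+\frac{\tau(\tau-1)}{r^2}|u|^2\,dr,\qquad \norm{(-\partial_r-\tfrac{\tau}{r})u}_{L^2(0,\rho)}^2=\int_0^\rho|u'|^2+\frac{\tau(\tau+1)}{r^2}|u|^2\,dr.
\]
Since $\tau>0$, the second quantity dominates the first, so it suffices to treat $\partial_r-\tfrac{\tau}{r}$, that is, to show $\int_0^\rho|u'|^2+\frac{\tau(\tau-1)}{r^2}|u|^2\,dr\geq\frac{4}{\rho^2}\int_0^\rho|u|^2\,dr$. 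Observe that $\tau(\tau-1)=(\tau-\tfrac12)^2-\tfrac14\geq-\tfrac14$, so this is a critical Hardy-type form.

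The crux is a ground-state substitution. I would set $\nu:=|\tau-\tfrac12|\geq0$, so that $\tau(\tau-1)=\nu^2-\tfrac14$, and let $g(r):=\sqrt r\,J_\nu(\tfrac{2r}{\rho})$, which is smooth on $(0,\rho)$ and solves $-g''+\frac{\tau(\tau-1)}{r^2}g=\frac{4}{\rho^2}g$. The essential point is that $g>0$ on all of $(0,\rho)$: the argument $\tfrac{2r}{\rho}$ ranges in $(0,2)$, and the first positive zero of $J_\nu$ satisfies $j_{\nu,1}\geq j_{0,1}=2.404\ldots>2$ for every $\nu\geq0$, by monotonicity of $j_{\nu,1}$ in $\nu$. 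Writing $u=gv$ with $v=u/g\in C_c^\infty((0,\rho))$ and integrating by parts gives the ground-state identity
\[
\int_0^\rho|u'|^2+\frac{\tau(\tau-1)}{r^2}|u|^2\,dr=\int_0^\rho g^2|v'|^2\,dr+\frac{4}{\rho^2}\int_0^\rho|u|^2\,dr\geq\frac{4}{\rho^2}\int_0^\rho|u|^2\,dr,
\]
which is exactly the desired bound. The main obstacle is precisely the strict Bessel inequality $j_{0,1}>2$: it is what produces the sharp constant $2/\rho$ and simultaneously guarantees the positivity of $g$ on the whole interval. Everything else — the polar reduction, the integration by parts, and the vanishing of the boundary terms — is routine once one works with $C_c^\infty$ test functions, which keeps $v=u/g$ compactly supported away from the singular endpoint $r=0$.
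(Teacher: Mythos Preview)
Your argument is correct and follows the same overall route as the paper: both proofs use the polar decomposition of \Cref{prop:UnitaryTransformation} to reduce to the one-dimensional inequality $\int_0^\rho |u'|^2+\frac{\tau(\tau-1)}{r^2}|u|^2\,dr\geq \frac{4}{\rho^2}\int_0^\rho|u|^2\,dr$ for $u\in H^1_0((0,\rho))$, and both ultimately rest on the Bessel fact $j_{0,1}>2$. The only difference is in the last step: the paper computes the exact bottom of the spectrum of the associated Sturm--Liouville operator as $(j_{|\tau-1/2|,1})^2/\rho^2$ via the general solution $\sqrt{r}\,J_{|\tau-1/2|}(\sqrt{E}\,r)$ together with the Dirichlet condition at $r=\rho$, and then invokes $j_{\nu,1}^2\geq j_{0,1}^2\approx 5.78>4$; you instead bypass the eigenvalue computation by a ground-state substitution with $g(r)=\sqrt{r}\,J_{|\tau-1/2|}(2r/\rho)$, whose positivity on $(0,\rho)$ is guaranteed by the very same inequality $j_{\nu,1}\geq j_{0,1}>2$. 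Your variant is marginally more self-contained (it needs only positivity of one explicit solution rather than the full spectral picture), but the two arguments are essentially equivalent.
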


\begin{proof}
Let $\psi \in\D(D_\rho)$, such that $\norm{\psi}_{L^2(\Omega_\rho;\C^2)}=1$. By \Cref{prop:UnitaryTransformation} we have that there exists $\{u_k=(u_k^+,u_k^-)\}_{k\in\mathbb{N}}\subset H^1_0\left((0,\rho);\C^2\right)$ such that
\begin{equation}
\label{eq:dec-in-radial}
\norm{D_{\rho}\psi}_{L^2(\Omega_\rho;\C^2)}^2=
\sum_{k=0}^{\infty}
\int_{0}^{\rho}\left|\left(\partial_r-\frac{\tau_k}{r}\right)u_k^+(r)\right|^2 dr
+
\int_{0}^{\rho}\left|\left(\partial_r+\frac{\tau_k}{r}\right)u_k^-(r)\right|^2 dr
\end{equation}
Thanks to this, we can reduce the proof to the solution of an eigenvalue equation.
Indeed, for any $\tau\in\R\setminus \{1/2\}$ define the following quadratic form
\[
\operatorname{dom}(q_\tau):=H^1_0\left((0,\rho);\C\right)\quad\text{and}\quad
q_\tau[u]:=\int_0^\rho \left|\left(\partial_r-\frac{\tau}{r}\right)u(r)\right|^2 dr.
\]
By construction $q_\tau\geq 0$. Moreover, being $\tau\neq 1/2$, reasoning as in \cite[Lemma A.1]{FL23} one can prove that $q_\tau$ is closed. By the first representation theorem there exists a unique self-adjoint operator $T_\tau$ such that $\D(T_\tau)\subset \operatorname{dom}(q_\tau)=H^1_0\left((0,\rho);\C\right)$ and $\left\langle T_\tau u,u\right\rangle_{L^2((0,\rho);\C)}=q_\tau[u]$ for any $u\in\D(T_\tau)$. 
For these reasons, we get
\[
T_\tau u(r)=-\partial_r^2u(r)+\frac{\tau^2-\tau}{r^2}u(r).
\]
Being $T_\tau$ a positive operator, and denoting by $E_\tau$ its smallest eigenvalue, by the min-max theorem we get
\[
E_\tau = \inf_{ u\in \D(T_\tau)\setminus \{ 0 \} }
\frac{ \langle T_\tau u, u\rangle_{L^2((0,\rho);\C)}}{\norm{u}_{L^2((0,\rho);\C)}^2}=
\inf_{ u\in \D(T_\tau)\setminus \{ 0 \} }
\frac{q_\tau[u]}{\norm{u}_{L^2((0,\rho);\C)}^2}.
\]
Thanks to this and by \eqref{eq:dec-in-radial} we have that
\begin{equation}\label{eq:estimate-eingevalue}
\norm{D_\rho\psi}_{L^2(\Omega_\rho;\C^2)}^2
\geq
\inf_{k\in \mathbb{N}} E_{\pm \tau_k}^2.
\end{equation}
For $\tau\in\R\setminus\{1/2\}$ and for $E\geq 0$, the general solution of the equation $T_{\tau}u=E u$ is 
\[
u(r):=c_1 \sqrt{r} J_{|\tau-1/2|}\left(\sqrt{E} \, r\right)+
c_2 \sqrt{r}Y_{|\tau-1/2|}\left(\sqrt{E}\,  r\right),
\]
where $c_1,c_2\in \C$ and  $J$ and $Y$ are the Bessel functions of first kind. 
   To get $u\in H^1_0((0,\rho);\C^2)$, using the asymptotic expansion for $r\to 0$,
   \[
   J_{|\tau-1/2|}\left(\sqrt{E}\, r\right)\sim(\sqrt{E}\, r)^{|\tau-1/2|}\quad\text{and}\quad
   Y_{|\tau-1/2|}\left(\sqrt{E}\, r\right)\sim(\sqrt{E}\, r)^{-|\tau-1/2|},
   \]
we deduce that $c_2=0$. From the same expansion we deduce $E>0$, otherwise $u=0$. 
Lastly, ensuring $u(\rho)=0$ is necessary to achieve $H^1_0$-regularity.
This means that $J_{|\tau-1/2|}\left(\sqrt{E}\, \rho\right)=0$, and therefore 
\[
E_\tau= (j_{|\tau-1/2|,1})^2/\rho^2,
\]
being $j_{\nu,1}$ the first positive zero of the Bessel function of first kind $J_{\nu}$. 
Thanks to this and to \eqref{eq:estimate-eingevalue}, using  the inequality ${j_{k,1}}^2\geq {j_{0,1}}^2+k^2 $ proved in \cite{MCANN77} and the fact that ${j_{0,1}}^2\approx 5.78$, the proof is concluded.
\end{proof}


\end{document}